\numberwithin{equation}{section}
\newtheorem{theorem}{Theorem}[section]
\newtheorem{lemma}[theorem]{Lemma}
\newtheorem{prop}[theorem]{Proposition}
\newtheorem{problem}[theorem]{Problem}
\date{\today}
\title{\bf\Large Maximal intersecting families revisited\footnote{This paper was published on Discrete Mathematics 349 (2026) 114654. This is the final version. Following reviewer's
suggestion, we have changed the original title ``Stabilities of intersecting families revisited''.  E-mail addresses: \url{wuyjmath@163.com} (Y. Wu), \url{ytli0921@hnu.edu.cn} (Y. Li), \url{fenglh@163.com} (L. Feng), \url{jiuqiang68@126.com} (J. Liu), \url{yuguihai@126.com} (G. Yu).}}
\author{
{\small  Yongjiang Wu$^a$, Yongtao Li$^a$, Lihua Feng$^a$, Jiuqiang Liu$^b$, Guihai Yu$^{c,}$\footnote{Corresponding author}}\\[2mm]
\small $^a$School of Mathematics and Statistics, HNP-LAMA, Central South University\\
 \small Changsha, Hunan, 410083, China\\
\small $^b$Department of Mathematics, Eastern Michigan University\\
 \small  Ypsilanti, MI, 48197, USA \\ 
  \small $^c$College of Big Data Statistics, Guizhou University of Finance and Economics\\
 \small  Guiyang, Guizhou, 550025, China }
\begin{document}

\maketitle

\begin{abstract} 
The well-known Erd\H{o}s--Ko--Rado theorem 
states that for $n> 2k$, every intersecting family of $k$-sets of $[n]:=\{1,\ldots ,n\}$ has at most $ {n-1 \choose k-1}$ sets, and the extremal family consists of all $k$-sets containing a fixed element (called a full star). 
The Hilton--Milner theorem provides a stability result by determining the maximum size of a uniform intersecting family that is not a subfamily of a full star. 
  Further stability results were studied by Han and Kohayakawa (2017) and Huang and Peng (2024).  
Two families $\mathcal{F}$ and $\mathcal{G}$ are called cross-intersecting if for every $F\in \mathcal{F}$ and $G\in \mathcal{G}$, the intersection $F\cap G$ is non-empty.  
Let $k \geq 1, t\ge 0$ and $n \geq 2 k+t$ be integers.  Frankl (2016) proved that if $\mathcal{F} \subseteq\binom{[n]}{k+t}$ and $\mathcal{G} \subseteq\binom{[n]}{k}$ are cross-intersecting families, and $\mathcal{F}$ is non-empty and $(t+1)$-intersecting, then
$|\mathcal{F}|+|\mathcal{G}| \leq\binom{n}{k}-\binom{n-k-t}{k}+1$. Recently, Wu (2023) sharpened Frankl's result by establishing a stability variant.  
The aim of this paper is two-fold. 
Inspired by the above results, 
we first prove a further stability variant that generalizes both Frankl's result and Wu's result. 
Secondly, as an interesting application, 
we illustrate that the aforementioned results on cross-intersecting families could be used to establish the stability results of the Erd\H{o}s--Ko--Rado theorem. More precisely, we present new short proofs of the Hilton--Milner theorem, the Han--Kohayakawa theorem and the Huang--Peng theorem. 
Our arguments are more straightforward, and it may be of independent interest. 
\end{abstract}

 {\bf AMS Classification}:  05C65; 05D05 
 
 {\bf Key words}:  Erd\H{o}s-Ko-Rado theorem; Cross-intersecting; Stability

\section{Introduction}

For any two integers $a\leq b$, let  $[a, b]=\{a, a+1, \ldots, b\}$ and  let $[n]=[1,n]$. Let $2^{[n]}$ denote the power set of $[n]$. For any $0 \leq k \leq n$, let $\binom{[n]}{k}$ denote the collection of all its $k$-element subsets. A family $\mathcal{F} \subseteq 2^{[n]}$ is called $k$-\textit{uniform} if $\mathcal{F} \subseteq \binom{[n]}{k}$. A family $\mathcal{F} \subseteq 2^{[n]}$ is called $t$-\textit{intersecting} if $|F\cap F^{\prime}|\geq t$
 for all $F, F^{\prime}\in \mathcal{F}$. If $t=1$, $\mathcal{F}$ is simply called \textit{intersecting}. Two families  $\mathcal{F}, \mathcal{G}\subseteq 2^{[n]}$ are  said to be \textit{isomorphic} if there exists a permutation $\sigma$ on $[n]$ such that $\mathcal{G}=\left\{\{\sigma(x):x\in F\}: F\in \mathcal{F}\right\}$, and we write $\mathcal{F}\cong\mathcal{G}$.

In 1961, Erd\H{o}s, Ko and Rado \cite{E61} determined the maximum size of $k$-uniform
intersecting families by proving that if $k\ge 1$, $n\ge 2k$ and 
 $\mathcal{F} \subseteq\binom{[n]}{k}$ is an intersecting family, then 
\begin{equation*} \label{eq-EKR-bound}
|\mathcal{F}|\leq\binom{n-1}{k-1}.
\end{equation*}
For $n > 2 k$, the equality holds if and only if $\mathcal{F}=\big\{F\in \binom{[n]}{k}: x\in F\big\}$ for some $x \in [n]$. 
There are many different proofs and methods for proving the Erd\H{o}s--Ko--Rado theorem; see, e.g., the Katona  circle method \cite{Kat1972},  the probabilistic method \cite{AS2016}, the algebraic methods \cite{Fur2006,HZ2017,Lov1979} and other combinatorial methods \cite{Day1974b,FF2012,H18,K64,KZ2018}.  
 For more results on extremal set theory, we refer the interested readers to the  comprehensive surveys \cite{FT16,Ellis2021}.

\subsection{Cross-intersecting families}

Two families  $\mathcal{F},\mathcal{G}\subseteq 2^{[n]}$ are  called \textit{cross-intersecting} if $|F\cap G|\geq 1$
 for any $F\in \mathcal{F}$ and $G\in \mathcal{G}$.

In 1967, Hilton and Milner \cite{H67} gave the following result. 

\begin{theorem}[Hilton--Milner \cite{H67}]  \label{mainH}
Let $k \geq 1$ and $n \geq 2 k$ be positive integers.  Let $\mathcal{F} \subseteq\binom{[n]}{k}$ and $\mathcal{G} \subseteq\binom{[n]}{k}$ be  non-empty cross-intersecting families. Under the condition that $|\mathcal{G}| \geq |\mathcal{F}|$, then
$$
|\mathcal{F}|+|\mathcal{G}| \leq\binom{n}{k}-\binom{n-k}{k}+1.
$$
For $n > 2 k$, the equality holds if and only if $\mathcal{F}=\left\{F_1\right\}$ for some $F_1 \in\binom{[n]}{k}$ and $\mathcal{G}= \big\{G \in\binom{[n]}{k}:$ $ G \cap F_1 \neq \emptyset \big\}$, or one more possibility when $k=2, \mathcal{F}=\mathcal{G}\cong\left\{G \in\binom{[n]}{2}: 1\in G\right\}$.
\end{theorem}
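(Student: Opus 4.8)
The plan is to reduce the statement to a single shadow inequality and then apply the Kruskal--Katona theorem. Since $\mathcal F\neq\emptyset$ and we assume $|\mathcal G|\ge|\mathcal F|$, I would complement the smaller family: set $\overline{\mathcal F}=\{[n]\setminus F:F\in\mathcal F\}\subseteq\binom{[n]}{n-k}$, and let $\Delta_k(\overline{\mathcal F})$ be the family of all $k$-sets contained in some member of $\overline{\mathcal F}$ (equivalently, the $(n-2k)$-fold iterated shadow of $\overline{\mathcal F}$). The cross-intersecting hypothesis says exactly that no $G\in\mathcal G$ is disjoint from any $F\in\mathcal F$, i.e. that no $G$ lies below any $[n]\setminus F$; hence $\mathcal G\cap\Delta_k(\overline{\mathcal F})=\emptyset$ and
\[
|\mathcal G|\le\binom nk-\bigl|\Delta_k(\overline{\mathcal F})\bigr|.
\]
Writing $m=|\mathcal F|$ and $d=|\Delta_k(\overline{\mathcal F})|$, the desired bound $|\mathcal F|+|\mathcal G|\le\binom nk-\binom{n-k}{k}+1$ becomes equivalent to the shadow estimate $d\ge m+\binom{n-k}{k}-1$.

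To establish this estimate I would use the Lov\'asz form of Kruskal--Katona. Writing $m=\binom{y}{n-k}$ for a real $y\ge n-k$ and iterating the shadow bound from level $n-k$ down to level $k$ gives $d\ge\binom yk$. The hypothesis $|\mathcal G|\ge|\mathcal F|$ enters precisely here to bound $y$: from $m\le|\mathcal G|\le\binom nk-d$ and $d\ge\binom yk$ one obtains $\binom{y}{n-k}+\binom yk\le\binom nk$, and since the left side is increasing in $y$ while Pascal's identity gives $\binom{n-1}{k-1}+\binom{n-1}{k}=\binom nk$, this forces $y\le n-1$. It therefore suffices to verify the single-variable inequality
\[
\binom yk-\binom{y}{n-k}\ \ge\ \binom{n-k}{k}-1\qquad(n-k\le y\le n-1).
\]

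The inequality holds with equality at the left endpoint $y=n-k$ by direct evaluation, and at the right endpoint $y=n-1$ it reads $\binom{n-1}{k}-\binom{n-1}{k-1}\ge\binom{n-k}{k}-1$, which is an equality when $k=2$ and strict when $k\ge3$. Granting that $y\mapsto\binom yk-\binom{y}{n-k}$ is unimodal (increasing then decreasing) on $[n-k,n-1]$, the minimum over the interval is attained at an endpoint, and the two boundary evaluations finish the inequality. This simultaneously explains the exceptional case: equality in the theorem requires equality in all three steps, i.e. $\mathcal G$ consisting of every $k$-set meeting each member of $\mathcal F$, the family $\overline{\mathcal F}$ attaining the Kruskal--Katona minimum, and $y\in\{n-k,n-1\}$. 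The endpoint $y=n-k$ gives $m=1$, i.e. $\mathcal F=\{F_1\}$ and $\mathcal G=\{G:G\cap F_1\neq\emptyset\}$. The endpoint $y=n-1$ gives $m=\binom{n-1}{k-1}$, where $\overline{\mathcal F}$ is (up to symmetry) all $(n-k)$-sets avoiding a fixed element and $\mathcal F=\mathcal G$ is a full star; this yields genuine equality only for $k=2$, while for $k\ge3$ the right endpoint and interior are strict, leaving $m=1$ as the only extremal case.

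The step I expect to be the main obstacle is the unimodality of $\binom yk-\binom{y}{n-k}$ on $[n-k,n-1]$: both endpoint evaluations are easy, but proving that the function has no interior dip below the threshold---equivalently controlling the sign of its derivative, a difference of binomial coefficients evaluated at a non-integer argument---is the technical heart, as is confirming that the iterated Lov\'asz bound is valid for the real parameter $y$. The remaining care lies in the equality analysis, in particular in using uniqueness in Kruskal--Katona to pin the extremal $\overline{\mathcal F}$ down to a star and thereby isolate the single $k=2$ exception recorded in the statement.
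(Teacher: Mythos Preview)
The paper does not give its own proof of Theorem~\ref{mainH}; it is quoted as a classical result from~\cite{H67} and used as a black box throughout. So there is no paper-proof to compare against directly.

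Your strategy via complements and the Kruskal--Katona shadow bound is a standard and essentially correct route, close in spirit to the argument of Frankl and Tokushige~\cite{FT92} (also in the paper's bibliography). The reduction to the single-variable inequality $\binom{y}{k}-\binom{y}{n-k}\ge\binom{n-k}{k}-1$ on $[n-k,n-1]$ is right, and your two endpoint evaluations are accurate.

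The gap you yourself flag is genuine: you have not established the unimodality (or even that the minimum occurs at an endpoint) of $y\mapsto\binom{y}{k}-\binom{y}{n-k}$ for \emph{real} $y$, and without it the Lov\'asz-form argument is incomplete. Two ways to close it. First, and more in the paper's style: invoke Lemma~\ref{Ln} (the Hilton/Kruskal--Katona replacement by lex-initial families) to assume $\mathcal F$ and $\mathcal G$ are initial segments in lex order; then $|\mathcal G|\ge|\mathcal F|$ forces $\mathcal F\subseteq\mathcal G$, so $\mathcal F$ is intersecting, and the $t=0$ case of Theorem~\ref{F16} finishes the bound together with the equality analysis---this is exactly the mechanism the paper uses to prove the parallel Lemma~\ref{W231}. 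Second, staying inside your framework: drop the Lov\'asz real-parameter relaxation and use the full Kruskal--Katona theorem, so that only integer $y$ matters; then the forward difference $f(y+1)-f(y)=\binom{y}{k-1}-\binom{y}{n-k-1}$ is nonnegative for all integers $y\le n-2$ by the symmetry and unimodality of integer binomial coefficients, so $f$ is nondecreasing on the integer range and its minimum is at $y=n-k$. Either route both fills the gap and makes the equality characterisation (including the $k=2$ star exception) straightforward via the known uniqueness in Kruskal--Katona.
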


This result initiated the study of finding the maximum of the sum of sizes of cross-intersecting families. 
There are many generalizations of the Hilton--Milner result on cross-intersecting families; see, e.g.,  \cite{F16,F24,FT92,F98,FW2024-EUJC,GXZ2024,SFQ2022,WZ2013}. 
In particular, Frankl \cite{F16} gave the following inequality, which was applied to establish the stability result of Katona theorem \cite{K64}.

\begin{theorem}[Frankl \cite{F16}] 
\label{F16}
Let $k \geq 1, t\ge 0$ and $n \geq 2 k+t$ be integers.  Let $\mathcal{F} \subseteq\binom{[n]}{k+t}$ and $\mathcal{G} \subseteq\binom{[n]}{k}$ be  cross-intersecting families. If $\mathcal{F}$ is $(t+1)$-intersecting and $|\mathcal{F}| \geq 1$, then
$$
|\mathcal{F}|+|\mathcal{G}| \leq\binom{n}{k}-\binom{n-k-t}{k}+1.
$$
For $n > 2 k+t$, the equality holds if and only if $\mathcal{F}=\left\{F_1\right\}$ for some $F_1\in\binom{[n]}{k+t}$ and $\mathcal{G}=\left\{G \in\binom{[n]}{k}: G \cap F_1 \neq \emptyset \right\}$, or  one more possibility when $k=2, \mathcal{F}=\{[t+1] \cup\{i\}: i \in\{t+2, t+3, \ldots, n\}\}$ and $\mathcal{G}=\left\{G \in\binom{[n]}{k}: G \cap[t+1] \neq \emptyset\right\}$ under isomorphism.
\end{theorem}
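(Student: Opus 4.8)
The plan is to eliminate $\mathcal{G}$ first and reduce the whole statement to a counting inequality about the single family $\mathcal{F}$. Since $\mathcal{F}$ and $\mathcal{G}$ are cross-intersecting, every $G \in \mathcal{G}$ meets every member of $\mathcal{F}$, so $\mathcal{G}$ is contained in the ``transversal'' family $T(\mathcal{F}) := \{G \in \binom{[n]}{k} : G \cap F \neq \emptyset \text{ for all } F \in \mathcal{F}\}$. Writing $\mathcal{D}(\mathcal{F}) := \binom{[n]}{k} \setminus T(\mathcal{F})$ for the $k$-sets disjoint from at least one member of $\mathcal{F}$, we have $|\mathcal{G}| \le |T(\mathcal{F})| = \binom{n}{k} - |\mathcal{D}(\mathcal{F})|$, so $|\mathcal{F}| + |\mathcal{G}| \le \binom{n}{k} + |\mathcal{F}| - |\mathcal{D}(\mathcal{F})|$. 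Thus the asserted inequality is equivalent to the following statement that no longer mentions $\mathcal{G}$:
\[
|\mathcal{D}(\mathcal{F})| \ge \binom{n-k-t}{k} + |\mathcal{F}| - 1 .
\]
Here $\binom{n-k-t}{k}$ is exactly the number of $k$-sets disjoint from one fixed $(k+t)$-set, i.e. the contribution of a single member; the content is that each extra member of a $(t+1)$-intersecting $\mathcal{F}$ forces at least one new disjoint $k$-set. (A naive ``remove one member, gain one disjoint set'' induction is false — the gain can be zero at a bad step — so the counting must be done globally.)

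To prove this reduced inequality I would first compress. Applying the usual $(i,j)$-shifts to $\mathcal{F}$ preserves $|\mathcal{F}|$ and the $(t+1)$-intersecting property, and, since under the complementation $G \mapsto [n]\setminus G$ the family $\mathcal{D}(\mathcal{F})$ becomes the iterated upper shadow $\{H \in \binom{[n]}{n-k} : F \subseteq H \text{ for some } F \in \mathcal{F}\}$, shifting does not increase $|\mathcal{D}(\mathcal{F})|$. As the right-hand side above is shift-invariant, it suffices to treat shifted $\mathcal{F}$, for which the minimal $(k+t)$-set $[k+t]$ belongs to $\mathcal{F}$; the $\binom{n-k-t}{k}$ sets $G \subseteq [k+t+1,n]$ then account for the constant term. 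It remains to inject $\mathcal{F} \setminus \{[k+t]\}$ into the $k$-sets that meet $[k+t]$ yet are disjoint from some member. The natural attempt sends $F$ to a set $([k+t]\setminus F) \cup C_F$, padded by some $C_F \subseteq [k+t+1,n]\setminus F$ to size $k$ (feasible precisely because $n \ge 2k+t$); such a set meets $[k+t]$ since $F \neq [k+t]$ and is disjoint from $F$.

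The main obstacle is injectivity: each image is forced to be disjoint from $F$, so it cannot encode $F$, and two members with the same trace on $[k+t]$ can collide. This is exactly where the shiftedness of $\mathcal{F}$ must be exploited, choosing $C_F$ canonically (say as an initial or terminal segment of $[k+t+1,n]\setminus F$) and verifying one-to-one-ness from the compressed structure; I expect this to be the technical heart. Finally, the equality analysis for $n > 2k+t$ is the most delicate point: equality in $|\mathcal{G}| \le |T(\mathcal{F})|$ forces $\mathcal{G} = T(\mathcal{F})$, and equality in the displayed lemma forces the above map to be a bijection onto all $k$-sets meeting $[k+t]$ and lying in $\mathcal{D}(\mathcal{F})$, which rigidly constrains $\mathcal{F}$ to a single set (together with its full transversal family), except for the sporadic $k=2$ family $\{[t+1]\cup\{i\} : i \in [t+2,n]\}$. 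Because compression can destroy the isomorphism type of the extremal configuration, I would recover the characterization by analyzing the tight cases directly rather than through the shifted model.
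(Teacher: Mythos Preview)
The paper does not itself prove this theorem; it is quoted from Frankl and used as an ingredient in the proof of Theorem~\ref{51}. Frankl's method---mirrored by the paper in its own main proof---is to shift both $\mathcal{F}$ and $\mathcal{G}$ simultaneously and induct on $n$, writing $|\mathcal{F}|+|\mathcal{G}|=\bigl(|\mathcal{F}(\bar n)|+|\mathcal{G}(\bar n)|\bigr)+\bigl(|\mathcal{F}(n)|+|\mathcal{G}(n)|\bigr)$ and bounding the two brackets separately. Your plan is genuinely different: you eliminate $\mathcal{G}$ at the outset and reduce everything to the single-family inequality $|\mathcal{D}(\mathcal{F})|\ge\binom{n-k-t}{k}+|\mathcal{F}|-1$.

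That reduction is correct, and the compression step is fine. The gap is the injection. Your map $F\mapsto([k+t]\setminus F)\cup C_F$ with $C_F$ an initial (or terminal) segment of $[k+t+1,n]\setminus F$ is \emph{not} injective even on shifted families. For the initial-segment rule take $k=3$, $t=1$, $n\ge 8$: both $F_1=\{1,2,3,7\}$ and $F_2=\{1,2,3,8\}$ (present in any shifted $2$-intersecting family containing $\{1,2,3,8\}$) have $[4]\setminus F_i=\{4\}$ and initial $C_{F_i}=\{5,6\}$, so both map to $\{4,5,6\}$. For the terminal-segment rule take $n=8$, $F_1=\{1,2,3,5\}$, $F_2=\{1,2,3,6\}$: both map to $\{4,7,8\}$. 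The obstruction is structural---members with the same trace on $[k+t]$ differ only in their part outside $[k+t]$, yet $C_F$ is forced to avoid precisely that part, so the image cannot record which $F$ produced it. I do not see a canonical choice of $C_F$ that repairs this; the standard proofs of the reduced inequality proceed by induction on $n$ (as Frankl does) or through Katona's intersecting shadow theorem, not by a direct injection. As written, your proposal halts at the step that carries the content of the theorem.
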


In the case $t=1$, Bulavka and Woodroofe \cite{BW2024} told us a variant by replacing the $2$-intersecting with a condition on the shadow of $\mathcal{F}$. In fact, their result can be deduced from  Theorem \ref{F16}.  
Recently, the second author and Wu \cite{L24} sharpened   Theorem \ref{F16} in the case  $|\mathcal{F}|\ge 2$ and $t=1$ and used this to establish stability results for non-uniform $t$-intersecting families.
Subsequently, Wu \cite{W23} showed the following more general extension under the constraint $|\mathcal{F}|\ge 2$.

\begin{theorem}[Wu \cite{W23}] \label{W23}
Let $k \geq 3, t\ge 0$ and $n \geq 2 k+t$ be integers. Let $\mathcal{F} \subseteq\binom{[n]}{k+t}$ and $\mathcal{G} \subseteq\binom{[n]}{k}$ be cross-intersecting families. If $\mathcal{F}$ is $(t+1)$-intersecting and $|\mathcal{F}| \geq 2$, then
$$
|\mathcal{F}|+|\mathcal{G}| \leq\binom{n}{k}-\binom{n-k-t}{k}-\binom{n-k-t-1}{k-1}+2.
$$
For $n > 2 k+t$, the equality holds if and only if $\mathcal{F}=\left\{F_1, F_2\right\}$ for some $F_1, F_2 \in\binom{[n]}{k+t}$ with $\left|F_1 \cap F_2\right|=k+t-1$ and $\mathcal{G}=\left\{G \in\binom{[n]}{k}: G \cap F_1 \neq \emptyset \text{ and } G \cap F_2 \neq \emptyset\right\}$, or two more possibilities when $k=3$, namely, $\mathcal{F}=\{[t+2] \cup\{i\}: i \in\{t+3, t+4, \ldots, n\}\}\text{ and } \mathcal{G}=\left\{G \in\binom{[n]}{3}: G \cap[t+2] \neq \emptyset\right\}$, or $\mathcal{F}=\left\{F\in\binom{[n]}{t+3}: [t+1] \subseteq F\right\}\text{ and }  \mathcal{G}=\left\{G \in\binom{[n]}{3}: G \cap[t+1] \neq \emptyset\right\}$ under isomorphism.
\end{theorem}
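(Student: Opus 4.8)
The plan is to fix $\mathcal{F}$, replace $\mathcal{G}$ by the largest admissible partner, translate the bound into a statement about the single family $\mathcal{F}$, and then establish that statement by shifting followed by induction. First I would set $\mathcal{I}(\mathcal{F})=\{G\in\binom{[n]}{k}:G\cap F\neq\emptyset\text{ for every }F\in\mathcal{F}\}$, the maximal family cross-intersecting with $\mathcal{F}$. Every admissible $\mathcal{G}$ satisfies $\mathcal{G}\subseteq\mathcal{I}(\mathcal{F})$, so it suffices to bound $|\mathcal{F}|+|\mathcal{I}(\mathcal{F})|$. Letting $N(\mathcal{F})$ denote the number of $G\in\binom{[n]}{k}$ disjoint from at least one member of $\mathcal{F}$, we have $|\mathcal{I}(\mathcal{F})|=\binom{n}{k}-N(\mathcal{F})$, and the theorem becomes equivalent to
\[
N(\mathcal{F})\ \geq\ |\mathcal{F}|+\binom{n-k-t}{k}+\binom{n-k-t-1}{k-1}-2.
\]
Applying the usual $(i,j)$-shifts simultaneously to $\mathcal{F}$ and to $\mathcal{I}(\mathcal{F})$ preserves both cardinalities, the cross-intersecting property and the $(t+1)$-intersecting property of $\mathcal{F}$, so I may assume that $\mathcal{F}$ is shifted. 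A shifted $(t+1)$-intersecting family with at least two members necessarily contains the two canonical sets $F_1=[k+t]$ and $F_2=[k+t-1]\cup\{k+t+1\}$, for which $|F_1\cap F_2|=k+t-1$.

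Write $C=\binom{n-k-t}{k}+\binom{n-k-t-1}{k-1}$. By inclusion--exclusion $N(\{F_1,F_2\})=2\binom{n-k-t}{k}-\binom{n-|F_1\cup F_2|}{k}$, and $|F_1\cup F_2|=k+t+1$ together with Pascal's rule gives $N(\{F_1,F_2\})=C$; for an arbitrary pair the $(t+1)$-intersecting hypothesis forces $|F_1\cap F_2|\le k+t-1$, so $N(\{F_1,F_2\})\ge C$ with equality precisely when $|F_1\cap F_2|=k+t-1$. This disposes of the case $|\mathcal{F}|=2$ and identifies the main extremal pair. For larger families put $\mathcal{F}'=\mathcal{F}\setminus\{F_1,F_2\}$ and
\[
X=\{G\in\binom{[n]}{k}:G\cap F_1\neq\emptyset,\ G\cap F_2\neq\emptyset,\ G\cap F=\emptyset\text{ for some }F\in\mathcal{F}'\}.
\]
Splitting the sets counted by $N(\mathcal{F})$ according to whether they meet both $F_1$ and $F_2$ yields the exact identity $N(\mathcal{F})=C+|X|$, so the whole theorem reduces to the transparent inequality $|X|\ge|\mathcal{F}'|=|\mathcal{F}|-2$. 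That each $F\in\mathcal{F}'$ contributes at least one member to $X$ is immediate: pick $a\in F_1\setminus F$ and $b\in F_2\setminus F$ and extend $\{a,b\}$ to a $k$-set inside $[n]\setminus F$, which is possible because $n>2k+t$; the real content is to count these representatives with multiplicity at most one.

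To prove $|X|\ge|\mathcal{F}|-2$ I would induct on $n$, decomposing the shifted family at its top element: $\mathcal{A}=\{F\in\mathcal{F}:n\notin F\}\subseteq\binom{[n-1]}{k+t}$ and $\mathcal{B}=\{F\setminus\{n\}:n\in F\in\mathcal{F}\}\subseteq\binom{[n-1]}{k+t-1}$, with a shadow-type containment between $\mathcal{A}$ and $\mathcal{B}$ coming from shiftedness. Sorting the $k$-sets by whether they contain $n$ expresses $N(\mathcal{F})$ through two smaller instances on $[n-1]$. The point to watch is that in these instances the intersection threshold drops by one (the part in which $n$ belongs to every set is only $t$-intersecting), so the induction cannot remain within the single family of $(t+1)$-intersecting problems; it must run over the ladder of cross-intersecting inequalities obtained by decreasing $k$, $t$ and the intersection threshold together, with Frankl's Theorem \ref{F16} anchoring the pieces in which only one set survives. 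Reassembling the two sub-bounds and collapsing the binomial coefficients by Pascal's rule should return the constant $C$ exactly.

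The equality analysis is then obtained by tracking which estimates are tight along the induction and reversing the shifts. For $k\ge4$ only the main pair $\{F_1,F_2\}$ with $|F_1\cap F_2|=k+t-1$ remains, whereas for $k=3$ two further shifted families saturate the bound, namely the star $\{[t+2]\cup\{i\}:t+3\le i\le n\}$ with core $[t+2]$ and the star $\{F:[t+1]\subseteq F\}$ with core $[t+1]$; unshifting these yields the two extra extremal families in the statement. I expect the main obstacle to be twofold: controlling the coupling of the two sub-problems in the inductive step --- the shared size-$k$ partner is constrained by both $\mathcal{A}$ and $\mathcal{B}$, and the drop of the intersection threshold forces the ladder of auxiliary statements to carry exactly matching constants --- and the equality characterisation, where one must check that the alternative configurations fall strictly below the bound as soon as $k\ge4$ and recover precisely the listed families after reverse-shifting.
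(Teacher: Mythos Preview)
The paper does not itself prove this theorem; it is quoted from Wu~\cite{W23}. However, the paper's proof of the $|\mathcal{F}|\ge3$ analogue (Theorem~\ref{51}) follows a template that applies verbatim here, and that template differs from yours in one essential respect.

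Your reduction to $N(\mathcal{F})\ge|\mathcal{F}|+C-2$, and then to $|X|\ge|\mathcal{F}|-2$ after fixing the canonical shifted pair $F_1,F_2$, is correct. The divergence is in the induction. By collapsing $\mathcal{G}$ to $\mathcal{I}(\mathcal{F})$ at the outset you tie the two sub-problems at $n$ together: the maximal $k$-partner on $[n-1]$ must hit both $\mathcal{A}=\mathcal{F}(\bar n)$ and $\mathcal{B}=\mathcal{F}(n)$, and this interaction is exactly the ``ladder'' obstacle you anticipate but do not resolve. The paper's route (and presumably Wu's) sidesteps this by \emph{never collapsing $\mathcal{G}$}. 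After shifting both families one writes
\[
|\mathcal{F}|+|\mathcal{G}|=\bigl(|\mathcal{F}(\bar n)|+|\mathcal{G}(\bar n)|\bigr)+\bigl(|\mathcal{F}(n)|+|\mathcal{G}(n)|\bigr).
\]
The first bracket is the \emph{same} theorem on $[n-1]$ (shiftedness gives $|\mathcal{F}(\bar n)|\ge2$); the second is Theorem~\ref{F16} on $[n-1]$ with parameters $(k-1,t)$, since Lemmas~\ref{S1} and~\ref{S4} show $\mathcal{F}(n)$ is still $(t+1)$-intersecting and cross-intersecting with $\mathcal{G}(n)$. Pascal's rule recombines the two bounds with an excess of $1-\binom{n-k-t-2}{k-3}$, which is nonpositive for $k\ge3$ and vanishes exactly when $k=3$, producing the two extra extremal families. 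The case $|\mathcal{F}(n)|=0$ is handled by a direct exclusion count, as in your $|\mathcal{F}|=2$ analysis. No ladder is needed: the induction runs on $n$ alone with $k,t$ fixed, and Theorem~\ref{F16} is the only external input.

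Your scheme is not wrong in principle, but the inductive step is only a sketch, and the coupling you flag is a genuine difficulty that the two-family decomposition simply avoids.
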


Inspired by Theorems \ref{F16} and \ref{W23}, we investigate the further stability for family $\mathcal{F}$ with  $|\mathcal{F}| \geq 3$. 
The main result of our paper  is presented below. 

\begin{theorem}[Main result]  \label{51}
Let $k \geq 4, t\ge 0$ and $n \geq 2 k+t$ be integers. Let $\mathcal{F} \subseteq\binom{[n]}{k+t}$ and $\mathcal{G} \subseteq\binom{[n]}{k}$ be cross-intersecting families. If $\mathcal{F}$ is $(t+1)$-intersecting and $|\mathcal{F}| \geq 3$, then
$$
|\mathcal{F}|+|\mathcal{G}| \leq\binom{n}{k}-\binom{n-k-t}{k}-\binom{n-k-t-1}{k-1}-\binom{n-k-t-2}{k-2}+3.
$$
For $n > 2 k+t$, the equality holds if and only if $\mathcal{F}=\left\{F_1, F_2, F_3\right\}$ with $F_i:=[k+t-1]\cup \{k+t-1+i\}$ for each $i\in [3]$  and $\mathcal{G}=\left\{G \in\binom{[n]}{k}: G \cap F_i \neq\emptyset \text{ for any } i=1, 2, 3\right\}$, or  one more possibility when $k=4, \mathcal{F}=\{[t+3] \cup\{i\}: i \in\{t+4, \ldots, n\}\}$ and $\mathcal{G}=\left\{G \in\binom{[n]}{4}: G \cap[t+3] \neq \emptyset\right\}$ under isomorphism.
\end{theorem}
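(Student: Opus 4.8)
The plan is to reduce the problem to a purely extremal statement about $\mathcal{F}$ and then to run an optimization governed by a single integer parameter. Since $\mathcal{F},\mathcal{G}$ are cross-intersecting, every $G\in\mathcal{G}$ meets every $F\in\mathcal{F}$, so $\mathcal{G}\subseteq \mathcal{G}^{*}(\mathcal{F}):=\{G\in\binom{[n]}{k}:G\cap F\neq\emptyset \text{ for all }F\in\mathcal{F}\}$. Writing $N(\mathcal{F}):=|\mathcal{G}^{*}(\mathcal{F})|$ we get $|\mathcal{F}|+|\mathcal{G}|\le \Phi(\mathcal{F}):=|\mathcal{F}|+N(\mathcal{F})$, and it suffices to prove $\Phi(\mathcal{F})\le B$, where $B$ is the right-hand side of the theorem, with equality only for the stated families. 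Passing to complements, $N(\mathcal{F})=\binom{n}{k}-|U_{\mathcal{F}}|$, where $U_{\mathcal{F}}=\{H\in\binom{[n]}{n-k}:F\subseteq H\text{ for some }F\in\mathcal{F}\}$; this reformulation makes the inclusion--exclusion computations below transparent. As a preliminary normalization I would apply the usual $(i,j)$-shifts simultaneously to $\mathcal{F}$ and $\mathcal{G}$: shifting preserves $|\mathcal{F}|$, $|\mathcal{G}|$, the cross-intersecting property, and the $(t+1)$-intersecting property of $\mathcal{F}$, so I may assume both families are left-compressed, deferring the routine but delicate recovery of the exact isomorphism types in the equality case to the end.

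The base case is $|\mathcal{F}|=3$. Here I would compute $N(\{F_1,F_2,F_3\})$ by inclusion--exclusion in terms of $a_{ij}:=|F_i\cup F_j|$ and $a_{123}:=|F_1\cup F_2\cup F_3|$, namely $N=\binom{n}{k}-3\binom{n-k-t}{k}+\sum_{i<j}\binom{n-a_{ij}}{k}-\binom{n-a_{123}}{k}$. Since $\binom{n-a}{k}$ is strictly decreasing in $a$ for $n>2k+t$, maximizing $N$ forces every pairwise intersection to be as large as possible, i.e.\ $a_{ij}=k+t+1$, and then forces $a_{123}$ to be as large as possible, i.e.\ $a_{123}=k+t+2$. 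A short argument (three $2$-element ``defects'' that pairwise meet in one point but have empty common intersection) shows these two requirements hold simultaneously exactly when $\{F_1,F_2,F_3\}$ is a sunflower with a common core of size $k+t-1$ and three singleton petals. Plugging in yields $\Phi=B$ and identifies the main extremal family.

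For $|\mathcal{F}|=s\ge4$ I would argue by induction on $s$. Call $F\in\mathcal{F}$ \emph{redundant} if $N(\mathcal{F}\setminus\{F\})=N(\mathcal{F})$, equivalently if every $(n-k)$-superset of $F$ already contains some other member of $\mathcal{F}$. If some $F^{*}$ is non-redundant, then $\mathcal{F}':=\mathcal{F}\setminus\{F^{*}\}$ is still $(t+1)$-intersecting with $s-1\ge 3$ members and $N(\mathcal{F}')\ge N(\mathcal{F})+1$, whence $\Phi(\mathcal{F})=s+N(\mathcal{F})\le (s-1)+N(\mathcal{F}')=\Phi(\mathcal{F}')\le B$ by induction (or by the base case when $s=4$). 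Thus the whole difficulty concentrates in the case where \emph{every} member of $\mathcal{F}$ is redundant.

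This all-redundant case is the main obstacle, and it is exactly where the large extremal families live: for a sunflower with core of size $k+t-1$ and at least $k+2$ petals every petal is redundant, and $\Phi$ keeps increasing with the number of petals until all $n-k-t+1$ petals are present. My plan is to show, using the compressed form, that an all-redundant $(t+1)$-intersecting family with $|\mathcal{F}|\ge 4$ must be contained in a star $\{F\in\binom{[n]}{k+t}:D\subseteq F\}$ for some core $D$ --- intuitively, redundancy of every set prevents the members from spreading out and pins down a near-common core --- and that only $|D|=k+t-1$ can meet the bound. Granting this, $\Phi(\mathcal{F})$ becomes an explicit function of the number $s$ of chosen petals, and the theorem reduces to the one-variable estimate $f(s):=\binom{n-k-t+1-s}{k-s}+s\le f(3)$ for $3\le s\le n-k-t+1$: the increment $f(s+1)-f(s)=1-\binom{n-k-t-s}{k-s}$ is increasing in $s$, so $f$ is unimodal and its maximum over the range is attained at an endpoint. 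Comparing the endpoints $f(3)=\binom{n-k-t-2}{k-3}+3$ and $f(n-k-t+1)=n-k-t+1$ via Pascal's rule gives $f(3)\ge f(n-k-t+1)$, with equality if and only if $k=4$; this is precisely the source of the one extra extremal family $\{[t+3]\cup\{i\}\}$, while strictness for $k\ge5$ forces uniqueness of the three-petal sunflower. The hardest technical points will be establishing the near-core structure in the all-redundant case and converting the compressed extremal families back into the isomorphism types listed in the statement.
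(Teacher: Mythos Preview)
Your route is genuinely different from the paper's and has an appealing shape, but the load-bearing step is missing. The paper proceeds by induction on $n$ (not on $|\mathcal{F}|$): it shifts, splits $\mathcal{F}$ and $\mathcal{G}$ into their $n$-free and $n$-containing parts, applies the induction hypothesis to $\mathcal{F}(\bar n),\mathcal{G}(\bar n)$, and in Case~2 invokes Frankl's earlier inequality (Theorem~\ref{F16}) on $\mathcal{F}(n),\mathcal{G}(n)$. No structural classification of $\mathcal{F}$ is ever needed; the second extremal family at $k=4$ falls out of the equality case of Theorem~\ref{F16} together with the identity $1=\binom{n-k-t-3}{k-4}$.

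In your scheme, by contrast, everything hinges on the assertion that a left-compressed, $(t+1)$-intersecting, \emph{all-redundant} family with at least four members must sit inside a star $\{F:D\subseteq F\}$, and moreover that only $|D|=k+t-1$ is relevant. You flag this as ``the hardest technical point,'' and indeed it is not proved --- the ``intuitively, redundancy prevents spreading out'' sentence is not an argument, and it is unclear how to turn it into one. Redundancy of $F$ only says that every $k$-set avoiding $F$ also avoids some other member; translating this local covering condition into a global common core requires real work (and the reduction from an arbitrary core size $d$ to $d=k+t-1$ is a separate, unaddressed optimization). Without these two steps your induction on $s$ stalls precisely at the families that matter, since the full sunflower with all $n-k-t+1$ petals is all-redundant and is exactly where the $k=4$ extremal configuration lives. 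A smaller issue: in the base case $|\mathcal{F}|=3$, the inclusion--exclusion terms $\sum\binom{n-a_{ij}}{k}$ and $-\binom{n-a_{123}}{k}$ are not independent, so ``maximize each $\binom{n-a_{ij}}{k}$ separately'' is not a valid optimization; the clean way is to observe that $N(\mathcal{F})=\binom{n}{k}-|U_{\mathcal{F}}|$ and apply Kruskal--Katona to minimize $|U_{\mathcal{F}}|$ over $3$-element families, which immediately gives the three-petal sunflower.
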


In contrast to the Erd\H{o}s--Ko--Rado theorem and the Hilton--Milner theorem (see Theorem \ref{H67}), which gives an upper  bound smaller than $k{n-2 \choose k-2}$. This differs by an order of magnitude of $n$ with the bound ${n-1 \choose k-1}$. 
While, all the upper bounds in Theorems \ref{F16}, \ref{W23} and \ref{51} have the same leading term (asymptotically) and only differ in the smaller order terms. It is interesting that this difference in smaller order terms will be enough to establish the stability results in Section \ref{sec-1-2}.

\medskip 
\noindent 
{\bf Organization.} 
The proof of Theorem \ref{51} will be provided in Section \ref{sec-2}. 
Before showing the proof, we would like to illustrate the 
significance of Theorem \ref{51} for 
the stability results of the Erd\H{o}s--Ko--Rado theorem 
in the next section. 
More precisely,  
we show that the above results on cross-intersecting families can be applied to prove the celebrated Hilton--Milner theorem, the Han--Kohayakawa theorem as well as the Huang--Peng theorem; see the forthcoming Theorems \ref{H67}, \ref{H17} and \ref{H24}, respectively. 
The second main contribution of this paper is to provide new methods to prove these stability results. 
The detailed discussions will be presented in Section \ref{sec-3}. 
To ensure our presentation clearly, some tedious arguments will be postponed to the Appendix.

\subsection{Applications: Stability for Erd\H{o}s--Ko--Rado's theorem}

\label{sec-1-2}

In this section, we show that our result is extremely useful for the study of stability of $k$-uniform intersecting families. Indeed, we will provide a simple unified approach to proving several important stability results for the Erd\H{o}s--Ko--Rado theorem; see, e.g., \cite{H67,H17,H24}. 

A family $\mathcal{F}$ that satisfies  $\mathcal{F}=\big\{F\in \binom{[n]}{k}: x\in F\big\}$ is called a \textit{full star} centered at $x$, and $\mathcal{F}$ is called \textit{EKR} or a \textit{star} if
$\mathcal{F}$ is contained in a full star. Such family is a ``trivial" example of intersecting family. If $\mathcal{F}$ is not EKR, then  $\cap_{F\in\mathcal{F}}F=\emptyset$ and $\mathcal{F}$ is called \textit{non-trivial}.

In 1967,  Hilton and Milner \cite{H67} determined the maximum size of a non-trivial $k$-uniform intersecting family. 
We denote 
$\mathcal{HM}(n, k)=\big\{F \in\binom{[n]}{k}: 1 \in F, F \cap[2, k+1] \neq \emptyset\big\} \cup\{[2, k+1]\}$. In the case $k=3$, we denote 
$\mathcal{T}(n, 3)=\big\{F \in\binom{[n]}{3}:|F \cap[3]| \geq 2\big\}$. 

\begin{theorem}[Hilton--Milner \cite{H67}]
\label{H67}
Let $k \geq 2$ and $n \geq 2 k+1$ be integers. Let $\mathcal{F} \subseteq\binom{[n]}{k}$ be an intersecting family.  
If $\mathcal{F}$ is not EKR, then
$$
|\mathcal{F}|\leq\binom{n-1}{k-1}-\binom{n-k-1}{k-1}+1.
$$
For $k=2$ or $k\ge 4$, 
the equality holds if and only if $\mathcal{F}$ is isomorphic to $ \mathcal{HM}(n, k)$; 
For $k=3$, the equality holds if and only if $\mathcal{F}$ is isomorphic to $ \mathcal{HM}(n,3)$ or $ \mathcal{T}(n, 3)$.
\end{theorem}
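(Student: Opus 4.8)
The plan is to derive Theorem~\ref{H67} from Frankl's cross-intersecting inequality (Theorem~\ref{F16}) by a link/co-link reduction at a single vertex. Since replacing $\mathcal{F}$ by a maximal intersecting family containing it only enlarges $|\mathcal{F}|$ and cannot create a common element, I would first reduce to the case where $\mathcal{F}$ is a \emph{maximal} non-trivial intersecting family (this also means that at equality $\mathcal{F}$ must itself be maximal, which pins down the extremal configurations). Then I fix a vertex $x \in [n]$ and set $\mathcal{F}_{\bar x} = \{F \in \mathcal{F} : x \notin F\}$ and $\mathcal{F}(x) = \{F \setminus \{x\} : x \in F \in \mathcal{F}\}$. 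Non-triviality gives $\mathcal{F}_{\bar x} \neq \emptyset$, and $|\mathcal{F}| = |\mathcal{F}(x)| + |\mathcal{F}_{\bar x}|$. Because $\mathcal{F}$ is intersecting and no member of $\mathcal{F}_{\bar x}$ contains $x$, the families $\mathcal{F}_{\bar x} \subseteq \binom{[n]\setminus\{x\}}{k}$ and $\mathcal{F}(x) \subseteq \binom{[n]\setminus\{x\}}{k-1}$ are cross-intersecting on the $(n-1)$-element ground set $[n]\setminus\{x\}$.

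This is precisely the configuration of Theorem~\ref{F16} with $t=1$, smaller uniformity $k-1$, larger uniformity $k$, and ground set of size $n-1$, \emph{provided} the $k$-uniform family $\mathcal{F}_{\bar x}$ is $2$-intersecting. Granting this, and noting that the hypothesis $n-1 \ge 2(k-1)+1$ follows from $n \ge 2k+1$ while Theorem~\ref{F16} only requires $|\mathcal{F}_{\bar x}| \ge 1$, I obtain
\begin{equation*}
|\mathcal{F}| = |\mathcal{F}(x)| + |\mathcal{F}_{\bar x}| \le \binom{n-1}{k-1} - \binom{(n-1)-(k-1)-1}{k-1} + 1 = \binom{n-1}{k-1} - \binom{n-k-1}{k-1} + 1,
\end{equation*}
which is exactly the asserted bound.

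For the equality case I would invoke the characterization in Theorem~\ref{F16}. Its first extremal configuration forces $\mathcal{F}_{\bar x}$ to be a single $k$-set $B$ and $\mathcal{F}(x)$ to consist of all $(k-1)$-sets meeting $B$; unwinding the reduction gives $\mathcal{F} = \{F : x \in F,\ F \cap B \neq \emptyset\} \cup \{B\}$, that is, $\mathcal{F} \cong \mathcal{HM}(n,k)$. The decisive point is that the reduction sends the uniformity $k$ to the parameter $k-1$ of Theorem~\ref{F16}, so Frankl's exceptional extremal family---which occurs exactly when that parameter equals $2$---arises here precisely when $k=3$; translating it back yields the family $\mathcal{T}(n,3)$ with core $\{x\}\cup B'$, accounting for the second extremal family in the statement. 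The boundary cases $k\in\{2,3\}$ and the condition $n>2k$ for strictness match the hypotheses $n\ge 2k+1$ and the case split in Theorem~\ref{H67}.

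The main obstacle is securing the $2$-intersecting hypothesis on the co-link $\mathcal{F}_{\bar x}$, which is genuinely false for an arbitrary $x$ (for instance, for the Fano line system no vertex works, since four lines avoid each point and pairwise meet in a single point). To handle this I would take $x$ to be a vertex of maximum degree: for the extremal families the center/core vertices are exactly the maximum-degree ones, and the co-link collapses to a small, highly intersecting remainder (a single set for $\mathcal{HM}$, a common-pair star for $\mathcal{T}$). The technical heart of the argument is then to show that, after passing to a maximal family, the maximum-degree co-link is indeed $2$-intersecting, and that the few genuinely small families for which this mechanism degenerates are strictly below the Hilton--Milner bound and hence contribute no new extremal configurations. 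These routine but lengthy verifications, together with the small-parameter checks, I would relegate to the Appendix, leaving the clean two-step reduction above as the main line of proof.
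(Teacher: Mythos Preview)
Your reduction to Theorem~\ref{F16} via the link/co-link pair $(\mathcal{F}_{\bar x},\mathcal{F}(x))$ is exactly the paper's main line (its Case~(A)). The divergence, and the gap, lies in how the $2$-intersecting hypothesis on $\mathcal{F}_{\bar x}$ is obtained. The paper does not argue via maximum degree or maximality; it \emph{shifts}. Lemma~\ref{S1} says that for any shifted intersecting family $\mathcal{H}$ the co-link $\mathcal{H}(\bar 1)$ is automatically $2$-intersecting, so Theorem~\ref{F16} applies directly with $x=1$. The cost is that shifting may destroy non-triviality, and the paper handles this in a separate short Case~(B): at the first shift $s_{i,j}$ that produces a star, one decomposes at the pair $\{i,j\}$ and invokes Theorem~\ref{mainH} instead.

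Your proposed substitute---pick $x$ of maximum degree in a maximal family and claim the co-link is $2$-intersecting ``or else the family is small''---is neither proved nor routine. A concrete obstruction: take $k=3$, $E_1=\{2,3,4\}$, $E_2=\{2,5,6\}$, and $\mathcal{F}=\mathcal{K}_2(n,3)=\{G:1\in G,\ G\cap E_1\neq\emptyset,\ G\cap E_2\neq\emptyset\}\cup\{E_1,E_2\}$. This is a \emph{maximal} non-trivial intersecting family whose unique maximum-degree vertex is $1$, yet $\mathcal{F}_{\bar 1}=\{E_1,E_2\}$ with $|E_1\cap E_2|=1$, so the hypothesis of Theorem~\ref{F16} fails at the best possible vertex. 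You would then need a standalone bound showing every such ``degenerate'' family lies strictly below the Hilton--Milner value, and nothing in the proposal supplies one; labeling this verification ``routine'' hides precisely the difficulty that the paper's shifting argument resolves cleanly.
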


A family $\mathcal{F}$ is called \textit{HM} if
$\mathcal{F}$ is isomorphic to a subfamily of $\mathcal{HM}(n, k)$.  In 2017, Han and Kohayakawa \cite{H17} took stability further and determined the maximum $k$-uniform intersecting family that is neither EKR nor HM, and not a subfamily of $\mathcal{T}(n, 3)$ if $k=3$. We shall mention that for $k\geq 4$, this was already solved by Hilton and Milner \cite{H67} in 1967. Before introducing their result, let us  describe some set families.

Let $k \geq 3$ and $n \geq 2 k+1$ be integers. 
For $ i \in[1, k-1]$, $E \in \binom{[n]}{k-1}$ and $x_0 \in [n] \setminus E$, let $J_i$ be a subset of $[n]\setminus (E\cup \{x_0\})$ with $|J_i|=i$, we define 
$$
\mathcal{J}_i(n, k)=\left\{F \in\binom{[n]}{k}: x_0 \in F, F \cap (E\cup\{j\}) \neq \emptyset \text{ for each } j \in J_i\right\} \cup \Bigl\{E \cup\{j\}: j \in J_i\Bigr\} .
$$
Note that $\mathcal{J}_1(n,k)\cong\mathcal{HM}(n,k)$.  
For $i \in[2, k]$, $E \in \binom{[n]}{i}$ and $x_0 \in[n]\backslash E$, we define 
$$
\mathcal{G}_i(n, k)=\left\{G \in\binom{[n]}{k}: E \subseteq G\right\} \cup\left\{G \in\binom{[n]}{k}: x_0 \in G, G \cap E \neq \emptyset\right\} .
$$
Note that $\mathcal{G}_2(n, 3)$ is isomorphic to $\mathcal{T}(n, 3)$.

\begin{theorem}[Han--Kohayakawa \cite{H17}] 
\label{H17}
	Let $k \geq 3$ and $ n\geq 2 k+1$ be integers. Let $\mathcal{F} \subseteq\binom{[n]}{k}$ be an intersecting family. If $\mathcal{F}$ is neither EKR nor HM, and $\mathcal{F} \nsubseteq \mathcal{G}_2(n, 3)$ for $k=3$. Then
$$
|\mathcal{F}|\leq\binom{n-1}{k-1}-\binom{n-k-1}{k-1}-\binom{n-k-2}{k-2}+2.
$$
 For $k=3$ or $k\ge 5$, the equality holds if and only if $\mathcal{F}$ is isomorphic to  $\mathcal{J}_2(n, k)$; 
  For $k=4$, the equality holds if and only if $\mathcal{F} $ is isomorphic to $ \mathcal{J}_2(n, 4), \mathcal{G}_2(n, 4)$ or $\mathcal{G}_3(n, 4)$.
\end{theorem}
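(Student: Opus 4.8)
The plan is to deduce Theorem~\ref{H17} from Wu's cross-intersecting inequality (Theorem~\ref{W23}) in the same spirit in which one deduces the Hilton--Milner bound from Frankl's inequality (Theorem~\ref{F16}). Let $\mathcal{A}\subseteq\binom{[n]}{k}$ be intersecting, and let $x_0\in[n]$ be a vertex of maximum degree in $\mathcal{A}$. Split $\mathcal{A}$ according to whether a set contains $x_0$: set $\mathcal{N}=\{A\in\mathcal{A}:x_0\notin A\}$ and $\mathcal{S}=\{A\setminus\{x_0\}:A\in\mathcal{A},\,x_0\in A\}$, both viewed as families on the ground set $[n]\setminus\{x_0\}$ of size $n-1$. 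Then $\mathcal{N}\subseteq\binom{[n]\setminus\{x_0\}}{k}$ and $\mathcal{S}\subseteq\binom{[n]\setminus\{x_0\}}{k-1}$ are cross-intersecting (since $\mathcal{A}$ is intersecting and the members of $\mathcal{N}$ avoid $x_0$), and $|\mathcal{A}|=|\mathcal{N}|+|\mathcal{S}|$. The choice of $x_0$ controls $|\mathcal{N}|$: one has $|\mathcal{N}|=0$ exactly when $\mathcal{A}$ is EKR, while $|\mathcal{N}|=1$ forces $\mathcal{A}\subseteq\{A\ni x_0:A\cap A_0\neq\emptyset\}\cup\{A_0\}$ for the unique $A_0\in\mathcal{N}$, i.e.\ $\mathcal{A}$ is HM (map $x_0\mapsto1$ and $A_0\mapsto[2,k+1]$). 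Hence the hypothesis ``neither EKR nor HM'' yields $|\mathcal{N}|\geq2$.

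First I would treat the main case $k\geq4$ under the extra assumption that $\mathcal{N}$ is \emph{$2$-intersecting}. Here $\mathcal{N}$ plays the role of the $(t+1)$-intersecting family of Theorem~\ref{W23} under the substitution $(k,t,n)\mapsto(k-1,1,n-1)$: indeed the smaller uniformity $k-1\geq3$, the ground set satisfies $n-1\geq2(k-1)+1$, the larger uniformity is $(k-1)+1=k$, and $|\mathcal{N}|\geq2$. Theorem~\ref{W23} then gives $|\mathcal{A}|=|\mathcal{N}|+|\mathcal{S}|\leq\binom{n-1}{k-1}-\binom{n-k-1}{k-1}-\binom{n-k-2}{k-2}+2$, which is exactly the claimed bound once the three binomial substitutions are expanded. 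For equality I would pull back the extremal configurations of Theorem~\ref{W23}: the generic one, $\mathcal{N}=\{F_1,F_2\}$ with $|F_1\cap F_2|=k-1$, becomes $F_i=E\cup\{j_i\}$ for a common $(k-1)$-set $E$, and reinserting $x_0$ into $\mathcal{S}$ reconstructs $\mathcal{J}_2(n,k)$; for $k=4$ the two exceptional configurations of Theorem~\ref{W23} (the case $k-1=3$) pull back to $\mathcal{G}_2(n,4)$ and $\mathcal{G}_3(n,4)$, matching the statement. Note that these last two families also fall into the present case, since for them the maximum-degree vertex is the apex $x_0$ and the corresponding $\mathcal{N}$ is $2$-intersecting.

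The hard part is the complementary case, where $\mathcal{N}$ fails to be $2$-intersecting, together with the base case $k=3$ (for which $k-1=2$ lies outside the range of Theorem~\ref{W23} and must be handled directly). When $\mathcal{N}$ is not $2$-intersecting there are $A,B\in\mathcal{N}$ with $|A\cap B|=1$; every set of $\mathcal{A}$, and in particular every $(k-1)$-set in $\mathcal{S}$, must then meet both of these nearly disjoint $k$-sets, which caps $|\mathcal{S}|=\deg(x_0)$ at the number of such $(k-1)$-sets, an expression of order $n^{k-2}$. One must then also bound $|\mathcal{N}|$ itself using the maximality of $\deg(x_0)$ and compare the resulting total with the target bound. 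Because the target and all these contributions are of the same order $n^{k-2}$, the comparison is genuinely delicate rather than asymptotically automatic. Carrying it out should show that for $k\geq4$ this case stays \emph{strictly} below the bound, hence contributes no new extremal family, whereas for $k=3$ it is precisely the source of $\mathcal{G}_2(n,3)=\mathcal{T}(n,3)$, which explains why that family must be excluded from the hypothesis and leaves $\mathcal{J}_2(n,3)$ as the unique optimum after a direct argument for $k=3$. This binomial bookkeeping and the separate treatment of $k=3$ are the main obstacle, and are exactly the kind of lengthy-but-routine computation the authors defer to the Appendix.
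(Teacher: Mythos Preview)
Your core idea---apply Theorem~\ref{W23} with $t=1$ to the cross-intersecting pair $(\mathcal{N},\mathcal{S})=(\mathcal{F}(\bar{x_0}),\mathcal{F}(x_0))$---is exactly the paper's Case~1, and your equality analysis pulling back the two exceptional $k-1=3$ configurations of Theorem~\ref{W23} to $\mathcal{G}_2(n,4)$ and $\mathcal{G}_3(n,4)$ is correct. The substantive difference is how the $2$-intersecting hypothesis on $\mathcal{N}$ is secured. You choose $x_0$ to be a maximum-degree vertex and then split into ``$\mathcal{N}$ is $2$-intersecting'' versus ``not''. The paper instead shifts $\mathcal{F}$ first: by Lemma~\ref{S1}, once $\mathcal{F}$ is shifted the family $\mathcal{F}(\bar 1)$ is \emph{automatically} $2$-intersecting, so your hard case simply never arises. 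The cost is that shifting can destroy ``not EKR'' or ``not HM''; the paper handles this by catching the first shift $s_{i,j}$ that makes the family EKR (Case~2) or HM (Case~3), partitioning along the pair $\{i,j\}$, and applying Theorem~\ref{mainH} and Lemma~\ref{W231} to the disjoint cross-intersecting pair $\mathcal{G}(i,\bar j),\mathcal{G}(\bar i,j)$, together with the structural fact $|\mathcal{G}(\bar i,\bar j)|\le 1$ that the situation forces.

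Your complementary case is a genuine gap, not routine bookkeeping. When $A,B\in\mathcal{N}$ satisfy $|A\cap B|=1$ you correctly cap $|\mathcal{S}|$ at $\binom{n-1}{k-1}-2\binom{n-k-1}{k-1}+\binom{n-2k}{k-1}$, but you still have to bound $|\mathcal{N}|$, and the maximum-degree choice gives nothing directly: it only says $|\mathcal{N}|=\min_y|\mathcal{A}(\bar y)|$ is the \emph{diversity}, which is a lower bound on every $|\mathcal{A}(\bar y)|$, not an upper bound on $|\mathcal{N}|$. Bounding the diversity of an intersecting family tightly enough for your comparison is itself a nontrivial structural result (this is the territory of the Frankl and Kupavskii diversity theorems, cf.~\cite{K19,FW2024-EUJC}), not a one-line inclusion--exclusion, and the target, your cap on $|\mathcal{S}|$, and the needed bound on $|\mathcal{N}|$ are all of the same order $n^{k-2}$. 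The paper's shifting device is precisely what sidesteps this: it trades your unstructured ``$\mathcal{N}$ not $2$-intersecting'' scenario for the very concrete information $|\mathcal{G}(\bar i,\bar j)|\le 1$, after which Cases~2--3 close in a few lines.
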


Han and Kohayakawa \cite{H17} also asked what is the maximum $k$-uniform intersecting family which is neither EKR nor HM, and not a subfamily of $\mathcal{J}_2(n, k)$,  and not a subfamily of $\mathcal{G}_2(n, 4)$, $\mathcal{G}_3(n, 4)$ if $k=4$. In 2017, Kostochka and Mubayi \cite{KM17} answered this question for large enough $n$. In 2024, Huang and Peng \cite{H24} completely solved this question for all $n\geq 2k+1$.

To construct a large intersecting family that is not EKR nor HM, note that $\mathcal{H}=\mathcal{HM}(n,k)$ has the property that $\mathcal{H}(\bar{1})=\{[2,k+1]\}$. As such, consider families $\mathcal{F}\subseteq\binom{[n]}{k}$ such that $|\mathcal{F}(\bar{1})|\geq 2$. Moreover, we want $\mathcal{F}(\bar{1})$ to be $\ell$-intersecting for $\ell$ as large as possible. When $\ell=k-1$, this gives arise to $\mathcal{J}_{i}(n,k)$. When $\ell=k-2$, this gives arise to $\mathcal{K}_{2}(n,k)$ where  $\mathcal{K}_2(n, k)$ is defined as follows.
For any $E_1, E_2 \in \binom{[n]}{k}$ with $|E_1\cap E_2|=k-2$, and $x_0\in [n]\backslash (E_1\cup E_2)$, let
$$
\mathcal{K}_2(n, k)=\left\{G \in\binom{[n]}{k}: x_0\in G,  G \cap E_1 \neq \emptyset \text{ and } G \cap E_2 \neq \emptyset\right\}\cup\{E_1, E_2\}.
$$

\begin{theorem}[Huang--Peng \cite{H24}] \label{H24}
 Let $k \geq 4$ and $\mathcal{F} \subseteq\binom{[n]}{k}$ be an intersecting family which is neither EKR nor HM, and $\mathcal{F} \nsubseteq \mathcal{J}_2(n, k)$, in addition, $\mathcal{F} \nsubseteq \mathcal{G}_2(n, 4)$ and $\mathcal{F} \nsubseteq \mathcal{G}_3(n, 4)$ if $k=4$.

\noindent(i) If $2 k+1 \leq n \leq 3 k-3$, then
$$
|\mathcal{F}| \leq\binom{n-1}{k-1}-2\binom{n-k-1}{k-1}+\binom{n-k-3}{k-1}+2.
$$
For $k\ge 5$, 
the equality holds if and only if  $\mathcal{F}$ is isomorphic to $ \mathcal{K}_2(n, k)$;
For $k=4$, the equality holds if and only if   $\mathcal{F} $ is isomorphic to $ \mathcal{K}_2(n, 4)$ or $\mathcal{J}_3(n, 4)$.

\noindent(ii) If $n \geq 3 k-2$, then
$$
|\mathcal{F}| \leq\binom{n-1}{k-1}-\binom{n-k-1}{k-1}-\binom{n-k-2}{k-2}-\binom{n-k-3}{k-3}+3.
$$
For $k=4$ or $k\ge 6$, the equality holds if and only if $\mathcal{F} $ is isomorphic to $ \mathcal{J}_3(n, k)$; 
For $k=5$, the equality holds 
if and only if  $\mathcal{F} $ is isomorphic to $\mathcal{J}_3(n, 5)$ or $\mathcal{G}_4(n, 5)$.
\end{theorem}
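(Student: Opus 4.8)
The plan is to convert the single intersecting family into a cross-intersecting pair and then feed it into Theorem \ref{51}. First I would fix a distinguished element, say an element $1\in[n]$ of maximum degree in $\mathcal{F}$, and split $\mathcal{F}=\mathcal{F}_1\cup\mathcal{F}_{\bar 1}$ according to whether a set contains $1$. Writing $\mathcal{A}=\{F\setminus\{1\}:F\in\mathcal{F}_1\}\subseteq\binom{[2,n]}{k-1}$ and keeping $\mathcal{F}_{\bar 1}\subseteq\binom{[2,n]}{k}$, the intersecting property of $\mathcal{F}$ says exactly that $\mathcal{A}$ and $\mathcal{F}_{\bar 1}$ are cross-intersecting over the ground set $[2,n]$ of size $n-1$, while $|\mathcal{F}|=|\mathcal{A}|+|\mathcal{F}_{\bar 1}|$. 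The point of the reduction is the parameter match: with $N=n-1$, $K=k-1$ and $T=1$ one has $K+T=k$, so $\mathcal{F}_{\bar 1}$ plays the role of the $(T+1)=2$-intersecting $(K+T)$-uniform family and $\mathcal{A}$ the role of the $K$-uniform family in Theorem \ref{51}; moreover $N\ge 2K+T$ holds precisely when $n\ge 2k$, and the quantity $\binom{N}{K}-\binom{N-K-T}{K}-\binom{N-K-T-1}{K-1}-\binom{N-K-T-2}{K-2}+3$ collapses to the bound in part (ii).

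The crux is a structural analysis of $\mathcal{F}_{\bar 1}$. Since $\mathcal{F}$ is not EKR, $\mathcal{F}_{\bar 1}\ne\emptyset$; since $\mathcal{F}$ is not HM one checks $|\mathcal{F}_{\bar 1}|\ge 2$ (a single member $B$ would force $\mathcal{F}\subseteq\mathcal{HM}$ centred at $1$ with core $B$); and the exclusion of $\mathcal{J}_2$ (and of $\mathcal{G}_2(n,4),\mathcal{G}_3(n,4)$ when $k=4$) is designed precisely to rule out the case where $\mathcal{F}_{\bar 1}$ is a pair sharing $k-1$ elements. I would then show that, for the maximum-degree centre, $\mathcal{F}_{\bar 1}$ is $2$-intersecting unless $\mathcal{F}$ is already dominated by an allowed extremal shape; the heart of this step is to argue that a pair $B_1,B_2\in\mathcal{F}_{\bar 1}$ with $|B_1\cap B_2|=1$ forces $\mathcal{A}$, and hence $\mathcal{F}$, to be strictly smaller than both target values. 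This leaves two live configurations: (a) $|\mathcal{F}_{\bar 1}|=2$ with $|B_1\cap B_2|=k-2$, which is exactly the $\mathcal{K}_2(n,k)$ shape (among all non-$\mathcal{J}_2$ pairs this maximizes $|\mathcal{A}|$, since a smaller union $|B_1\cup B_2|$ enlarges the count of $(k-1)$-sets meeting both); and (b) $\mathcal{F}_{\bar 1}$ is $2$-intersecting with $|\mathcal{F}_{\bar 1}|\ge 3$.

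In configuration (b) with $k\ge 5$ (so that $K=k-1\ge 4$ and Theorem \ref{51} is available) I would invoke Theorem \ref{51} directly to obtain $|\mathcal{F}|=|\mathcal{A}|+|\mathcal{F}_{\bar 1}|\le \binom{n-1}{k-1}-\binom{n-k-1}{k-1}-\binom{n-k-2}{k-2}-\binom{n-k-3}{k-3}+3=:V_2$, the value of part (ii); the equality case of Theorem \ref{51} (three $k$-sets sharing a common $(k-1)$-core, together with its $K=4$, i.e.\ $k=5$, exception of all $k$-sets containing a fixed $(k-1)$-set) translates back into $\mathcal{J}_3(n,k)$, respectively $\mathcal{G}_4(n,5)$. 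In configuration (a), a direct inclusion--exclusion count of the $(k-1)$-sets of $[2,n]$ meeting both $B_1$ and $B_2$ gives $|\mathcal{A}|=\binom{n-1}{k-1}-2\binom{n-k-1}{k-1}+\binom{n-k-3}{k-1}$, whence $|\mathcal{F}|\le V_1:=\binom{n-1}{k-1}-2\binom{n-k-1}{k-1}+\binom{n-k-3}{k-1}+2$, attained by $\mathcal{K}_2(n,k)$. Comparing the two candidate maxima, the difference $V_1-V_2$ is nonnegative for $2k+1\le n\le 3k-3$ and nonpositive for $n\ge 3k-2$: so $\mathcal{K}_2(n,k)$ wins in the first range (part (i)) and $\mathcal{J}_3(n,k)$ wins in the second (part (ii)), and the boundary coincidences at small $k$ (for instance $V_1=V_2$ when $k=4,\ n=9$) account for the extra extremal families listed for $k=4$.

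The main obstacle is the structural step of the second paragraph: choosing the centre correctly and proving that, barring the excluded families, $\mathcal{F}_{\bar 1}$ is $2$-intersecting with a controlled number of members, together with the quantitative domination of every non-$2$-intersecting configuration. Two further technical wrinkles are the low-uniformity cases---$k=4$, where $K=k-1=3$ falls below the hypothesis of Theorem \ref{51} and the $|\mathcal{F}_{\bar 1}|\ge 3$ branch must be handled by hand (or via Theorem \ref{W23} supplemented by an extra argument), and $k=5$, which produces the additional extremal family $\mathcal{G}_4(n,5)$---and the bookkeeping of the equality characterizations, where one must verify that the extremal pairs supplied by Theorem \ref{51} and by the $\mathcal{K}_2$ count are, up to isomorphism, exactly the families named in the statement.
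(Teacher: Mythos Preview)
Your high-level plan---split at a distinguished vertex, feed the resulting cross-intersecting pair into Theorem~\ref{51}, and compare the outcome against the $\mathcal{K}_2$ count---is exactly the paper's strategy, and your handling of configuration~(a) and the $V_1$ vs.\ $V_2$ comparison matches the paper's opening discussion of Section~3.3 essentially verbatim.

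The genuine gap is the structural step you yourself flag as the main obstacle. Choosing the maximum-degree element does \emph{not} guarantee that $\mathcal{F}_{\bar 1}$ is $2$-intersecting, and your proposed remedy (``a pair $B_1,B_2$ with $|B_1\cap B_2|=1$ forces $\mathcal{A}$ small'') does not control $|\mathcal{F}_{\bar 1}|$: the existence of such a pair bounds the number of $(k-1)$-sets meeting both $B_1$ and $B_2$, but $\mathcal{F}_{\bar 1}$ itself could be large, and the only cross-intersecting bound available without the $2$-intersecting hypothesis (Theorem~\ref{mainH} or \ref{F16}) gives $\binom{n-1}{k-1}-\binom{n-k-1}{k-1}+1$, which is too weak. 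The paper resolves this differently: it \emph{shifts} $\mathcal{F}$ first, and Lemma~\ref{S1} then guarantees that $\mathcal{F}(\bar 1)$ is $2$-intersecting automatically. The price is that shifting may push $\mathcal{F}$ into one of the excluded shapes (EKR, HM, or a subfamily of $\mathcal{J}_2$), and the paper spends Cases~2--4 handling exactly this, using Lemmas~\ref{W231} and~\ref{W232} on the pair $\mathcal{G}(i,\bar j),\,\mathcal{G}(\bar i,j)$ at the critical shift $s_{i,j}$. Your proposal has no analogue of this machinery; without either the shifting device or a completed argument for the non-$2$-intersecting case, the proof does not go through. You are right that $k=4$ needs separate treatment (the paper does this in Appendix~\ref{App-B}), but even for $k\ge 5$ the $2$-intersecting step is where the real work lies.
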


Apart from the above stability results, there are a large number of results involving the stability for $t$-intersecting families; see, e.g., \cite{BL2022,CLW2021,CLLW2022,GLX2022,L24,OV2021} and references therein. 
As mentioned before, various methods involving the 
celebrated Erd\H{o}s--Ko--Rado theorem are presented in the literature.  Motivated by these methods, 
there are also many alternative proofs for the Hilton--Milner theorem; see, e.g., \cite{BW2024,F19,FF1986,FT92, H18,KZ2018}. 
For the Han--Kohayakawa theorem and Huang--Peng theorem, a more general result of Kupavskii \cite{K19,Kup2024} can imply alternative proofs in the case $k\geq 5$. Unfortunately, the cases $k=3$ and $k=4$ cannot be proved following his method in a straightforward way. 
The arguments in these cases are relatively more complicated since some exceptional extremal families  appear. 
Recently, Ge, Xu and Zhao \cite{GXZ2024} 
provided another alternative proof of the Han--Kohayakawa theorem by using a robust linear algebra method, and they also obtained the $t$-th level stability result for the Erd\H{o}s--Ko--Rado theorem. 
It seems that the original proofs of Han and Kohayakawa \cite{H17} and Huang and Peng \cite{H24} are quite technical.  Therefore, it is  quite interesting to give new short proofs of the Han--Kohayakawa theorem as well as the Huang--Peng theorem. 

The main contribution of this paper is to present a unified simple approach to proving the stability results for intersecting families. 
For simplicity, we will not completely expand on these full arguments in the case $k=3$ or $4$.  For interested readers, we will provide detailed proofs in Appendices \ref{App-A} and \ref{App-B} using our unified framework. In addition, it seems feasible to further extend our result on cross-intersecting families to give a short proof of the stability result in \cite{GXZ2024}, we do not realize the details here, since the argument for small integer $k$ is a little bit more involved.


\section{Proof of Theorem \ref{51}}

\label{sec-2}

To prove Theorem \ref{51}, we review some fundamental notations and results about the shifting operation.
Let  $\mathcal{F} \subseteq 2^{[n]}$ be a family and $1 \leq i<j \leq n$. The \textit{shifting operator} $s_{i, j}$, discovered by Erdős, Ko and Rado \cite{E61}, is defined as 
$ s_{i, j}(\mathcal{F})=\left\{s_{i, j}(F): F \in \mathcal{F}\right\}$, where
$$
s_{i, j}(F)= \begin{cases}(F \backslash\{j\}) \cup\{i\} & \text { if } j \in F, i \notin F \text { and } (F \backslash\{j\}) \cup\{i\} \notin \mathcal{F}, \\ F & \text { otherwise. }\end{cases}
$$
Obviously, we have $\left|s_{i, j}(F)\right|=|F|$ and $\left|s_{i, j}(\mathcal{F})\right|=|\mathcal{F}|$.  A frequently used property  is that $s_{i, j}$ maintains the $t$-intersecting property of a family. 
A family $\mathcal{F} \subseteq 2^{[n]}$ is called \textit{shifted} if for all $ F \in \mathcal{F}$,  $i<j$ with $i\notin F$ and $j\in F$, then $(F \backslash\{j\}) \cup\{i\} \in \mathcal{F}$.  
It is well-known that every intersecting family can be transformed to a shifted intersecting family by applying shifting operations repeatedly.  
There are many nice properties  of  shifted families. For example, 
if $\mathcal{F}$ is a shifted family 
and $\{a_1,\ldots,a_k\}\in \mathcal{F}$ with $a_1<\cdots<a_k$, then for any set  $\{b_1,\ldots,b_k\}$ with $b_1<\cdots<b_k$ and $b_i\leq a_i$ for each $i\in[1,k]$, we have $\{b_1,\ldots,b_k\}\in \mathcal{F}$. 
For $\mathcal{F} \subseteq 2^{[n]}$ and $i\in [n]$, we denote 
$\mathcal{F}(\bar{i}) =\left\{F \in \mathcal{F}: i \notin F \right\}$ and 
$\mathcal{F}(i)=\left\{F\backslash \{i\}: i \in F \in \mathcal{F}\right\}$.

\begin{lemma}\label{S1}
Let $k\geq 1 $ and $ t\geq 0$ be integers. Let  $\mathcal{F} \subseteq 2^{[n]}$ be a shifted $(t+1)$-intersecting family. Then $\mathcal{F}(\bar{1})$ is $(t+2)$-intersecting. Moreover,  if $\mathcal{F} \subseteq\binom{[n]}{k+t}$ is a shifted $(t+1)$-intersecting family and $n\geq 2k+t$, then $\mathcal{F}(n)$ is $(t+1)$-intersecting.
\end{lemma}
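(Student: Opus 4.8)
The plan is to treat the two assertions separately, in each case arguing by contradiction and exploiting the shifted hypothesis through a single well-chosen elementary shift that strictly decreases an intersection. In both parts the common element $n \geq 2k+t$ plays no role in the first assertion (which is purely combinatorial) but is exactly what supplies a free coordinate in the second.

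For the first assertion, I would take two distinct sets $F, F' \in \mathcal{F}(\bar 1)$, so that $1 \notin F$ and $1 \notin F'$. Since $\mathcal{F}$ is $(t+1)$-intersecting we already have $|F \cap F'| \geq t+1$, and the only thing to rule out is the equality $|F \cap F'| = t+1$. Here $t+1 \geq 1$, so the intersection is non-empty; I pick any $j \in F \cap F'$. Because $1 \notin F$ while $j \in F$ and $1 < j$, the shifted property guarantees $G := (F \setminus \{j\}) \cup \{1\} \in \mathcal{F}$. As $1 \notin F'$, passing from $F$ to $G$ deletes exactly the element $j$ from $F \cap F'$, so $|G \cap F'| = t$, while $G \neq F'$ since they differ at the element $1$; this contradicts that $\mathcal{F}$ is $(t+1)$-intersecting, and hence $|F \cap F'| \geq t+2$. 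The same shift applied to a hypothetical set of size $t+1$ avoiding $1$ also produces a contradiction, which shows every member of $\mathcal{F}(\bar 1)$ has size at least $t+2$ and thereby disposes of the degenerate pair $F = F'$.

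For the second assertion, the key reduction is that $\mathcal{F}(n)$ is $(t+1)$-intersecting as soon as any two members $F, F'$ of $\mathcal{F}$ with $n \in F \cap F'$ satisfy $|F \cap F'| \geq t+2$; indeed $A := F \setminus \{n\}$ and $A' := F' \setminus \{n\}$ then meet in $|F \cap F'| - 1 \geq t+1$ points. So suppose toward a contradiction that such $F, F'$ have $|F \cap F'| = t+1$. Since $|F| = |F'| = k+t$, we get $|F \cup F'| = 2(k+t) - (t+1) = 2k+t-1$, and this is precisely where the hypothesis $n \geq 2k+t$ enters: it forces $|F \cup F'| < n$, so there exists an element $i \in [n] \setminus (F \cup F')$, necessarily with $i < n$ because $n \in F \cup F'$. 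Applying the shifted property to the pair $i < n$ with $n \in F$, $i \notin F$ yields $H := (F \setminus \{n\}) \cup \{i\} \in \mathcal{F}$, and since $i \notin F'$ this $H$ meets $F'$ only in $(F \cap F') \setminus \{n\}$, whence $|H \cap F'| = t$ and $H \neq F'$ — again contradicting the $(t+1)$-intersecting property.

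Once the correct shift is identified, the steps are routine set arithmetic, so the only real decision is which element to move: a common element down to $1$ for the first part, and the largest common element $n$ down to a free coordinate for the second. I expect the main (and only minor) obstacle to be the bookkeeping of corner cases where a subfamily has fewer than two members, or where a single set must itself realize the claimed intersection size; these are handled by the size bound $k+t-1 \geq t+1$ (valid for $k \geq 2$) together with the shift argument showing that no minimal-sized set can avoid the distinguished coordinate.
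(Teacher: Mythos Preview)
Your argument is correct and follows essentially the same approach as the paper: for the first part you shift a common element $j$ of $F,F'\in\mathcal{F}(\bar 1)$ down to $1$, and for the second part you shift $n$ down to a free coordinate outside $F\cup F'$, in each case forcing the intersection to drop by one. The paper phrases both steps as a direct computation rather than by contradiction and simply disposes of the degenerate cases by assuming $|\mathcal{F}(\bar 1)|\ge 2$ and $|\mathcal{F}(n)|\ge 2$ at the outset, but the content is the same.
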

\begin{proof}
 We may assume that $|\mathcal{F}(\bar{1})|\geq 2$ and  
$|\mathcal{F}(n)|\geq 2$.
 For any $F_1, F_2 \in \mathcal{F}(\bar{1})$, let $j\in F_1 \cap F_2$.
By shiftedness, we have $(F_1\backslash \{j\})\cup\{1\} \in \mathcal{F}$. It follows from the $(t+1)$-intersecting property of $\mathcal{F}$ that $| F_1 \cap F_2|=|\left((F_1\backslash \{j\})\cup\{1\} \right) \cap F_2|+1\geq t+2$.  For the second statement, we may assume that $\mathcal{F}(n)\neq\emptyset$. For any $E_1, E_2 \in \mathcal{F}(n)$, we have $E_1\cup\{n\}, E_2\cup\{n\}\in \mathcal{F}$. Observe that
$
|E_1\cup E_2\cup\{n\}|\leq 2(k+t)-(t+1)\leq n-1.
$
So there exists $x\notin E_1\cup E_2\cup\{n\}$ such that $E_1\cup\{x\} \in \mathcal{F}$. It is immediate that
$
|E_1\cap E_2|=|(E_1\cup\{x\})\cap (E_2\cup\{n\})|\geq t+1,
$
as desired.
\end{proof}

\begin{lemma}\label{S2}
Let $k\ge 1, t\ge 0 $ and $n\geq k+t+3$ be integers.
If  $\mathcal{F} \subseteq\binom{[n]}{k+t}$ is shifted and $|\mathcal{F}|\geq 3$, then $|\mathcal{F}(\bar{n})|\geq 3$.
\end{lemma}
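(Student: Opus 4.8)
The plan is to prove the statement directly by a short case analysis on whether the largest element $n$ appears in any member of $\mathcal{F}$, exploiting shiftedness to push $n$ downwards. The only arithmetic I will need is that each $(k+t)$-set omits at least three points of $[n]$, which is exactly the content of the hypothesis $n \ge k+t+3$ (note the stronger bound $n \ge 2k+t$ of the main theorem is not required here).

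First I would dispose of the trivial case: if no $F \in \mathcal{F}$ contains $n$, then $\mathcal{F}(\bar{n}) = \mathcal{F}$ by definition, so $|\mathcal{F}(\bar{n})| = |\mathcal{F}| \ge 3$ and there is nothing left to do. Otherwise I would fix a single set $F \in \mathcal{F}$ with $n \in F$ and manufacture three members of $\mathcal{F}(\bar{n})$ out of it. Since $|F| = k+t$, the core $F \setminus \{n\}$ uses $k+t-1$ points of $[n-1]$, so the complement $[n-1] \setminus F$ has exactly $(n-1)-(k+t-1) = n-k-t \ge 3$ points. For each such point $i$ we have $i < n$, $i \notin F$ and $n \in F$, so shiftedness immediately yields $(F \setminus \{n\}) \cup \{i\} \in \mathcal{F}$; each of these sets avoids $n$, and they are pairwise distinct, being obtained from the common core $F \setminus \{n\}$ by adjoining distinct points $i$. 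This produces at least three sets in $\mathcal{F}(\bar{n})$, as required.

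I do not expect any genuine obstacle: the argument is essentially a one-line application of the definition of shiftedness together with a count of the omitted points. The only things to verify carefully are that $[n-1] \setminus F$ really contains at least three elements (this is precisely where $n \ge k+t+3$ enters) and that the three constructed sets genuinely lie in $\mathcal{F}(\bar{n})$ and are mutually distinct. One could even package the whole argument as the lower bound $|\mathcal{F}(\bar{n})| \ge \min\{|\mathcal{F}|,\, n-k-t\}$, but the two-case split above is the cleanest route to the stated conclusion.
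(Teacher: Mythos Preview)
Your proof is correct and follows essentially the same approach as the paper: both split into the cases $\mathcal{F}(n)=\emptyset$ and $\mathcal{F}(n)\neq\emptyset$, and in the latter case pick $F\in\mathcal{F}$ with $n\in F$, use $n\ge k+t+3$ to find three elements outside $F$, and apply shiftedness to replace $n$ by each of them. The only cosmetic difference is that you count $|[n-1]\setminus F|=n-k-t$ explicitly, whereas the paper simply notes $|[n]\setminus F|\ge 3$; since $n\in F$, these amount to the same thing.
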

\begin{proof}
If $\mathcal{F}(n)=\emptyset$, then $|\mathcal{F}(\bar{n})|=|\mathcal{F}|\geq 3$. If $\mathcal{F}(n)\neq\emptyset$, then there exists $F\in \mathcal{F}$ such that $n\in F$. Note that $n\geq k+t+3$. So there are at least three different elements $x, y, z\in [n]\backslash F $. By shiftedness, $(F\backslash\{n\})\cup\{x\}, (F\backslash\{n\})\cup\{y\}, (F\backslash\{n\})\cup\{z\}\in \mathcal{F}(\bar{n})$, and the result follows.
\end{proof}

\begin{lemma}[See \cite{E61}]  \label{S3}
 Let $\mathcal{F} \subseteq 2^{[n]}$ and $\mathcal{G} \subseteq 2^{[n]}$ be  cross-intersecting, and $\mathcal{F}$ be $t$-intersecting.
Then $s_{i, j}(\mathcal{F})$ and $s_{i, j}(\mathcal{G})$ are also cross-intersecting, and $s_{i, j}(\mathcal{F})$ is $t$-intersecting.
\end{lemma}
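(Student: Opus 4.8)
The plan is to verify both assertions by a four-way case analysis on whether each set involved is \emph{moved} or \emph{fixed} by $s_{i,j}$. The skeleton is identical for the two claims: every set in $s_{i,j}(\mathcal{F})$ (resp.\ $s_{i,j}(\mathcal{G})$) has a preimage in $\mathcal{F}$ (resp.\ $\mathcal{G}$), and I compare the images of a pair of sets according to the combinations (moved, moved), (moved, fixed), (fixed, fixed). The three ``uniform'' combinations reduce to a one-line computation of the relevant intersection, so the real work is concentrated in the mixed case where one set is moved and the other is fixed; that is where I expect the main obstacle to lie.

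First, for the $t$-intersecting claim, take $A',B' \in s_{i,j}(\mathcal{F})$ with preimages $A,B \in \mathcal{F}$, so that $|A \cap B| \ge t$. If neither set moves then $A' \cap B' = A \cap B$; if both move then $A' \cap B' = ((A\cap B)\setminus\{j\}) \cup \{i\}$, which has the same cardinality as $A \cap B$ because $j \in A \cap B$ and $i \notin A \cap B$. So assume $A$ moves (hence $j \in A$, $i \notin A$, and $(A\setminus\{j\})\cup\{i\} \notin \mathcal{F}$) while $B$ is fixed, and write $A' = (A\setminus\{j\})\cup\{i\}$. If $j \notin B$ or $i \in B$ then $A' \cap B$ retains a full copy of $A \cap B$ after swapping $j$ for $i$, so $|A'\cap B| \ge t$ at once. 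The delicate subcase is $j \in B$ and $i \notin B$: here $B$ can be fixed only because $B'' := (B\setminus\{j\})\cup\{i\} \in \mathcal{F}$, and applying the hypothesis to the pair $A,B''$ yields $|A \cap B''| = |A \cap B| - 1 \ge t$, whence $|A\cap B| \ge t+1$ and therefore $|A' \cap B| = |A\cap B| - 1 \ge t$.

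Secondly, for cross-intersecting, take $A' \in s_{i,j}(\mathcal{F})$ and $G' \in s_{i,j}(\mathcal{G})$ with preimages $A \in \mathcal{F}$, $G \in \mathcal{G}$ and $A \cap G \neq \emptyset$. If neither moves then $A'\cap G' = A \cap G \neq \emptyset$; if both move then $i \in A' \cap G'$. In the mixed case, say $A$ moves and $G$ is fixed (the other mixed case is symmetric, since cross-intersection treats $\mathcal{F}$ and $\mathcal{G}$ symmetrically). Suppose toward a contradiction that $A' \cap G = \emptyset$. Since $A \cap G \neq \emptyset$ but the only element deleted in forming $A'$ is $j$, this forces $A \cap G = \{j\}$ and $i \notin G$. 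Then $G$ can be fixed only because $G'' := (G\setminus\{j\})\cup\{i\} \in \mathcal{G}$; but $A \cap G'' = ((A\cap G)\setminus\{j\}) \cup (A \cap \{i\}) = \emptyset$, contradicting that $\mathcal{F}$ and $\mathcal{G}$ are cross-intersecting.

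The main obstacle, as flagged, is precisely the mixed subcase in which the naive count of the intersection falls one element short. In both parts the resolution uses the same idea: the very reason the fixed set was \emph{not} moved, namely that its $s_{i,j}$-shift already lies in the family, supplies an auxiliary set ($B''$ in the first part, $G''$ in the second) to which the governing hypothesis applies and recovers the missing element. Once all cases are assembled the lemma follows; and since $|s_{i,j}(\mathcal{F})| = |\mathcal{F}|$ as already recorded, no sets are created or lost in the process.
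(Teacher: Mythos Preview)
Your proof is correct and is the standard argument for this classical fact. The paper does not supply its own proof of this lemma; it simply cites it to \cite{E61}, so there is nothing to compare against beyond noting that your case analysis (especially the handling of the mixed subcase via the auxiliary set $B''$ or $G''$ already present in the family) is exactly the well-known argument.
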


\begin{lemma}\label{S4}
Let $t\ge 0$, $k\geq 1$ and $n \geq 2 k+t$ be integers.
Let $\mathcal{F} \subseteq\binom{[n]}{k+t}$ and $\mathcal{G} \subseteq\binom{[n]}{k}$ be shifted cross-intersecting families. Then $\mathcal{F}(n)$ and $\mathcal{G}(n)$ are cross-intersecting.
\end{lemma}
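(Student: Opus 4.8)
The plan is to argue by contradiction, using shiftedness to eliminate the artificial intersection at the coordinate $n$. Suppose $\mathcal{F}(n)$ and $\mathcal{G}(n)$ are \emph{not} cross-intersecting, so there exist $E_1\in\mathcal{F}(n)$ and $E_2\in\mathcal{G}(n)$ with $E_1\cap E_2=\emptyset$. Unwinding the definitions, this means $E_1\cup\{n\}\in\mathcal{F}$ and $E_2\cup\{n\}\in\mathcal{G}$, where $E_1\subseteq[n-1]$ with $|E_1|=k+t-1$ and $E_2\subseteq[n-1]$ with $|E_2|=k-1$. These two original sets meet only at $n$, so the cross-intersecting hypothesis on $\mathcal{F},\mathcal{G}$ is satisfied trivially through $n$; the idea is therefore to use shifting to manufacture a genuine intersection inside $[n-1]$ and reach a contradiction.

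First I would carry out the counting that produces a free coordinate. Since $E_1\cap E_2=\emptyset$, we have $|E_1\cup E_2|=(k+t-1)+(k-1)=2k+t-2$. As $E_1\cup E_2\subseteq[n-1]$ and $n\geq 2k+t$, it follows that $|[n-1]\setminus(E_1\cup E_2)|\geq (n-1)-(2k+t-2)=n-2k-t+1\geq 1$, so we may pick some $x\in[n-1]\setminus(E_1\cup E_2)$; in particular $x<n$, $x\notin E_1$, and $x\notin E_2$.

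Next I would invoke shiftedness. Since $x<n$, $x\notin E_2\cup\{n\}$, and $n\in E_2\cup\{n\}$, the shiftedness of $\mathcal{G}$ yields $((E_2\cup\{n\})\setminus\{n\})\cup\{x\}=E_2\cup\{x\}\in\mathcal{G}$. Now $E_2\cup\{x\}\in\mathcal{G}$ and $E_1\cup\{n\}\in\mathcal{F}$, so the cross-intersecting property of $\mathcal{F},\mathcal{G}$ forces $(E_2\cup\{x\})\cap(E_1\cup\{n\})\neq\emptyset$. But $E_2\cap E_1=\emptyset$ by assumption, $E_2\cap\{n\}=\emptyset$ and $\{x\}\cap\{n\}=\emptyset$ because $E_2,\{x\}\subseteq[n-1]$, and $\{x\}\cap E_1=\emptyset$ by the choice of $x$; hence this intersection is empty, a contradiction.

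The argument is short and I do not anticipate a serious obstacle. The only points requiring care are that the hypothesis $n\geq 2k+t$ is used precisely to guarantee a free element $x$ in $[n-1]$ (this is exactly where the bound is tight), and that shiftedness is applied to $\mathcal{G}$ (equivalently one could shift the set in $\mathcal{F}$, replacing $n$ by $x$ there instead) so that the swapped set lands back in the family while all four pairwise intersections above vanish. If $\mathcal{F}(n)$ or $\mathcal{G}(n)$ happens to be empty, the conclusion holds vacuously, so assuming both are nonempty loses no generality.
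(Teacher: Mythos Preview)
Your proof is correct and follows essentially the same approach as the paper: find a free coordinate $x\in[n-1]\setminus(E_1\cup E_2)$ using the bound $n\geq 2k+t$, then use shiftedness to replace $n$ by $x$ and invoke cross-intersecting. The only cosmetic differences are that the paper argues directly rather than by contradiction and applies the shift to the set in $\mathcal{F}$ rather than $\mathcal{G}$ (an alternative you already noted).
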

\begin{proof}
The case for $\mathcal{F}(n)=\emptyset$ or $\mathcal{G}(n)=\emptyset$ is trivial. If $\mathcal{F}(n)\neq\emptyset$ and $\mathcal{G}(n)\neq\emptyset$, then there are $F_1\in \mathcal{F}(n)$ and $G_1\in \mathcal{G}(n)$ such that $F_1\cup\{n\}\in \mathcal{F}$ and $G_1\cup\{n\}\in \mathcal{G}$.  Observe that
$
|F_1\cup G_1\cup\{n\}|\leq 2k+t-1\leq n-1.
$
So there exists $x\notin F_1\cup G_1\cup\{n\}$ such that $F_1\cup\{x\} \in \mathcal{F}$. Then we have
$
|F_1\cap G_1|=|(F_1\cup\{x\})\cap (G_1\cup\{n\})|\geq 1,
$
as desired.
\end{proof}

We shall mention that the second statement in Lemma \ref{S1} and the result in Lemma \ref{S4}  appear in \cite{F16} as Claims $1$ and $2$, respectively.

Next we define the lexicographic order on the $k$-element subsets of $[n]$. We say that $F$ is smaller than $G$ in the lexicographic order if $\min (F \backslash G)<\min (G \backslash F)$ holds. For $0\leq m\leq \binom{n}{k}$, let $\mathcal{L}(n, k, m)$ be the family of the first $m$ $k$-sets in the lexicographic order. 
The following lemma will be used in the next section, for convenience, we list it below;  
see \cite[p.266]{FK2017} for a detailed proof.

\begin{lemma}[See \cite{H76}] \label{Ln}
Let $k, \ell, n$ be positive integers with $n> k+\ell$. If $\mathcal{F} \subseteq\binom{[n]}{k}$ and $\mathcal{G} \subseteq\binom{[n]}{\ell}$ are cross-intersecting, then $\mathcal{L}(n, k,|\mathcal{F}|)$ and $\mathcal{L}(n, \ell,|\mathcal{G}|)$ are cross-intersecting.
\end{lemma}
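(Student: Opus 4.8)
The plan is to recast the cross-intersecting condition as a statement about shadows of complements and then reduce the whole lemma to the Kruskal--Katona theorem. Write $\kappa(S)=[n]\setminus S$ for the complement of a single set, and $\overline{\mathcal F}=\{\kappa(F):F\in\mathcal F\}\subseteq\binom{[n]}{n-k}$. The basic observation is that $F\cap G=\emptyset$ is the same as $G\subseteq\kappa(F)$. Since the hypothesis $n>k+\ell$ forces $\ell<n-k$, every $G\in\binom{[n]}{\ell}$ can sit inside an $(n-k)$-set, and the pair $\mathcal F,\mathcal G$ is cross-intersecting precisely when $\mathcal G$ is disjoint from $\partial_\ell(\overline{\mathcal F})$, where I let $\partial_\ell(\mathcal A)$ denote the family of all $\ell$-element subsets contained in some member of $\mathcal A$ (the iterated lower shadow down to level $\ell$). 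Consequently $|\mathcal G|\le\binom{n}{\ell}-|\partial_\ell(\overline{\mathcal F})|$, and the problem is reduced to making this shadow as small as possible for a fixed value of $|\mathcal F|=|\overline{\mathcal F}|$.

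Next I would invoke Kruskal--Katona applied to $\overline{\mathcal F}$: among all subfamilies of $\binom{[n]}{n-k}$ of a fixed size, an initial segment of the colexicographic order minimizes every lower shadow, and the lower shadow of a colex initial segment is again a colex initial segment. The only real work is transporting this back to the lexicographic order on $\binom{[n]}{k}$. For that I would use the order-reversing relabeling $\rho(S)=\{n+1-s:s\in S\}$ together with $\kappa$: the map $\tau=\rho\circ\kappa$ is an involution carrying the lex order on $\binom{[n]}{k}$ onto the colex order on $\binom{[n]}{n-k}$. Because $\rho$ is induced by a permutation of $[n]$ it preserves all shadow sizes, so $|\partial_\ell(\overline{\mathcal F})|$ is minimized exactly when $\tau(\mathcal F)$ is colex initial, that is, when $\mathcal F=\mathcal L(n,k,|\mathcal F|)$. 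Moreover, since the shadow of a colex initial segment is colex initial, applying $\rho$ once more shows that for $\mathcal F=\mathcal L(n,k,|\mathcal F|)$ the forbidden family $\partial_\ell(\overline{\mathcal F})$ is a lex \emph{final} segment of $\binom{[n]}{\ell}$; hence the permitted region $\binom{[n]}{\ell}\setminus\partial_\ell(\overline{\mathcal F})$ is itself a lex \emph{initial} segment.

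Combining these two points finishes the proof. For arbitrary cross-intersecting $\mathcal F,\mathcal G$ we have $|\mathcal G|\le\binom{n}{\ell}-|\partial_\ell(\overline{\mathcal F})|\le\binom{n}{\ell}-|\partial_\ell(\overline{\mathcal L(n,k,|\mathcal F|)})|$, and the right-hand side is exactly the size of the lex initial permitted region of $\mathcal L(n,k,|\mathcal F|)$. Therefore $\mathcal L(n,\ell,|\mathcal G|)$ lies inside that region, so $\mathcal L(n,k,|\mathcal F|)$ and $\mathcal L(n,\ell,|\mathcal G|)$ are cross-intersecting, which is the assertion.

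The main obstacle, and the step I would write out most carefully, is precisely the order bookkeeping of the second paragraph: checking that $\rho\circ\kappa$ interchanges the lexicographic and colexicographic orders, and that as a result the extremal configuration is simultaneously an initial lex segment for $\mathcal F$ and leaves an initial lex segment available for $\mathcal G$ (it is easy to flip lex and colex, or an initial and a final segment, by accident). A cleaner but less self-contained route is to first apply Lemma \ref{S3} to replace $\mathcal F,\mathcal G$ by their simultaneously shifted images, which changes neither their sizes nor their cross-intersecting property, and then to prove lex-optimality for shifted families by induction on $n$, splitting each family according to whether it contains $n$ as in Lemma \ref{S4}; the inductive step still rests on the Kruskal--Katona bound, so the essential difficulty is unchanged.
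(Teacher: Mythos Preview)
The paper does not prove Lemma~\ref{Ln}; it simply quotes the statement and points to \cite{H76} and \cite[p.~266]{FK2017} for a proof. So there is nothing in the paper to compare your argument against.

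That said, your proposal is correct and is exactly the standard route: translate cross-intersecting into ``$\mathcal G$ avoids $\partial_\ell(\overline{\mathcal F})$'', invoke Kruskal--Katona to minimise that shadow, and then do the lex/colex bookkeeping via the involution $\rho\circ\kappa$. Your verification that $\rho\circ\kappa$ carries lex-initial $k$-sets to colex-initial $(n-k)$-sets, and that the resulting forbidden shadow is a lex-\emph{final} segment of $\binom{[n]}{\ell}$, is accurate; this is indeed the step where sign errors most often creep in, so your caution is well placed. The alternative shifting-plus-induction outline you sketch at the end is also a known proof, and either would be acceptable here.
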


\noindent{\bf Proof of Theorem \ref{51}.}
For fixed $k$ and $t$, we apply induction on  $n\geq 2 k+t$.
First let us consider the base case $n=2 k+t$. For any $F \in\binom{[2k+t]}{k+t}$, the cross-intersecting property of $\mathcal{F}$ and $\mathcal{G}$ implies that $F \notin \mathcal{F}$ or $[2 k+t] \backslash F \notin \mathcal{G}$.
 Hence, at most half of the sets of $\binom{[2k+t]}{k+t}\cup \binom{[2k+t]}{k}$ belong to $\mathcal{F}\cup\mathcal{G}$.
Then
$|\mathcal{F}|+|\mathcal{G}| \leq\binom{2k+t}{k}$.
 Note that $\binom{2k+t}{k}-\binom{2k+t-k-t}{k}-\binom{2k+t-k-t-1}{k-1}-\binom{2k+t-k-t-2}{k-2}+3=\binom{2k+t}{k}$. Thus the result holds.

Now suppose that $n \geq 2 k+t+1$ and that the result holds for integers less than $n$ and fixed $k$ and $t$. By Lemma \ref{S3},  we can apply all shifting operations to $\mathcal{F}$ and $\mathcal{G}$, the resulting families $\mathcal{F}^{\prime}$ and $\mathcal{G}^{\prime}$ are shifted and cross-intersecting. Moreover,   $\mathcal{F}^{\prime}$ is $(t+1)$-intersecting.
Thus we may assume that $\mathcal{F}$ and $\mathcal{G}$ are shifted.
It follows from Lemma \ref{S2} that $|\mathcal{F}(\bar{n})| \geq 3$. Clearly, $\mathcal{F}(\bar{n})$ is $(t+1)$-intersecting. Note that $\mathcal{F}(\bar{n})$ and
$\mathcal{G}(\bar{n})$ are cross-intersecting.  By the  induction hypothesis, we get
$$
|\mathcal{F}(\bar{n})|+|\mathcal{G}(\bar{n})| \leq\binom{n-1}{k}-\binom{n-k-t-1}{k}-\binom{n-k-t-2}{k-1}-\binom{n-k-t-3}{k-2}+3.
$$

We  continue the proof by considering the following two cases.

{\bf Case 1.} Suppose that $|\mathcal{F}(n)|=0$.

 Since $|\mathcal{F}(\bar{n})| \geq 3$, there exist $F_1, F_2, F_3 \in \mathcal{F}(\bar{n})=\mathcal{F}$.
For any permutation $\sigma$ on $[3]$, we consider the following families:
\begin{align*}
\mathcal{G}_{\sigma(1)}&=\left\{G \in\binom{[n-1]}{k-1}: G \cap F_{\sigma(1)}=\emptyset\right\}, \\
 \mathcal{G}_{\sigma(2)} &=\left\{G \in\binom{[n-1]}{k-1}: G \cap F_{\sigma(1)} \neq \emptyset, G \cap F_{\sigma(2)}=\emptyset\right\},\\
 \mathcal{G}_{\sigma(3)}&=\left\{G \in\binom{[n-1]}{k-1}: G \cap F_{\sigma(1)} \neq \emptyset, G \cap F_{\sigma(2)}\neq\emptyset, G \cap F_{\sigma(3)}=\emptyset\right\}.
\end{align*}
 By the definition of $\mathcal{G}_{\sigma(1)}$, we have $|\mathcal{G}_{\sigma(1)}|=\binom{n-k-t-1}{k-1}$ as $F_{\sigma(1)}\in\binom{[n-1]}{k+t}$.
 Since $\mathcal{F}_{\sigma(1)}\neq \mathcal{F}_{\sigma(2)}$, there exists $x\in \mathcal{F}_{\sigma(1)}$ such that $x\notin \mathcal{F}_{\sigma(2)}$.
 Then $\left\{G \in\binom{[n-1]}{k-1}: x\in G, G \cap F_{\sigma(2)}=\emptyset\right\}
 \subseteq \mathcal{G}_{\sigma(2)}
 $
 and hence 
 $|\mathcal{G}_{\sigma(2)}|\geq\binom{n-k-t-2}{k-2}$. In addition, there exist $x_1\in \mathcal{F}_{\sigma(1)}$ and $ x_2\in \mathcal{F}_{\sigma(2)}$ such that $x_1, x_2\notin F_{\sigma(3)}$. If $x_1\neq x_2$, then $|\mathcal{G}_{\sigma(3)}|\geq\binom{n-k-t-3}{k-3}$; if $x_1= x_2$, then $|\mathcal{G}_{\sigma(3)}|\geq\binom{n-k-t-2}{k-2}$. So $|\mathcal{G}_{\sigma(3)}|\geq\binom{n-k-t-3}{k-3}$ holds in all cases.
Since $\mathcal{G}_{\sigma(i)}\cap \mathcal{G}_{{\sigma(j)}}=\emptyset$ for any $i\neq j$ and $\mathcal{F}=\mathcal{F}(\bar{n})$ and $\mathcal{G}(n)$ are cross-intersecting, we have
\begin{align*}
|\mathcal{G}(n)| \leq\binom{n-1}{k-1}-\sum_{j=1}^3|\mathcal{G}_{{\sigma(j)}}|\leq \binom{n-1}{k-1}-\binom{n-k-t-1}{k-1}-\binom{n-k-t-2}{k-2}-\binom{n-k-t-3}{k-3}.
\end{align*}
Next suppose that the equality holds. Then $\mathcal{F}(\bar{n})=\left\{F_1, F_2, F_3\right\}$ and for any permutation $\sigma$ on $[3]$, we have $|\mathcal{G}_{\sigma(2)}|=\binom{n-k-t-2}{k-2}$ and $|\mathcal{G}_{\sigma(3)}|=\binom{n-k-t-3}{k-3}$. If there exist $i\neq j\in [3]$ such that $\left|F_i \cap F_j\right|\leq k+t-2$, then let $\sigma=(1 i)(2 j)$. So there exist $y_1\neq y_2\in \mathcal{F}_{\sigma(1)}$ such that $y_1, y_2\notin F_{\sigma(2)}$. This implies that 
$\left\{G \in\binom{[n-1]}{k-1}: y_1\in G \text{ or } y_2\in G, G \cap F_{\sigma(2)}=\emptyset\right\}
 \subseteq \mathcal{G}_{\sigma(2)}
 $. Thus $|\mathcal{G}_{\sigma(2)}|\geq\binom{n-k-t-2}{k-2}+\binom{n-k-t-3}{k-2}$, a contradiction. Thus $\mathcal{F}(\bar{n})=\left\{F_1, F_2, F_3\right\}$ with $|F_{1}\cap F_{2}\cap F_{3}|=k+t-1$. Otherwise we can write $F_1=M\cup\{a, b\}, F_2=M\cup\{a, c\}, F_3=M\cup\{b, c\}$, where $|M|=k+t-2$ and $a, b, c$ are pairwise different.
 By symmetry, we may assume that $F_i=F_{\sigma(i)}$ for $i\in [3]$.
Then
$\left\{G \in\binom{[n-1]}{k-1}: a\in G, G \cap F_{\sigma(3)}=\emptyset\right\}
 \subseteq \mathcal{G}_{\sigma(3)}
 $.
Hence, for any permutation $\sigma$ on $[3]$, we have $|\mathcal{G}_{\sigma(3)}|\geq\binom{n-k-t-2}{k-2}$, a contradiction.
Therefore, by the shiftedness of $\mathcal{F}$, we conclude that the equality holds if and only if $\mathcal{F}(\bar{n})=\left\{F_1, F_2, F_3\right\}$ with $F_i:=[k+t-1]\cup \{k+t-1+i\}$ for each $i\in [3]$ and $\mathcal{G}(n)=\left\{G \in\binom{[n-1]}{k-1}: G \cap F_i \neq\emptyset \text{ for any } i=1, 2, 3\right\}$. 
It follows that
$$
|\mathcal{F}|+|\mathcal{G}|=|\mathcal{F}(\bar{n})|+|\mathcal{G}(\bar{n})|+|\mathcal{G}(n)|\leq \binom{n}{k} \!-\! 
\binom{n\!-\!k\!-\!t}{k} \!-\! 
\binom{n\!-\!k\!-\!t\!-\!1}{k-1} \!-\! 
\binom{n\!-\!k\!-\!t\!-\!2}{k-2}+3.
$$
The equality holds if and only if $\mathcal{F}=\left\{F_1, F_2, F_3\right\}$ with $F_i:=[k+t-1]\cup \{k+t-1+i\}$ for each $i\in [3]$ and $\mathcal{G}=\left\{G \in\binom{[n]}{k}: G \cap F_i \neq\emptyset \text{ for any } i\in[3]\right\}$.

{\bf Case 2.} Suppose that $|\mathcal{F}(n)|\geq1$.

By Lemma \ref{S4}, $\mathcal{F}(n)\subseteq\binom{[n-1]}{k+t-1}$ and $\mathcal{G}(n)\subseteq\binom{[n-1]}{k-1}$ are cross-intersecting. By Lemma \ref{S1}, $\mathcal{F}(n)$ is $(t+1)$-intersecting .
Since $n \geq 2 k+t+1$ and $k-1\geq 3$, applying Theorem \ref{F16} to $\mathcal{F}(n)$ and $\mathcal{G}(n)$ yields
$$
|\mathcal{F}(n)|+|\mathcal{G}(n)| \leq\binom{n-1}{k-1}-\binom{n-k-t}{k-1}+1.
$$
 Since $k-1\geq 3$, by Theorem \ref{F16}, the equality holds if and only if $\mathcal{F}(n)=\left\{F_1\right\}$ for some $F_1\in\binom{[n-1]}{k+t-1}$ and $\mathcal{G}=\left\{G \in\binom{[n-1]}{k-1}: G \cap F_1 \neq \emptyset \right\}$. Then the shiftedness of $\mathcal{F}$ leads to $\mathcal{F}(n)=\left\{[k+t-1]\right\}$ and $\mathcal{G}(n)=\left\{G^* \in\binom{[n-1]}{k-1}: G^* \cap [k+t-1] \neq \emptyset \right\}$. Note that $\mathcal{F}(\bar{n})$ and $\mathcal{G}(n)$ are cross-intersecting.
By the shiftedness of $\mathcal{F}$, if $\mathcal{F}(n)=\left\{[k+t-1]\right\}$ and $\mathcal{G}(n)=\left\{G^* \in\binom{[n-1]}{k-1}: G^* \cap [k+t-1] \neq \emptyset \right\}$, 
then $\mathcal{F}(\bar{n})=\left\{[k+t-1]\cup\{i\}:  i \in\{k+t, \ldots, n-1\}\right\}$, $\mathcal{G}(\bar{n})=\left\{G \in\binom{[n-1]}{k}: G \cap [k+t-1] \neq \emptyset \right\}$.

Therefore, we conclude that
\begin{align*}
|\mathcal{F}|+|\mathcal{G}|
&=|\mathcal{F}(\bar{n})|+|\mathcal{G}(\bar{n})|+|\mathcal{F}(n)|+|\mathcal{G}(n)| \\
& \leq\binom{n-1}{k}-\binom{n-k-t-1}{k}-\binom{n-k-t-2}{k-1}
-\binom{n-k-t-3}{k-2}+3 \\ 
& \quad +\binom{n-1}{k-1}-\binom{n-k-t}{k-1}+1\\
&=\binom{n}{k} \!-\! \binom{n-k-t}{k}\!-\!\binom{n-k-t-1}{k-1} 
\!-\! \binom{n-k-t-2}{k-2}+3+1\!-\! \binom{n-k-t-3}{k-4}\\
&\leq \binom{n}{k}-\binom{n-k-t}{k}-\binom{n-k-t-1}{k-1}-\binom{n-k-t-2}{k-2}+3,
\end{align*}
 where the second equality holds by applying the Pascal identity five times.
Moreover, if all the  equalities above  hold, then $1=\binom{n-k-t-3}{k-4}$, i.e., $k=4$, and $\mathcal{F}=\{[t+3] \cup\{i\}: i \in\{t+4, \ldots, n\}\}$ and $\mathcal{G}=\left\{G \in\binom{[n]}{4}: G \cap[t+3] \neq \emptyset\right\}$.

In the above discussion, we have determined the extremal families 
$\mathcal{F}$ and $\mathcal{G}$ that attain the required upper bound  under the shifting assumption by Lemma \ref{S3}. 
For completeness, we also need to characterize the extremal families in general case. 
Assume that  the extremal families $\mathcal{F}^{\prime}$ and $\mathcal{G}^{\prime}$ satisfy $s_{i, j}\left(\mathcal{F}^{\prime}\right)=\mathcal{F}$ and $s_{i,j}\left(\mathcal{G}^{\prime}\right)=\mathcal{G}$.
Note that $i<j$.
When $\mathcal{F}=\left\{F_1,F_2,F_3 \right\}$ with 
$F_i:=[k+t-1]\cup \{k+t-1+i\}$ for each $i\in [3]$, if $i,j\in [k+t-1]$, then $\mathcal{F}^{\prime}=\mathcal{F}$.
If $i\in [k+t-1]$ and $j\in [k+t, k+t+2]$, by symmetry, we assume that $i=1$ and $j=k+t$, then $\mathcal{F}^{\prime}=\{[2,k+t\}\cup\{q\}: q\in\{1,k+t+1, k+t+2\}\}$. Thus $\mathcal{F}^{\prime} \cong\mathcal{F}$. If $i\in [k+t-1]$ and $j\notin [1, k+t+2]$, we may assume that $i=1$, then 
$\mathcal{F}^{\prime}=\{[2,k+t-1\}\cup\{q,j\}: q\in[k+t, k+t+2]\}$. Thus  $\mathcal{F}^{\prime}\cong\mathcal{F}$. 
If $i,j\in[k+t, k+t+2]$, then $\mathcal{F}^{\prime}=\mathcal{F}$.
If $i\in[k+t, k+t+2]$ and $j\notin [1, k+t+2]$, we may assume that $i=k+t$ and $j=k+t+3$, then $\mathcal{F}^{\prime}=\{[1,k+t-1\}\cup\{q\}: q\in[k+t+1, k+t+3]\}$.  Thus  $\mathcal{F}^{\prime}\cong\mathcal{F}$.
If $i,j\notin [1, k+t+2]$, then $\mathcal{F}^{\prime} =\mathcal{F}$. 
Therefore, we conclude that in the case $\mathcal{F}=\left\{F_1,F_2,F_3 \right\}$ with 
$F_i:=[k+t-1]\cup \{k+t-1+i\}$ for each $i\in [3]$, we have $\mathcal{F}^{\prime} \cong\mathcal{F}$. The family $\mathcal{G}^{\prime}$ is just the maximal subfamily of $\binom{[n]}{k}$ that is cross-intersecting with $\mathcal{F}^{\prime}$.
The same is true when $\mathcal{F}$ is the other case.
Let $\mathcal{F}_0$ and $\mathcal{G}_0$ be the original families before applying the shifting operations. 
In other words, $\mathcal{F}$ and $ \mathcal{G}$ are obtained from $\mathcal{F}_0 $ and $ \mathcal{G}_0$ by applying a series of shifting operations.
Then we have $\mathcal{F}_0=\mathcal{F}$ and $\mathcal{G}_0=\mathcal{G}$, under isomorphism. 
$\hfill \square$

\section{Revisiting stability of intersecting families}

\label{sec-3}

As promised, we will present a unified framework to 
revisit several stability results for the Erd\H{o}s--Ko--Rado theorem.  
To begin with, we fix the following notation. 
\begin{align*}
\mathcal{F}(i,j)&=\left\{F\backslash \{i,j\}: i, j\in F \in \mathcal{F}\right\},\\
\mathcal{F}(i,\bar{j})&=\mathcal{F}(\bar{j},i)=\left\{F\backslash \{i\}: i \in F, j\notin F, F \in \mathcal{F}\right\},\\ \mathcal{F}(\bar{i},\bar{j})&=\left\{F: i, j\notin F \in \mathcal{F}\right\}.
\end{align*}

\subsection{Short proof of the Hilton--Milner theorem}

To more clearly illustrate our approach, 
we first give a short new proof of Hilton--Milner's bound in Theorem \ref{H67}. 
It is worth mentioning that Frankl and Tokushige \cite{FT92} provided a short proof of the bound in Theorem \ref{H67} by using the Katona theorem on shadows for intersecting families \cite{K64}. Different from their approach, 
we here present a short proof of Theorem \ref{H67} by using Theorem \ref{F16} only,  avoiding the use of the Katona intersecting shadow theorem. 

\vspace{3mm}
\noindent{\bf Proof of Theorem \ref{H67}.}
Suppose that  $\mathcal{F}$ is not EKR and $|\mathcal{F}|$ is maximal. As is well-known, the shifting operation preserves the intersecting property. 
Applying shifting operations to $\mathcal{F}$ repeatedly, 
we can arrive at the following two cases: 
\begin{itemize} 
\item[(A)]
After applying all shifting operations, 
we get a shifted family that is not EKR. 

\item[(B)] 
After some shifting operations, 
 at some point, the resulting family is EKR. 
 \end{itemize}

{\bf Case (A).} Suppose that $\mathcal{H}$  is obtained from $\mathcal{F}$ by applying all shifts and  $\mathcal{H}$ is not EKR. 
Then we obtain $\mathcal{H}(\bar{1})\neq \emptyset$.
By Lemma \ref{S1}, $\mathcal{H}(\bar{1})$ is 2-intersecting. Note that $\mathcal{H}(\bar{1}) \subseteq\binom{[2,n]}{k}$ and $\mathcal{H}(1) \subseteq\binom{[2,n]}{k-1}$ are  cross-intersecting. Applying Theorem \ref{F16} (by setting $t=1$), we get
\begin{equation*} \label{eq-HM}
 |\mathcal{F}| = 
|\mathcal{H}|=|\mathcal{H}(\bar{1})|+|\mathcal{H}(1)| \leq\binom{n-1}{k-1}-\binom{n-k-1}{k-1}+1.
\end{equation*}
Moreover, the above equality holds if and only if 
$\mathcal{H}(\bar{1})=\{[2,k+1]\}$ and $\mathcal{H}(1)=\{H\in {[2,n] \choose k-1}:H\cap [2,k+1]\neq \emptyset \}$, 
or in the case $k=3$, there is one more possibility: $\mathcal{H}(\bar{1})=\big\{ \{2,3\}\cup \{i\}: 4\le i\le n \big\}$ and 
$\mathcal{H}(1)=\{H\in {[2,n] \choose 2}: H\cap \{2,3\}\neq \emptyset\}$. Consequently, we get $\mathcal{H}=\mathcal{HM}(n, k)$, or $\mathcal{H}=\mathcal{T}(n, 3)$. 
It is easy to check that if $\mathcal{H}^{\prime}$ is an extremal family and $s_{i,j}(\mathcal{H}^{\prime})=\mathcal{H}$, then $\mathcal{H}^{\prime}\cong \mathcal{H}$.

{\bf Case (B).}
Let $\mathcal{G} \subseteq\binom{[n]}{k}$ be obtained from $\mathcal{F}$ by applying some shifting operations such that $\mathcal{G}$  is not EKR,  but after the shifting operation $s_{i,j}$, the resulting family $s_{i,j}(\mathcal{G})$ is EKR. 
In this case, all sets of $s_{i, j}(\mathcal{G})$  contain the element $i$, 
it follows that $\mathcal{G}(\bar{i},\bar{j})= \emptyset$ and  hence   $\{i,j\}\cap G\neq \emptyset$ for every $G\in \mathcal{G}$.  
Moreover, $\mathcal{G}(\bar{i},j)\cap \mathcal{G}(i,\bar{j})=\emptyset$ holds.
 By the maximality of $|\mathcal{G}|$, we have
$\mathcal{G}(i,j)=\binom{[n]\backslash \{i,j\}}{k-2}$.  Since $\mathcal{G}$ is not EKR, we get that
$\mathcal{G}(i,\bar{j})$ and $\mathcal{G}(\bar{i},j)$ are non-empty.
Note that $\mathcal{G}(i,\bar{j})\subseteq\binom{[n]\backslash \{i,j\}}{k-1}$ and $\mathcal{G}(\bar{i},j)\subseteq\binom{[n]\backslash \{i,j\}}{k-1}$ are cross-intersecting.
So Theorem \ref{mainH} can be applied. 
Since $\mathcal{G}(\bar{i},j)\cap \mathcal{G}(i,\bar{j})=\emptyset$,  the inequality in Theorem \ref{mainH} holds strictly. Then 
\begin{align*}
|\mathcal{F}|=|\mathcal{G}|&=|\mathcal{G}(i,j)|+|\mathcal{G}(i,\bar{j})|+|\mathcal{G}(\bar{i}, j)| \\ 
&< \binom{n-2}{k-2}+\binom{n-2}{k-1}-\binom{n-k-1}{k-1} + 1\\
&=\binom{n-1}{k-1}-\binom{n-k-1}{k-1}+1,
\end{align*}
as desired. This completes the proof of Theorem \ref{H67}.$\hfill \square$

\medskip 
\noindent
{\bf Remark.} After submitting this paper, we are informed by Bulavka and Woodroofe \cite{BW2024} that their recent work on cross-intersecting families also yields a short proof of the Hilton--Milner theorem. We remark here that the main result  \cite[Theorem 2]{BW2024} is a direct consequence of Theorem \ref{F16}. To some extent, our framework used in the above proof surprisingly develops a unified approach for deriving deep stability results for intersecting families, not just the Hilton--Milner theorem. For instance, we will present more potential applications of our approach  in next two subsections.

\subsection{Short proof of the Han--Kohayakawa theorem}
To make the proof clearer, we divide it into case $k\geq 4$ and  case $k=3$ (See Appendix  \ref{App-A}). Here we consider the case $k\geq 4$ and present some tools.   Note that the case when $k=3$ is in the appendix by something along the lines of this subsection.

For two integers $k, \ell$ with $n \geq k+\ell$ and a family $\mathcal{F} \subseteq\binom{[n]}{k}$, we define
$$
\mathcal{D}_{\ell}(\mathcal{F})=\left\{D \in\binom{[n]}{\ell}: \exists F \in \mathcal{F}, D \cap F=\emptyset\right\}.
$$
Then  $\mathcal{F}$ and $\mathcal{G} \subseteq\binom{[n]}{\ell}$  are cross-intersecting if and only if $\mathcal{F} \cap \mathcal{D}_k(\mathcal{G})=\emptyset$ or equivalently $\mathcal{G} \cap \mathcal{D}_{\ell}(\mathcal{F})=\emptyset$. Moreover, for given $\mathcal{F}$, $\mathcal{G}=\binom{[n]}{\ell} \backslash \mathcal{D}_{\ell}(\mathcal{F})$ is the largest $\ell$-uniform  family that is cross-intersecting with $\mathcal{F}$, and for given $\mathcal{G}$, $\mathcal{F}=\binom{[n]}{k} \backslash \mathcal{D}_{k}(\mathcal{G})$ is the largest  $k$-uniform   family that is cross-intersecting with $\mathcal{G}$.

We need the following lemma to characterize the uniqueness of extremal families.

\begin{lemma}[See \cite{F86, M85}] \label{FM}
Let $n> k+\ell, \mathcal{F} \subseteq\binom{[n]}{k}$ and 
$ |\mathcal{F}|=\binom{n-r}{k-r}$ for some $r \leq k$. Then
$
\left|\mathcal{D}_{\ell}(\mathcal{F})\right| \geq\binom{ n-r}{\ell},
$
with strict inequality unless $\mathcal{F}=\big\{F \in\binom{[n]}{k}: R \subseteq F\big\}$ for some $R \in \binom{[n]}{r}$.
\end{lemma}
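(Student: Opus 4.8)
The plan is to reduce the statement to the classical Kruskal--Katona theorem by passing to complements. The key observation is that for a $k$-set $F$ and an $\ell$-set $D$ one has $D\cap F=\emptyset$ if and only if $D\subseteq[n]\setminus F$. Hence, writing $\mathcal{F}^{c}=\{[n]\setminus F:F\in\mathcal{F}\}\subseteq\binom{[n]}{n-k}$, the family $\mathcal{D}_{\ell}(\mathcal{F})$ coincides with the $\ell$-th (lower) shadow $\partial_{\ell}(\mathcal{F}^{c}):=\{D\in\binom{[n]}{\ell}:D\subseteq C \text{ for some } C\in\mathcal{F}^{c}\}$. Since complementation is a bijection, $|\mathcal{F}^{c}|=|\mathcal{F}|=\binom{n-r}{k-r}=\binom{n-r}{n-k}$, where the last equality uses $\binom{a}{b}=\binom{a}{a-b}$.

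Next I would apply the Lov\'asz form of the Kruskal--Katona theorem to $\mathcal{F}^{c}$, whose members have size $m:=n-k$. Writing the cardinality as $\binom{x}{m}$ with the integer $x=n-r$ (and $x\ge m$, since $r\le k$), the immediate-shadow bound $|\partial(\mathcal{F}^{c})|\ge\binom{x}{m-1}$ iterates down, using $\partial_{\ell}(\mathcal{F}^{c})=\partial(\partial_{\ell+1}(\mathcal{F}^{c}))$ and monotonicity of the bound, to give $|\partial_{\ell}(\mathcal{F}^{c})|\ge\binom{x}{\ell}=\binom{n-r}{\ell}$ for every $\ell\le m$; note that $\ell<n-k=m$ because $n>k+\ell$. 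Through the identification above, this is exactly the asserted inequality $|\mathcal{D}_{\ell}(\mathcal{F})|\ge\binom{n-r}{\ell}$.

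For the equality case I would invoke the rigidity statement that accompanies Kruskal--Katona when the cardinality is an exact binomial coefficient $\binom{x}{m}$ with integer $x$: tightness in the shadow bound forces $\mathcal{F}^{c}$ to be the complete family $\binom{S}{n-k}$ of all $(n-k)$-subsets of some fixed $(n-r)$-set $S$. Concretely, equality at level $\ell$ propagates upward to level $m-1$, and the top-level uniqueness then pins down the complete structure. Translating back through complementation, setting $R=[n]\setminus S\in\binom{[n]}{r}$ yields $\mathcal{F}=\{F\in\binom{[n]}{k}:R\subseteq F\}$, and a direct check confirms this family attains $|\mathcal{D}_{\ell}(\mathcal{F})|=\binom{n-r}{\ell}$.

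The main obstacle is the uniqueness half rather than the inequality: the bound itself drops out of the Lov\'asz estimate in essentially one line, whereas verifying that equality can hold \emph{only} for the complete star $\binom{S}{n-k}$ requires the sharper rigidity version of Kruskal--Katona, which is precisely the content of the cited works \cite{F86, M85}. An alternative, self-contained route would be an induction on $n$ using the shifting operator of Section \ref{sec-2}: shifting does not increase the shadow, so one may assume $\mathcal{F}^{c}$ shifted and then split according to whether the element $n$ lies in a given set; the delicate point there would again be tracking exactly when each inequality is tight, so as to recover the extremal structure.
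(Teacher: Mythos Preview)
The paper does not supply its own proof of Lemma~\ref{FM}; it is quoted as a known result from \cite{F86,M85} and used as a black box in the proof of Lemma~\ref{W231}. So there is no ``paper's proof'' to compare against.

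Your proposal is correct and is in fact the standard route to this statement. The identification $\mathcal{D}_\ell(\mathcal{F})=\partial_\ell(\mathcal{F}^c)$ via complementation is exactly right, and the Lov\'asz form of Kruskal--Katona then gives the inequality immediately, with the iteration from level $n-k$ down to level $\ell$ working because $n>k+\ell$ ensures $\ell<n-k$. Your assessment of the difficulty is also accurate: the inequality is cheap, while the uniqueness clause is precisely the content of F\"uredi--Griggs \cite{F86} and M\"ors \cite{M85}, who established that equality in the shadow bound at an integer value $\binom{x}{m}$ forces the family to be a full $\binom{S}{m}$. Your remark that equality at level $\ell$ propagates upward (since a strict gap at any intermediate stage would persist downward by monotonicity of the Lov\'asz bound) is the right mechanism for reducing to the single-step rigidity. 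The alternative shifting-based induction you sketch is also viable and is closer in spirit to how \cite{F86} actually argues, but as you note the bookkeeping for the equality case is where the work lies.
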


The following lemma is a direct consequence of Theorem \ref{W23}. 

\begin{lemma}\label{W231}
Let $k \geq 3$ and $n \geq 2 k$ be positive integers.  Let $\mathcal{F} \subseteq\binom{[n]}{k}$ and $\mathcal{G} \subseteq\binom{[n]}{k}$ be  cross-intersecting families. Suppose that $|\mathcal{G}| \geq|\mathcal{F}| \geq 2$, then
$$
|\mathcal{F}|+|\mathcal{G}| \leq\binom{n}{k}-\binom{n-k}{k}-\binom{n-k-1}{k-1}+2.
$$
For $n > 2 k$,  the equality holds if and only if $\mathcal{F}=\left\{F_1, F_2\right\}$ for some $F_1, F_2 \in\binom{[n]}{k}$ with $\left|F_1 \cap F_2\right|=k-1$ and $\mathcal{G}=\left\{G \in\binom{[n]}{k}: G \cap F_1 \neq \emptyset \text{ and } G \cap F_2 \neq \emptyset\right\}$, or two more possibilities when $k=3$, namely, $\mathcal{F}=\{[2] \cup\{i\}: i \in\{3, 4, \ldots, n\}\}\text{ and } \mathcal{G}=\left\{G \in\binom{[n]}{3}: G \cap[2] \neq \emptyset\right\}$, or $\mathcal{F}=\mathcal{G}=\left\{G \in\binom{[n]}{3}: 1\in G \right\}$, under isomorphism.
\end{lemma}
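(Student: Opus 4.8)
The bound to be proved is exactly the $t=0$ instance of Theorem \ref{W23}, so the only real issue is that Theorem \ref{W23} asks $\mathcal{F}$ to be intersecting, whereas here I am given only the size condition $|\mathcal{G}|\ge|\mathcal{F}|\ge 2$. The plan is therefore to convert the size hypothesis into the intersecting hypothesis, and the cleanest way to do this is to pass to lexicographic families. I would dispose of the boundary $n=2k$ first (there the right-hand side collapses to $\binom{n}{k}$ and the inequality is immediate from cross-intersection), and assume $n>2k$ from now on.

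For the inequality, I would invoke Lemma \ref{Ln} to replace $\mathcal{F},\mathcal{G}$ by the initial segments $\mathcal{F}_0=\mathcal{L}(n,k,|\mathcal{F}|)$ and $\mathcal{G}_0=\mathcal{L}(n,k,|\mathcal{G}|)$, which are again cross-intersecting and have the same cardinalities, so $|\mathcal{F}|+|\mathcal{G}|$ is unchanged. The key claim is that $|\mathcal{F}|\le\binom{n-1}{k-1}$. Recall that the first $\binom{n-1}{k-1}$ sets in the lexicographic order are exactly the $k$-sets containing $1$, and that the next one is the lexicographically smallest set avoiding $1$, namely $\{2,3,\dots,k+1\}$. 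If instead $|\mathcal{F}|>\binom{n-1}{k-1}$, then $\mathcal{F}_0\ni\{2,3,\dots,k+1\}$, while $|\mathcal{G}|\ge|\mathcal{F}|>\binom{n-1}{k-1}$ forces $\mathcal{G}_0$ to contain every $k$-set through $1$, in particular $\{1,k+2,k+3,\dots,2k\}$ (this uses $n\ge 2k$). These two sets are disjoint, contradicting the cross-intersecting property. Hence $\mathcal{F}_0$ is a subfamily of the star centered at $1$, so it is intersecting; since $k\ge 3$ and $|\mathcal{F}_0|\ge 2$, Theorem \ref{W23} with $t=0$ applies to $(\mathcal{F}_0,\mathcal{G}_0)$ and gives $|\mathcal{F}|+|\mathcal{G}|=|\mathcal{F}_0|+|\mathcal{G}_0|\le\binom{n}{k}-\binom{n-k}{k}-\binom{n-k-1}{k-1}+2$, as required.

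For the equality characterization I would argue on the shifted families rather than the lexicographic ones, since shifting (Lemma \ref{S3}) interacts correctly with isomorphism while Lemma \ref{Ln} does not. Apply all shifts to obtain shifted cross-intersecting $\mathcal{F}',\mathcal{G}'$ of the same sizes, and suppose the sum equals the bound. If $\mathcal{F}'$ is intersecting, Theorem \ref{W23} with $t=0$ identifies $(\mathcal{F}',\mathcal{G}')$ as one of its extremal pairs, and a routine reverse-shifting argument of the kind carried out in the proof of Theorem \ref{51} recovers, up to isomorphism, the generic two-set family and the two $k=3$ exceptions. To rule out a non-intersecting $\mathcal{F}'$, I would run the lexicographic reduction above on $(\mathcal{F}',\mathcal{G}')$: equality forces the lex families to be extremal, so $|\mathcal{F}'|$ is confined to the extremal values (just $|\mathcal{F}'|=2$ when $k\ge 4$). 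But if $\mathcal{F}'=\{F_1,F_2\}$ with $F_1\cap F_2=\emptyset$, then $\mathcal{G}'\subseteq\{G:G\cap F_1\neq\emptyset \text{ and } G\cap F_2\neq\emptyset\}$, whence $|\mathcal{F}'|+|\mathcal{G}'|\le 2+\binom{n}{k}-2\binom{n-k}{k}+\binom{n-2k}{k}$, which is strictly below the bound because $\binom{n-k-1}{k}>\binom{n-2k}{k}$ for $k\ge 2$ (using Pascal's identity $\binom{n-k}{k}=\binom{n-k-1}{k}+\binom{n-k-1}{k-1}$). This contradiction shows that at equality $\mathcal{F}'$ is intersecting, reducing everything to the case already handled.

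The step I expect to be the main obstacle is precisely this uniqueness analysis. The lexicographic argument yields the inequality in a genuinely short way, but it destroys the isomorphism type, so the extremal families must be extracted from the shifted picture, where -- for $k\ge 4$, and in contrast to the lexicographic families -- $\mathcal{F}'$ need not be intersecting a priori (indeed shifted cross-intersecting families with both members non-intersecting do exist once $k\ge 4$). Carefully excluding every non-intersecting shifted $\mathcal{F}'$ at equality, organizing the reverse-shifting bookkeeping, and tracking the two additional $k=3$ configurations is where essentially all of the work lies; the upper bound itself is the easy part and is, as claimed, a short consequence of Theorem \ref{W23}.
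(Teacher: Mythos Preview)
Your inequality argument is essentially the same as the paper's: pass to the lexicographic initial segments via Lemma~\ref{Ln} and apply Theorem~\ref{W23} with $t=0$. The paper's route to ``$\mathcal{F}_0$ is intersecting'' is slightly slicker than yours: since $|\mathcal{F}|\le|\mathcal{G}|$ and both $\mathcal{F}_0,\mathcal{G}_0$ are initial segments of the same linear order, one has $\mathcal{F}_0\subseteq\mathcal{G}_0$, and cross-intersection of $\mathcal{F}_0$ with $\mathcal{G}_0\supseteq\mathcal{F}_0$ immediately gives that $\mathcal{F}_0$ is intersecting. This replaces your contradiction via $\{2,\dots,k+1\}$ and $\{1,k+2,\dots,2k\}$ in one line.

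The real divergence is in the equality characterization. You propose to pass to shifted families, split on whether $\mathcal{F}'$ is intersecting, and then reverse-shift; you correctly flag that excluding non-intersecting shifted $\mathcal{F}'$ (and handling the extra $k=3$ sizes) is where the work piles up. The paper avoids this entirely. It stays with the original $\mathcal{F},\mathcal{G}$: the lex argument already pins down $|\mathcal{F}|$ and $|\mathcal{G}|$ at equality, and then the paper invokes Lemma~\ref{FM} (the F\"uredi--Griggs/M\"ors equality case for $\mathcal{D}_\ell$). Concretely, at equality one has $\mathcal{G}=\binom{[n]}{k}\setminus\mathcal{D}_k(\mathcal{F})$, so $|\mathcal{D}_k(\mathcal{F})|$ is forced; Lemma~\ref{FM} then says equality in $|\mathcal{D}_k(\mathcal{F})|\ge\binom{n-r}{k}$ occurs only for a full ``$R$-pencil'', which immediately identifies $\mathcal{F}$ (up to isomorphism) in the $k=3$ cases $|\mathcal{F}|=n-2=\binom{n-2}{1}$ and $|\mathcal{F}|=\binom{n-1}{2}$. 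For $|\mathcal{F}|=2$, the direct inclusion--exclusion count on $\mathcal{D}_k(\{F_1,F_2\})$ forces $|F_1\cap F_2|=k-1$. This sidesteps all shifting bookkeeping and the non-intersecting case analysis you anticipated; your approach would work, but Lemma~\ref{FM} is the cleaner tool here.
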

\begin{proof}
If $n = 2 k$, then $
\binom{n}{k}-\binom{n-k}{k}-\binom{n-k-1}{k-1}+2=\binom{2k}{k}$.
Hence the upper bound for $n = 2 k$ is trivial. Suppose that $n > 2 k$. We denote $\mathcal{F}':=\mathcal{L}(n, k,|\mathcal{F}|)$ and $\mathcal{G}':=\mathcal{L}(n, k,|\mathcal{G}|)$. 
By Lemma \ref{Ln}, we know that 
$\mathcal{F}'$ and $\mathcal{G}'$ are cross-intersecting. Since $|\mathcal{G}'| \geq|\mathcal{F}'| \geq r$, we 
have $\mathcal{F}' \subseteq \mathcal{G}'$, which implies that $\mathcal{F}'$ is intersecting. Then setting $t=0$ in Theorem \ref{W23} yields $$
|\mathcal{F}|+|\mathcal{G}| = 
    |\mathcal{F}'| + |\mathcal{G}'| \leq\binom{n}{k}-\binom{n-k}{k}-\binom{n-k-1}{k-1}+2.
$$
Moreover, for $n > 2 k$, the equality holds if and only if $\mathcal{F}'=\left\{[k]\cup\{i\}: i\in\{k,k+1\}\right\}$ and $\mathcal{G}'=\left\{G \in\binom{[n]}{k}: G \cap ([k]\cup\{i\})\neq \emptyset, i\in\{k,k+1\}\right\}$, or two more possibilities when $k=3$, namely, $\mathcal{F}'=\{[2] \cup\{i\}: i \in\{3, 4, \ldots, n\}\}\text{ and } \mathcal{G}'=\left\{G \in\binom{[n]}{3}: G \cap[2] \neq \emptyset\right\}$, or $\mathcal{F}'=\mathcal{G}'=\left\{G \in\binom{[n]}{3}: 1\in G \right\}$.
We need to return to the structure of $\mathcal{F}$ and $\mathcal{G}$. 
In the first extremal case, 
we have $|\mathcal{F}|=|\mathcal{F}'|=2$. 
Denote $\mathcal{F}=\left\{F_1, F_2\right\}$. Then $\left|F_1 \cap F_2\right|=k-1$  by the case of equality in Theorem \ref{W23}. Thus $\mathcal{G}=\left\{G \in\binom{[n]}{k}: G \cap F_i \neq\emptyset, i\in[2]\right\}$. 
In the second extremal case, we have $|\mathcal{F}|=|\mathcal{F}'|=n-2 = 
{n-2 \choose 3-2}$ and $|\mathcal{G}|=|\mathcal{G}'|={n \choose3} - {n-2 \choose 3}$. 
For a fixed family $\mathcal{F}\subseteq \binom{[n]}{3}$, 
the maximality of $|\mathcal{F}| + |\mathcal{G}|$ yields $\mathcal{G} = \binom{[n]}{3} \setminus \mathcal{D}_3(\mathcal{F})$. So we get  
$|\mathcal{D}_3(\mathcal{F})|={n-2 \choose 3}$, which implies that the equality case of  Lemma \ref{FM} occurs. 
Thus, it follows from Lemma \ref{FM} that  $\mathcal{F}=\{[2] \cup\{i\}: i \in\{3,4,  \ldots, n\}\}$ and  $\mathcal{G}= \left\{G \in\binom{[n]}{3}: G \cap[2] \neq \emptyset\right\}$ under isomorphism.
In the third extremal case, we have 
$|\mathcal{F}|=|\mathcal{G}|={n-1 \choose 2}$. Similarly, we get  $| \mathcal{D}_{3}(\mathcal{F})|=\binom{n-1}{3}$. By Lemma \ref{FM}, we obtain $\mathcal{F}= \left\{F\in\binom{[n]}{3}: 1\in F\right\}$ and 
$\mathcal{G}= \left\{G\in\binom{[n]}{3}: 1\in G\right\}$ under isomorphism.  
\end{proof}

\noindent{\bf Proof of Theorem \ref{H17} for $k\geq 4$.}
Suppose that $k\geq 4$, $\mathcal{F}\subseteq\binom{[n]}{k}$ is neither EKR nor HM, and $|\mathcal{F}|$ is maximal. We need to consider the following three cases.

{\bf Case 1.} After applying all possible shifting operations to $\mathcal{F}$ repeatedly, the resulting family is still neither EKR nor HM. 

Note that the resulting family is shifted and has the same size as $\mathcal{F}$. 
In this case, we may assume that $\mathcal{F}$ is shifted. Since $\mathcal{F}$ is neither EKR nor HM, we have $\mathcal{F}(\bar{1})\neq \emptyset$ and $|\mathcal{F}(\bar{1})|\geq 2$.
By Lemma \ref{S1}, $\mathcal{F}(\bar{1})$ is 2-intersecting. Note that $\mathcal{F}(\bar{1}) \subseteq\binom{[n]\backslash \{1\}}{k}$ and $\mathcal{F}(1) \subseteq\binom{[n]\backslash \{1\}}{k-1}$ are  cross-intersecting. Since $k-1\geq 3$, applying Theorem \ref{W23} (for $t=1$) yields
$$
|\mathcal{F}|=|\mathcal{F}(\bar{1})|+|\mathcal{F}(1)| \leq\binom{n-1}{k-1}-\binom{n-k-1}{k-1}-\binom{n-k-2}{k-2}+2.
$$
According to the extremal families in Theorem \ref{W23}, we have
$\mathcal{F}\cong\mathcal{J}_2(n, k)$ or two more possibilities when $k=4$,  $\mathcal{F}\cong\mathcal{G}_3(n, 4)$ or $\mathcal{F}\cong\mathcal{G}_2(n, 4)$.

{\bf Case 2.}
 Suppose that $\mathcal{F}$ is not shifted, and there exists $\mathcal{G} \subseteq\binom{[n]}{k}$ obtained from $\mathcal{F}$ by repeated some shifting operations such that $\mathcal{G}$  is neither EKR nor HM, $|\mathcal{G}|=|\mathcal{F}|$, but
$s_{i, j}(\mathcal{G})$ is EKR.

It is clear that $\cap_{H\in s_{i, j}(\mathcal{G})}H=\{i\}$ and $\{i,j\}\cap G\neq \emptyset$ for all $G\in \mathcal{G}$. Then $\mathcal{G}(\bar{i},\bar{j})= \emptyset$. It follows that $|\mathcal{G}(i,\bar{j})|\geq 2$ and $|\mathcal{G}(\bar{i},j)|\geq 2$. In addition, we have  $\mathcal{G}(\bar{i},j)\cap \mathcal{G}(i,\bar{j})=\emptyset$.
By the maximality of $|\mathcal{G}|$, we have
$\mathcal{G}(i,j)=\binom{[n]\backslash \{i,j\}}{k-2}$.  Note that $\mathcal{G}(i,\bar{j})\subseteq\binom{[n]\backslash \{i,j\}}{k-1}$ and $\mathcal{G}(\bar{i},j)\subseteq\binom{[n]\backslash \{i,j\}}{k-1}$ are cross-intersecting, and $k-1\geq 3$. Note that the inequality in Lemma \ref{W231} holds strictly.
Then we get
\begin{equation*}
\begin{aligned}
|\mathcal{F}|&=|\mathcal{G}(i,j)|+|\mathcal{G}(i,\bar{j})|+|\mathcal{G}(\bar{i}, j)| \\ 
& < \binom{n-2}{k-2}+\binom{n-2}{k-1}-\binom{n-k-1}{k-1}-\binom{n-k-2}{k-2}+2\\
&= \binom{n-1}{k-1}-\binom{n-k-1}{k-1}-\binom{n-k-2}{k-2}+2.
\end{aligned}
\end{equation*}

{\bf Case 3.}
Suppose that $\mathcal{F}$ is not shifted, and there exists $\mathcal{G} \subseteq\binom{[n]}{k}$ obtained from $\mathcal{F}$ by repeated shifting operations such that $\mathcal{G}$  is neither EKR nor HM, $|\mathcal{G}|=|\mathcal{F}|$, but $s_{i, j}(\mathcal{G})$ is HM.

We first have that $s_{i, j}(\mathcal{G})$ is HM at $i$.  Then $s_{i,j}(\mathcal{G})\subseteq \big\{F \in\binom{[n]}{k}: i \in F, F \cap B \neq \emptyset\big\} \cup\{B\}$ for some $i\notin B$, 
This implies that
 $\mathcal{G}(\bar{i}, \bar{j})=s_{i,j}(\mathcal{G}(\bar{i}, \bar{j}))\subseteq \{B\}$ and hence $|\mathcal{G}(\bar{i},\bar{j})|\leq 1$. By Case 2, we may assume that $s_{i, j}(\mathcal{G})$ is not EKR. If $|\mathcal{G}(\bar{i},\bar{j})|=0$, then $|\mathcal{G}(i,\bar{j})|\geq 2$ and $|\mathcal{G}(\bar{i},j)|\geq 2$. In addition, we have  $|\mathcal{G}(\bar{i},j)\cap \mathcal{G}(i,\bar{j})|=1$. Observe that the inequality in Lemma \ref{W231} still holds strictly for  $\mathcal{G}(i,\bar{j})$ and $\mathcal{G}(\bar{i},j)$,  and  $|\mathcal{G}(i,j)|\leq \binom{n-2}{k-2}$. So the same argument with Case 2 works. 
Now assume that $|\mathcal{G}(\bar{i},\bar{j})|= 1$.
 Since $\mathcal{G}(i,j)$ and $\mathcal{G}(\bar{i},\bar{j})$ are cross-intersecting, we have
$
|\mathcal{G}(i,j)|\leq \binom{n-2}{k-2}-\binom{n-k-2}{k-2}.
$
In addition, by the properties of $\mathcal{G}$, $\mathcal{G}(i,\bar{j})$ and $\mathcal{G}(\bar{i},j)$ are cross-intersecting, $\mathcal{G}(\bar{i},j)\cap \mathcal{G}(i,\bar{j})=\emptyset$, $|\mathcal{G}(i,\bar{j})|\geq 1$ and $|\mathcal{G}(\bar{i},j)|\geq 1$.   The inequality in 
Theorem \ref{mainH}  holds strictly. Then 
$
|\mathcal{G}(i,\bar{j})|+|\mathcal{G}(\bar{i}, j)|\leq \binom{n-2}{k-1}-\binom{n-k-1}{k-1}$.
Consequently,
\begin{align*}
|\mathcal{F}|&=|\mathcal{G}(i,j)|+|\mathcal{G}(i,\bar{j})|+|\mathcal{G}(\bar{i}, j)|+1 \\ 
& \leq \binom{n-2}{k-2}-\binom{n-k-2}{k-2}+\binom{n-2}{k-1}-\binom{n-k-1}{k-1}+1\\
&<\binom{n-1}{k-1}-\binom{n-k-1}{k-1}-\binom{n-k-2}{k-2}+2.
\end{align*}
By Case 1-Case 3, we complete the proof of Theorem \ref{H17} for $k\geq 4$.$\hfill \square$\vspace{3mm}

In order to prove Theorem \ref{H17} for $k=3$, we need to establish the corresponding versions of $k=2$ for Theorem \ref{W23} and Lemma \ref{W231}. 
For simplicity, we postpone the proof in Appendix \ref{App-A}.

\subsection{Short proof of the Huang--Peng theorem}
In Theorem \ref{H24}, suppose that there exists $x \in[n]$ such that there are only two sets $E_1$ and $E_2 \in \mathcal{F}$ missing $x$. If $\left|E_1 \cap E_2\right|=k-i$ and $i \geq 2$, then
\begin{align*}
|\mathcal{F}| & \leq\binom{n-1}{k-1}-2\binom{n-k-1}{k-1}+\binom{n-k-i-1}{k-1}+2 \\
& \leq\binom{n-1}{k-1}-2\binom{n-k-1}{k-1}+\binom{n-k-3}{k-1}+2.
\end{align*}
The equality holds if and only if $\left|E_1 \cap E_2\right|=k-2$, i.e., $\mathcal{F}=\mathcal{K}_2(n, k)$.
Moreover, it is not hard to see that
$$
|\mathcal{J}_3(n, k)| =\binom{n-1}{k-1}-\binom{n-k-1}{k-1}-\binom{n-k-2}{k-2}-\binom{n-k-3}{k-3}+3.
$$
 Then $\left|\mathcal{K}_2(n, k)\right| \geq\left|\mathcal{J}_3(n, k)\right|$ if and only if
$\binom{n-1}{k-1}-2\binom{n-k-1}{k-1}+\binom{n-k-3}{k-1}+2\geq \binom{n-1}{k-1}-\binom{n-k-1}{k-1}-\binom{n-k-2}{k-2}-\binom{n-k-3}{k-3}+3$, and if and only if
$\binom{n-k-3}{k-3}\geq \binom{n-k-3}{k-2}+1$, and if and only if  $2 k+1 \leq n \leq 3 k-3$. Moreover,  the equality holds only for $k=4$ and $n=9$.
In addition, for $n \geq 3 k-2$, $\left|\mathcal{K}_2(n, k)\right|<\left|\mathcal{J}_3(n, k)\right|$. Therefore, in the following, we may assume that for any $x \in[n]$, there are at least three sets in $\mathcal{F}$ not containing $x$. Then it suffices to show that
$$
|\mathcal{F}| \leq\binom{n-1}{k-1}-\binom{n-k-1}{k-1}-\binom{n-k-2}{k-2}-\binom{n-k-3}{k-3}+3.
$$
For $k=5$, the equality holds only for $\mathcal{F}=\mathcal{J}_3(n, k)$ or $\mathcal{G}_4(n, k)$. For every other $k$, the equality holds only for $\mathcal{F}=\mathcal{J}_3(n, k)$.
We divide the proof into case $k\geq 5$ and  case $k=4$. For $k\geq 5$, we need the following tools.

\medskip 

Similar to Lemma \ref{W231}, we can translate Theorem \ref{51} into the following lemma.

\begin{lemma}\label{W232}
Let $k \geq 4$ and $n \geq 2 k$ be positive integers.  Let $\mathcal{F} \subseteq\binom{[n]}{k}$ and $\mathcal{G} \subseteq\binom{[n]}{k}$ be  cross-intersecting families. Suppose that $|\mathcal{G}| \geq|\mathcal{F}| \geq 3$. Then
$$
|\mathcal{F}|+|\mathcal{G}| \leq\binom{n}{k}-\binom{n-k}{k}-\binom{n-k-1}{k-1}-\binom{n-k-2}{k-2}+3.
$$
For $n >2k$, the equality holds if and only if $\mathcal{F}=\left\{F_1, F_2, F_3\right\}$ with $F_i:=[k-1]\cup \{k-1+i\}$ for each $i\in [3]$  and $\mathcal{G}=\left\{G \in\binom{[n]}{k}: G \cap F_i \neq\emptyset \text{ for any } i=1, 2, 3\right\}$, or one more possibility when $k=4, \mathcal{F}=\{[3] \cup\{i\}: i \in\{4, \ldots, n\}\}$ and $\mathcal{G}=\left\{G \in\binom{[n]}{4}: G \cap[3] \neq \emptyset\right\}$, under isomorphism.
\end{lemma}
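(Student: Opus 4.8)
The plan is to follow the proof of Lemma \ref{W231} verbatim in structure, replacing the invocation of Theorem \ref{W23} by our main result Theorem \ref{51} with $t=0$. First I would dispose of the boundary case $n=2k$: substituting $n=2k$, each of the three subtracted binomials $\binom{n-k}{k},\binom{n-k-1}{k-1},\binom{n-k-2}{k-2}$ equals $1$, so the right-hand side collapses to $\binom{2k}{k}$; and since for $k$-sets in $[2k]$ no complementary pair $\{A,[2k]\setminus A\}$ can contribute more than $2$ to $|\mathcal{F}|+|\mathcal{G}|$ (a disjoint pair with one member in each family is forbidden), the bound $|\mathcal{F}|+|\mathcal{G}|\le\binom{2k}{k}$ is immediate. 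Hence I may assume $n>2k$.

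For $n>2k$, set $\mathcal{F}':=\mathcal{L}(n,k,|\mathcal{F}|)$ and $\mathcal{G}':=\mathcal{L}(n,k,|\mathcal{G}|)$. Since $n>k+k$, Lemma \ref{Ln} ensures $\mathcal{F}'$ and $\mathcal{G}'$ are still cross-intersecting, while $|\mathcal{F}'|=|\mathcal{F}|$ and $|\mathcal{G}'|=|\mathcal{G}|$. Two lexicographic initial segments of the same uniformity are nested, so $|\mathcal{G}'|\ge|\mathcal{F}'|$ gives $\mathcal{F}'\subseteq\mathcal{G}'$; then any $F_1,F_2\in\mathcal{F}'$ satisfy $F_1\in\mathcal{F}'$ and $F_2\in\mathcal{G}'$, forcing $F_1\cap F_2\neq\emptyset$, i.e.\ $\mathcal{F}'$ is intersecting (that is, $1$-intersecting). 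As $k\ge4$, $|\mathcal{F}'|=|\mathcal{F}|\ge3$ and $n>2k$, Theorem \ref{51} with $t=0$ applies to the pair $(\mathcal{F}',\mathcal{G}')$ and gives
\[
|\mathcal{F}|+|\mathcal{G}|=|\mathcal{F}'|+|\mathcal{G}'|\le\binom{n}{k}-\binom{n-k}{k}-\binom{n-k-1}{k-1}-\binom{n-k-2}{k-2}+3,
\]
which is the desired inequality.

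The remaining, and genuinely delicate, task is to recover the extremal families, because compression preserves only cardinalities. Assume equality. Then the compressed pair is extremal, so by the equality clause of Theorem \ref{51} (with $t=0$, $n>2k$) the segment $\mathcal{F}'$ is either the first three sets in lex order, namely $\{[k-1]\cup\{k-1+i\}:i\in[3]\}$, or, only when $k=4$, the segment $\{[3]\cup\{i\}:i\in\{4,\dots,n\}\}$; correspondingly $|\mathcal{F}|=|\mathcal{F}'|\in\{3,\,n-3\}$. Working now with the \emph{original} $\mathcal{F}$, maximality of $|\mathcal{F}|+|\mathcal{G}|$ forces $\mathcal{G}=\binom{[n]}{k}\setminus\mathcal{D}_k(\mathcal{F})$, so that $|\mathcal{D}_k(\mathcal{F})|$ is pinned to a specific value.

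In the case $|\mathcal{F}|=3$, write $\mathcal{F}=\{F_1,F_2,F_3\}$ and expand $|\mathcal{D}_k(\mathcal{F})|$ by inclusion--exclusion through the union sizes $|F_i\cup F_j|$ and $|F_1\cup F_2\cup F_3|$; a routine computation shows the pinned value $\binom{n-k}{k}+\binom{n-k-1}{k-1}+\binom{n-k-2}{k-2}$ is attained exactly when the three sets share a common $(k-1)$-subset with distinct extra elements, while every other arrangement --- in particular the ``triangle'' $F_1=M\cup\{a,b\}$, $F_2=M\cup\{a,c\}$, $F_3=M\cup\{b,c\}$ with $|M|=k-2$ --- gives a strictly larger $|\mathcal{D}_k(\mathcal{F})|$, hence a strictly smaller $|\mathcal{G}|$, contradicting equality. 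This yields $\mathcal{F}\cong\{[k-1]\cup\{k-1+i\}:i\in[3]\}$ with $\mathcal{G}$ its cross-intersecting closure. In the case $k=4$ and $|\mathcal{F}|=n-3=\binom{n-3}{1}$, I would invoke Lemma \ref{FM} with $r=k-1=3$ and $\ell=k$: the pinned minimum $|\mathcal{D}_4(\mathcal{F})|=\binom{n-3}{4}$ forces, by the equality clause of Lemma \ref{FM}, that $\mathcal{F}=\{F:R\subseteq F\}$ for some $3$-set $R$, i.e.\ $\mathcal{F}\cong\{[3]\cup\{i\}:i\in\{4,\dots,n\}\}$ and $\mathcal{G}=\{G\in\binom{[n]}{4}:G\cap[3]\neq\emptyset\}$. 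The main obstacle throughout is precisely this descent from the compressed families back to the originals; the inequality itself is a one-line consequence of Theorem \ref{51}.
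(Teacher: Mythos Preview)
Your proposal is correct and follows essentially the same approach as the paper, which proves Lemma \ref{W232} by the template of Lemma \ref{W231}: replace $\mathcal{F},\mathcal{G}$ by lexicographic initial segments via Lemma \ref{Ln}, note that the smaller segment is intersecting, apply Theorem \ref{51} with $t=0$, and then recover the original extremal families using Lemma \ref{FM}. Your inclusion--exclusion treatment of $|\mathcal{D}_k(\mathcal{F})|$ in the $|\mathcal{F}|=3$ equality case is a valid (and in fact slightly more explicit) alternative to the paper's direct appeal to the equality clause of the main theorem.
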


\noindent{\bf Proof of Theorem \ref{H24} for $k\geq 5$.}
Let $\mathcal{F} \subseteq\binom{[n]}{k}$ be an maximum intersecting family which is neither EKR nor HM, and $\mathcal{F} \nsubseteq \mathcal{J}_2(n, k)$. For any $x \in[n]$, let $|\mathcal{F}(\bar{x})|\geq 3$. Furthermore, 
for any $\mathcal{G}$ obtained from $\mathcal{F}$ by repeated shifting operations such that $\mathcal{G}$  is neither EKR nor HM, and  $\mathcal{G} \nsubseteq \mathcal{J}_2(n, k)$, we may assume that $|\mathcal{G}(\bar{x})|\geq 3$. We need to deal with the following four cases.

{\bf Case 1.} Applying all shifting operations to $\mathcal{F}$ repeatedly, the resulting shifted family is still not EKR, not HM and not a subfamily of $\mathcal{J}_2(n, k)$. 

Without loss of generality, we may assume that $\mathcal{F}$ is shifted. 
 First note that $|\mathcal{F}(\bar{1})|\geq 3$ and by Lemma \ref{S1}, $\mathcal{F}(\bar{1})$ is 2-intersecting. In addition, $\mathcal{F}(\bar{1}) \subseteq\binom{[n]\backslash \{1\}}{k}$ and $\mathcal{F}(1) \subseteq\binom{[n]\backslash \{1\}}{k-1}$ are  cross-intersecting. Since $k-1\geq 4$, Theorem \ref{51} ($t=1$) leads to
$$
|\mathcal{F}|=|\mathcal{F}(\bar{1})|+|\mathcal{F}(1)| \leq\binom{n-1}{k-1}-\binom{n-k-1}{k-1}-\binom{n-k-2}{k-2}-\binom{n-k-3}{k-3}+3.
$$
Moreover, for \(n \geq 2k + 1\), the equality holds if and only if
\begin{align*}
\mathcal{F}(\overline{1}) &= \left\{ \{2, \ldots, k, k+1\}, \{2, \ldots, k, k+2\}, \{2, \ldots, k, k+3\} \right\},\\
\mathcal{F}(1) &= \left\{ G \in \binom{[n] \setminus \{1\}}{k-1} : G \cap ([2, k] \cup \{i\}) \neq \emptyset \text{ for } i = k+1, k+2, k+3 \right\},
\end{align*}
or one more possibility when \(k - 1 = 4\), i.e., \(k = 5\),
\begin{align*}
\mathcal{F}(\overline{1}) = \left\{ [2, 5] \cup \{i\} : i \in \{6, \ldots, n\} \right\},\ \mathcal{F}(1) = \left\{ G \in \binom{[n] \setminus \{1\}}{4} : G \cap [2, 5] \neq \emptyset \right\}.
\end{align*}
It follows that
\begin{align*}
\mathcal{F} = &\left\{ G \in \binom{[n]}{k} : 1 \in G, G \cap ([2, k] \cup \{i\}) \neq \emptyset \text{ for } i = k+1, k+2, k+3 \right\} \\
& \cup \{2, \ldots, k, k+1\} \cup \{2, \ldots, k, k+2\} \cup \{2, \ldots, k, k+3\},
\end{align*}
or one more possibility when \(k = 5\),
$$
\mathcal{F} = \left\{ G \in \binom{[n]}{5} : 1 \notin G, [2, 5] \subseteq G \right\} \cup \left\{ G \in \binom{[n]}{5} : 1 \in G, G \cap [2, 5] \neq \emptyset \right\}.
$$
For the former, let \(x_0 = 1\), \(E = [2, k]\) and \(J_3 = \{k+1, k+2, k+3\}\). Then \(\mathcal{F} = \mathcal{J}_3(n, k)\). For the latter, let \(x_0 = 1\) and \(E = [2, 5]\). Then \(\mathcal{F} = \mathcal{G}_4(n, 5)\). Moreover, for any \(\mathcal{F}' \subseteq \binom{[n]}{k}\) with \(s_{ij}(\mathcal{F}') = \mathcal{F}\), we have \(\mathcal{F}' \cong \mathcal{J}_3(n, k)\) or one more possibility when \(k = 5\), \(\mathcal{F}' \cong \mathcal{G}_4(n, 5)\).

{\bf Case 2.}
 Suppose that $\mathcal{F}$ is not shifted, and there exists $\mathcal{G} \subseteq\binom{[n]}{k}$ obtained from $\mathcal{F}$ by repeated shifting operations such that $\mathcal{G}$  is neither EKR nor HM, and  $\mathcal{G} \nsubseteq \mathcal{J}_2(n, k)$, $|\mathcal{G}|=|\mathcal{F}|$, but
$s_{i, j}(\mathcal{G})$ is EKR.

It is clear that $\cap_{H\in s_{i, j}(\mathcal{G})}H=\{i\}$ and $\{i,j\}\cap G\neq \emptyset$ for all $G\in \mathcal{G}$. Hence, $\mathcal{G}(\bar{i},\bar{j})= \emptyset$. This implies that $|\mathcal{G}(i,\bar{j})|=|\mathcal{G}(\bar{j})|\geq 3$ and $|\mathcal{G}(\bar{i},j)|=|\mathcal{G}(\bar{i})|\geq 3$. In addition, we have  $\mathcal{G}(\bar{i},j)\cap \mathcal{G}(i,\bar{j})=\emptyset$.  By the maximality of $|\mathcal{G}|$, we have
$\mathcal{G}(i,j)=\binom{[n]\backslash \{i,j\}}{k-2}$.
Note that $\mathcal{G}(i,\bar{j})\subseteq\binom{[n]\backslash \{i,j\}}{k-1}$ and $\mathcal{G}(\bar{i},j)\subseteq\binom{[n]\backslash \{i,j\}}{k-1}$ are cross-intersecting.
Note that the inequality in Lemma \ref{W232} holds strictly
for $\mathcal{G}(i,\bar{j})$ and  $\mathcal{G}(\bar{i},j)$. Then 
\begin{align*}
|\mathcal{F}|&=|\mathcal{G}(i,j)|+|\mathcal{G}(i,\bar{j})|+|\mathcal{G}(\bar{i}, j)|\\ &\leq \binom{n-2}{k-2}+\binom{n-2}{k-1}-\binom{n-k-1}{k-1}-\binom{n-k-2}{k-2}-\binom{n-k-3}{k-3}+2\\
&<\binom{n-1}{k-1}-\binom{n-k-1}{k-1}-\binom{n-k-2}{k-2}-\binom{n-k-3}{k-3}+3.
\end{align*}

{\bf Case 3.}
 Suppose that $\mathcal{F}$ is not shifted, and there exists $\mathcal{G} \subseteq\binom{[n]}{k}$ obtained from $\mathcal{F}$ by repeated shifting operations such that $\mathcal{G}$  is neither EKR nor HM, and  $\mathcal{G} \nsubseteq \mathcal{J}_2(n, k)$, $|\mathcal{G}|=|\mathcal{F}|$, but
$s_{i, j}(\mathcal{G})$ is HM.

First of all, $s_{i, j}(\mathcal{G})$ is HM at $i$.
Therefore, we obtain $|\mathcal{G}(\bar{i},\bar{j})|\leq 1$. By Case 2, we may assume that $s_{i, j}(\mathcal{G})$ is not EKR.  If $|\mathcal{G}(\bar{i},\bar{j})|=0$, then $|\mathcal{G}(i,\bar{j})|\geq 3$ and $|\mathcal{G}(\bar{i},j)|\geq 3$. In addition, we have  $|\mathcal{G}(\bar{i},j)\cap \mathcal{G}(i,\bar{j})|=1$. By Lemma \ref{W232}, the same argument with Case 2 works.

Now assume that $|\mathcal{G}(\bar{i},\bar{j})|= 1$.
 Since $\mathcal{G}(i,j)$ and $\mathcal{G}(\bar{i},\bar{j})$ are cross-intersecting, we have
$
|\mathcal{G}(i,j)|\leq \binom{n-2}{k-2}-\binom{n-k-2}{k-2}.
$
Furthermore, $|\mathcal{G}(i,\bar{j})|\geq 3-|\mathcal{G}(\bar{i},\bar{j})|=2$ and $|\mathcal{G}(\bar{i},j)|\geq 2$.  Observe that $\mathcal{G}(i,\bar{j})$ and $\mathcal{G}(\bar{i},j)$ are cross-intersecting. 
Since $|\mathcal{G}(\bar{i},j)\cap \mathcal{G}(i,\bar{j})|=0$, the inequality in Lemma \ref{W231} holds strictly for 
 $\mathcal{G}(i,\bar{j})$ and  $\mathcal{G}(\bar{i},j)$.  We get 
$
|\mathcal{G}(i,\bar{j})|+|\mathcal{G}(\bar{i}, j)| \leq\binom{n-2}{k-1}-\binom{n-k-1}{k-1}-\binom{n-k-2}{k-2}+1.
$
Consequently,
\begin{align*}
|\mathcal{F}|&=|\mathcal{G}(i,j)|+|\mathcal{G}(i,\bar{j})|+|\mathcal{G}(\bar{i}, j)|+1\\
&\leq \binom{n-2}{k-2}-\binom{n-k-2}{k-2}+\binom{n-2}{k-1}-\binom{n-k-1}{k-1}-\binom{n-k-2}{k-2}+2\\
&<\binom{n-1}{k-1}-\binom{n-k-1}{k-1}-\binom{n-k-2}{k-2}-\binom{n-k-3}{k-3}+3.
\end{align*}

{\bf Case 4.}
 Suppose that $\mathcal{F}$ is not shifted, and there exists $\mathcal{G} \subseteq\binom{[n]}{k}$ obtained from $\mathcal{F}$ by repeated shifting operations such that $\mathcal{G}$  is neither EKR nor HM, and  $\mathcal{G} \nsubseteq \mathcal{J}_2(n, k)$, $|\mathcal{G}|=|\mathcal{F}|$, but
$s_{i, j}(\mathcal{G})\subseteq \mathcal{J}_2(n, k)$.

Since $s_{i, j}(\mathcal{G})\subseteq \mathcal{J}_2(n, k)$, we have $\mathcal{G}(\bar{i},\bar{j})\subseteq \{E_1, E_2\} \text{ for some } |E_1\cap E_2|=k-1$ and so $|\mathcal{G}(\bar{i},\bar{j})|\leq 2$.
By Cases 2 and 3, we may assume that $s_{i, j}(\mathcal{G})$ is neither EKR nor HM. Therefore, if  $|\mathcal{G}(\bar{i},\bar{j})|=0$, then $|\mathcal{G}(\bar{i},j)\cap \mathcal{G}(i,\bar{j})|=2$.
In addition,  $|\mathcal{G}(i,\bar{j})|\geq 3$ and $|\mathcal{G}(\bar{i},j)|\geq 3$. By Lemma \ref{W232}, the same argument with Case 2 works. If  $|\mathcal{G}(\bar{i},\bar{j})|=1$, then $|\mathcal{G}(\bar{i},j)\cap \mathcal{G}(i,\bar{j})|=1$. In addition, we have $|\mathcal{G}(i,\bar{j})|\geq 2$ and $|\mathcal{G}(\bar{i},j)|\geq 2$. By Lemma \ref{W231}, the same argument with Case 3 works.

Now assume that  $|\mathcal{G}(\bar{i},\bar{j})|=2$. Then
$\mathcal{G}(\bar{i},\bar{j})= \{E_1, E_2\} \text{ for some } |E_1\cap E_2|=k-1$.
Considering the following families:
\begin{align*}
\mathcal{G}_1 &=\left\{G \in\binom{[n]\backslash \{i,j\}}{k-2}: G \cap E_1=\emptyset \right\}, \\ 
\mathcal{G}_2&=\left\{G \in\binom{[n]\backslash \{i,j\}}{k-2}: G \cap E_1 \neq \emptyset, G \cap E_2=\emptyset\right\}.
\end{align*}
 Clearly, $|\mathcal{G}_1|=\binom{n-k-2}{k-2}$ and $|\mathcal{G}_2|=\binom{n-k-3}{k-3}$.
Note that $\mathcal{G}_1\cap \mathcal{G}_{2}=\emptyset$, $\mathcal{G}(i,j)$ and $\mathcal{G}(\bar{i},\bar{j})$ are cross-intersecting. Then we have
$$
|\mathcal{G}(i,j)|\leq \binom{n-2}{k-2}-|\mathcal{G}_1|-|\mathcal{G}_2|=\binom{n-2}{k-2}-\binom{n-k-2}{k-2}-\binom{n-k-3}{k-3}.
$$
Moreover, we have $|\mathcal{G}(\bar{i},j)\cap \mathcal{G}(i,\bar{j})|=0$, $|\mathcal{G}(i,\bar{j})|\geq 3-|\mathcal{G}(\bar{i},\bar{j})|=1$ and $|\mathcal{G}(\bar{i},j)|\geq 1$.
Observe that $\mathcal{G}(i,\bar{j})$ and $\mathcal{G}(\bar{i},j)$ are cross-intersecting. The inequality in Theorem \ref{mainH} holds strictly. 
We get
$
|\mathcal{G}(i,\bar{j})|+|\mathcal{G}(\bar{i}, j)| \leq\binom{n-2}{k-1}-\binom{n-k-1}{k-1}.
$
Therefore, we obtain
\begin{align*}
|\mathcal{F}|&=|\mathcal{G}(i,j)|+|\mathcal{G}(i,\bar{j})|+|\mathcal{G}(\bar{i}, j)|+2\\
&\leq \binom{n-2}{k-2}-\binom{n-k-2}{k-2}-\binom{n-k-3}{k-3}+\binom{n-2}{k-1}-\binom{n-k-1}{k-1}+2\\
&<\binom{n-1}{k-1}-\binom{n-k-1}{k-1}-\binom{n-k-2}{k-2}-\binom{n-k-3}{k-3}+3,
\end{align*}
as desired. This completes the proof.$\hfill \square$\vspace{3mm}

It remains to prove Theorem \ref{H24} for $k=4$. We shall establish the corresponding versions of Theorem \ref{51} and Lemma \ref{W232} for  $k=3$. 
We defer the detailed discussions to Appendix \ref{App-B}.

The \textit{covering number} of a family is the  size of the smallest set that intersects all sets from the family.
There are also some literature \cite{F80, FT251,FT252, K24} studying the intersecting families with covering number at least three or five. Therefore, it is natural to investigate the following problem.

\begin{problem}
  What kind of inequalities on cross-intersecting families would be needed to provide new proofs of the results on intersecting families with covering number at least three?  
\end{problem}

\section*{Acknowledgement} 
We would  like to thank Russ Woodroofe for sharing the reference \cite{BW2024}, and Andrey Kupavskii for sharing \cite{Kup2024}.  We would also like to thank the referees for the valuable suggestions which greatly improved the presentation of the manuscript.
 Yongtao Li was supported by the Postdoctoral Fellowship Program of CPSF (No. GZC20233196). Lihua Feng was supported by the NSFC (Nos. 12271527 and 12471022). Guihai Yu was supported by the NSFC (Nos. 12461062 and  11861019) and Natural Science Foundation of Guizhou (Nos. [2020]1Z001 and  [2021]5609).

\appendix
\section{The case $k=3$ in the Han--Kohayakawa theorem}

\label{App-A}

Define  $\mathcal{A}=\{\{1,2\} \cup\{i\}: i \in[3, n]\}\cup\left\{G \in\binom{[n]}{2}:  G\cap \{1,2\}\neq \emptyset\right\}.$

\begin{lemma}\label{HK31}
 Let $n \geq 6$ be an integer.  Let $\mathcal{F} \subseteq\binom{[n]}{3}$ and $\mathcal{G} \subseteq\binom{[n]}{2}$ be cross-intersecting families. Suppose that $|\mathcal{F}| \geq 2$ and $\mathcal{F}$ is $2$-intersecting.  If $\mathcal{F}\cup \mathcal{G}$ is not isomorphic to a subfamily of $\mathcal{A}$, then
$$
|\mathcal{F}|+|\mathcal{G}| \leq\binom{n}{2}-\binom{n-3}{2}-\binom{n-4}{1}+2.
$$
The equality holds if and only if $\mathcal{F}=\left\{F_1, F_2\right\}$ for some $F_1, F_2 \in\binom{[n]}{3}$ with $\left|F_1 \cap F_2\right|=2$ and $\mathcal{G}=\left\{G \in\binom{[n]}{2}: G \cap F_1 \neq \emptyset \text{ and } G \cap F_2 \neq \emptyset\right\}$.
\end{lemma}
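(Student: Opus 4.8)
The plan is to bypass shifting entirely and instead exploit the very rigid structure of a $2$-intersecting family of $3$-sets. First I would establish a structural dichotomy: if $\mathcal{F}\subseteq\binom{[n]}{3}$ is $2$-intersecting with $|\mathcal{F}|\geq 2$, then picking any two members $F_1,F_2$ (which satisfy $|F_1\cap F_2|=2$, since they are distinct $3$-sets) and writing $P=F_1\cap F_2$, either (I) every member of $\mathcal{F}$ contains the pair $P$, or (II) every member of $\mathcal{F}$ lies inside the $4$-set $D:=F_1\cup F_2$. The proof is a short case check: if some $F\in\mathcal{F}$ had an element outside $D$, then to meet each of $F_1,F_2$ in at least two points it would be forced to contain $P$; but any member not containing $P$ must itself lie in $D$, and such a set would meet $F$ in only one point, a contradiction. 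Thus a set outside $D$ forces all of $\mathcal{F}$ to contain $P$, giving the dichotomy.

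With the dichotomy in hand I would split on $|\mathcal{F}|$. When $|\mathcal{F}|=2$, write $\mathcal{F}=\{F_1,F_2\}$ with $|F_1\cap F_2|=2$; the largest $\mathcal{G}$ cross-intersecting $\mathcal{F}$ is $\{G\in\binom{[n]}{2}:G\cap F_1\neq\emptyset,\ G\cap F_2\neq\emptyset\}$, and inclusion--exclusion gives $|\mathcal{G}|=\binom{n}{2}-2\binom{n-3}{2}+\binom{n-4}{2}$. Since $\binom{n-3}{2}-\binom{n-4}{2}=\binom{n-4}{1}$, this equals $\binom{n}{2}-\binom{n-3}{2}-\binom{n-4}{1}$, so $|\mathcal{F}|+|\mathcal{G}|\leq\binom{n}{2}-\binom{n-3}{2}-\binom{n-4}{1}+2$ with equality exactly for this maximal $\mathcal{G}$. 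This is precisely the claimed extremal configuration.

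For $|\mathcal{F}|\geq 3$ I would show the bound is strict. In case (I) we may assume $P=\{1,2\}$, so $\mathcal{F}=\{\{1,2,i\}:i\in S\}$ with $|S|=|\mathcal{F}|\geq 3$. Any $G\in\mathcal{G}$ disjoint from $\{1,2\}$ would have to contain all of $S$, which is impossible as $|S|\geq 3>2=|G|$; hence $\mathcal{G}\subseteq\{G:G\cap\{1,2\}\neq\emptyset\}$ and therefore $\mathcal{F}\cup\mathcal{G}\subseteq\mathcal{A}$, contradicting the hypothesis. So case (I) cannot occur, and we are left with case (II): $\mathcal{F}\subseteq\binom{D}{3}$ with $|D|=4$ and $3\leq|\mathcal{F}|\leq4$. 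A direct count of the $2$-sets meeting every member of $\mathcal{F}$ gives $|\mathcal{G}|\leq\binom{n}{2}-3\binom{n-3}{2}+2\binom{n-4}{2}$ when $|\mathcal{F}|=3$, and $|\mathcal{G}|\leq\binom{4}{2}=6$ when $|\mathcal{F}|=4$ (since then $G$ must satisfy $G\subseteq D$). Comparing against the target value $\binom{n}{2}-\binom{n-3}{2}-\binom{n-4}{1}+2=2n$, the $|\mathcal{F}|=3$ sum exceeds the target by $5-n<0$ and the $|\mathcal{F}|=4$ sum is at most $4+6=10<2n$, both strict for $n\geq 6$.

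The main obstacle is not the arithmetic but making sure the hypothesis ``$\mathcal{F}\cup\mathcal{G}$ is not isomorphic to a subfamily of $\mathcal{A}$'' is used in exactly the right place: it is precisely what eliminates the common-pair case with $|S|\geq 3$, which would otherwise realize the strictly larger Frankl maximum $\binom{n}{2}-\binom{n-3}{2}+1=3n-5$. Once the dichotomy is set up correctly, the remaining work reduces to routine inclusion--exclusion together with Pascal's identity, and the equality analysis falls out of the $|\mathcal{F}|=2$ computation.
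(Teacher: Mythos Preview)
Your proof is correct and follows essentially the same approach as the paper. Both arguments dispose of $|\mathcal{F}|=2$ directly, and for $|\mathcal{F}|\ge 3$ both reduce to the same two structural possibilities for a $2$-intersecting family of $3$-sets (all members contain a fixed pair, versus all members lie in a fixed $4$-set), use the hypothesis on $\mathcal{A}$ to eliminate the first, and bound the second by a short count; your explicit dichotomy statement and inclusion--exclusion computation are just a cleaner packaging of what the paper does with the sunflower/triangle case split and direct counting (note $\binom{n}{2}-3\binom{n-3}{2}+2\binom{n-4}{2}=n+2$, matching the paper's bound on $|\mathcal{G}|$).
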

\begin{proof}
If $|\mathcal{F}|=2$, then the result holds clearly.
Now suppose that  $|\mathcal{F}|\geq 3$. Let $x_1, \ldots, x_5\in [n]$ be five different numbers. If there exists $\left\{\{x_1, x_2, x_3\}, \{x_1, x_2, x_4\}, \{x_1, x_2, x_5\}\right\}\subseteq \mathcal{F}$, then
$
\mathcal{G}\subseteq \{\{x_1,i\}, i\in [n]\backslash \{x_1\} \}\cup \{\{x_2,i\}, i\in [n]\backslash \{x_1,x_2\} \}.$
Since $\mathcal{F}\cup \mathcal{G}$ is not isomorphic to a subfamily of $\mathcal{A}$, we have $\{a, b,c\}\in \mathcal{F}$ for three different elements $a, b, c\in [n]$ satisfying $|\{x_1, x_2\}\cap \{a, b,c\}|\leq 1$.  Since $|\{a, b,c\}\cap\{x_1, x_2, x_3\}|\geq 2$ and $|\{a, b,c\}\cap\{x_1, x_2, x_4\}|\geq 2$, we get $\{a, b,c\}=\{x_1, x_3, x_4\}$ or $\{a, b,c\}=\{x_2, x_3, x_4\}$. But  $|\{x_1, x_3, x_4\}\cap\{x_1, x_2, x_5\}|=1$ and
$|\{x_2, x_3, x_4\}\cap\{x_1, x_2, x_5\}|=1$, a contradiction.
Therefore, we may assume that $\left\{\{x_1, x_2, x_3\}, \{x_1, x_2, x_4\}, \{x_1, x_3, x_4\}\right\}\subseteq \mathcal{F}$. Then
$
\mathcal{G}\subseteq \{\{x_1,i\}, i\in [n]\backslash \{x_1\} \}\cup \{\{x_2,x_3\}, \{x_2,x_4\}, \{x_3,x_4\}\}.
$
So $|\mathcal{G}|\leq n+2$. Since $\mathcal{F}$ is $2$-intersecting, we must have $\mathcal{F}\subseteq\left\{\{x_1, x_2, x_3\}, \{x_1, x_2, x_4\}, \{x_1, x_3, x_4\}, \{x_2, x_3, x_4\}\right\}$. Thus $|\mathcal{F}| \leq 4\leq n-2$.
It follows that
$|\mathcal{F}|+|\mathcal{G}| \leq 2n=\binom{n}{2}-\binom{n-3}{2}-\binom{n-4}{1}+2$. It is not hard to see that the inequality holds strictly. This completes the proof.
\end{proof}

Define  $\mathcal{B}=\{\{1\} \cup\{i\}: i \in[2, n]\}$.
The next lemma deals with the case $k=2$ for Lemma \ref{W231}.

\begin{lemma}\label{HK32}
Let $n \geq 4$ be an integer. Let $\mathcal{F} \subseteq\binom{[n]}{2}$ and $\mathcal{G} \subseteq\binom{[n]}{2}$ be  cross-intersecting families. Suppose that $|\mathcal{G}| \geq|\mathcal{F}| \geq 2$ and $\mathcal{F}\cup \mathcal{G}$ is not isomorphic to a subfamily of $\mathcal{B}$. Then
$$
|\mathcal{F}|+|\mathcal{G}| \leq\binom{n}{2}-\binom{n-2}{2}-\binom{n-3}{1}+2.
$$
For $n \geq 5$, the equality holds if and only if $\mathcal{F}=\left\{F_1, F_2\right\}$ for some $F_1, F_2 \in\binom{[n]}{2}$ with $\left|F_1 \cap F_2\right|=1$ and $\mathcal{G}=\left\{G \in\binom{[n]}{2}: G \cap F_1 \neq \emptyset \text{ and } G \cap F_2 \neq \emptyset\right\}$.
\end{lemma}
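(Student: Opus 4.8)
The plan is to argue directly with the graph-theoretic picture, since—unlike Lemma \ref{W231}—here we cannot reduce to Theorem \ref{W23} (which requires $k\ge 3$) via lexicographic compression. First I would record that the right-hand side simplifies: a direct computation gives $\binom{n}{2}-\binom{n-2}{2}-\binom{n-3}{1}+2=(2n-3)-(n-3)+2=n+2$, so it suffices to prove $|\mathcal{F}|+|\mathcal{G}|\le n+2$ together with the stated equality characterisation for $n\ge 5$. I view $\mathcal{F}$ and $\mathcal{G}$ as graphs on $[n]$; cross-intersecting means every edge of $\mathcal{F}$ shares a vertex with every edge of $\mathcal{G}$, and ``$\mathcal{F}\cup\mathcal{G}$ is isomorphic to a subfamily of $\mathcal{B}$'' means exactly that all edges of $\mathcal{F}\cup\mathcal{G}$ pass through one common vertex, i.e. $\mathcal{F}\cup\mathcal{G}$ is a star.

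The key dichotomy is whether $\mathcal{F}$ itself is a star. Suppose first that $\mathcal{F}$ is a star centred at $v$. If $|\mathcal{F}|\ge 3$, say $\mathcal{F}\supseteq\{\{v,a_1\},\{v,a_2\},\{v,a_3\}\}$, then any edge of $\mathcal{G}$ avoiding $v$ would have to contain all of $a_1,a_2,a_3$, which is impossible; hence $\mathcal{G}$ is also a star at $v$ and $\mathcal{F}\cup\mathcal{G}$ is a star, contradicting the hypothesis. Thus $|\mathcal{F}|=2$, say $\mathcal{F}=\{\{v,a\},\{v,b\}\}$. An edge of $\mathcal{G}$ not through $v$ must meet both $\{v,a\}$ and $\{v,b\}$, hence equals $\{a,b\}$; therefore $\mathcal{G}\subseteq\{\{v,x\}:x\ne v\}\cup\{\{a,b\}\}$, so $|\mathcal{G}|\le n$. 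Moreover $\{a,b\}\in\mathcal{G}$, since otherwise $\mathcal{F}\cup\mathcal{G}$ would be a star at $v$. Hence $|\mathcal{F}|+|\mathcal{G}|\le n+2$, and equality forces $\mathcal{G}$ to contain all $n$ admissible edges, which is precisely $\{G\in\binom{[n]}{2}:G\cap F_1\ne\emptyset\text{ and }G\cap F_2\ne\emptyset\}$ with $F_1=\{v,a\}$, $F_2=\{v,b\}$, $|F_1\cap F_2|=1$.

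It then remains to treat the case where $\mathcal{F}$ is not a star. A graph with at least two edges that is not a star is either a triangle or contains two disjoint edges. If $\mathcal{F}$ is a triangle on $\{x,y,z\}$, any edge of $\mathcal{G}$ meeting all three edges must lie inside $\{x,y,z\}$, so $\mathcal{G}\subseteq\mathcal{F}$ and $|\mathcal{F}|+|\mathcal{G}|\le 6$. If $\mathcal{F}$ contains two disjoint edges $\{a,b\},\{c,d\}$, then every edge of $\mathcal{G}$ meets both, forcing $\mathcal{G}\subseteq\{\{x,y\}:x\in\{a,b\},\,y\in\{c,d\}\}$, whence $|\mathcal{G}|\le 4$; combined with $|\mathcal{F}|\le|\mathcal{G}|$ this yields $|\mathcal{F}|+|\mathcal{G}|\le 6$ (the point being that if $|\mathcal{G}|=4$ then $\mathcal{G}$ is the full $K_{2,2}$ and contains two disjoint edges, which in turn confines $\mathcal{F}$ to exactly $\{\{a,b\},\{c,d\}\}$). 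In every non-star case the sum is at most $6\le n+2$, with strict inequality once $n\ge 5$, so for $n\ge 5$ equality is attained only in the star case, giving the families in the statement.

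The main obstacle, and exactly where the hypothesis $|\mathcal{G}|\ge|\mathcal{F}|$ is indispensable, is the two-disjoint-edges subcase: once $\mathcal{F}$ contains a matching of size two, $\mathcal{G}$ is confined to a $K_{2,2}$, yet $\mathcal{F}$ itself could a priori still be large (for instance a large star plus one extra edge), so it is precisely $|\mathcal{F}|\le|\mathcal{G}|\le 4$ that caps $|\mathcal{F}|$ and keeps the sum at most $6$. I expect the ``an edge avoiding $v$ must contain such-and-such vertices'' deductions to be routine, and the only delicate bookkeeping to be checking that the equality analysis isolates the single configuration claimed for all $n\ge 5$.
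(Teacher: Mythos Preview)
Your proposal is correct and follows essentially the same direct case analysis as the paper, just organized around whether $\mathcal{F}$ is a star rather than the paper's split into $|\mathcal{F}|=2$ versus $|\mathcal{F}|\ge 3$; both arguments hinge on the same observation that a non-star edge in $\mathcal{F}\cup\mathcal{G}$ severely restricts the other family, and both use $|\mathcal{F}|\le|\mathcal{G}|$ in the same place to cap the sum at $6$ in the non-star subcases.
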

\begin{proof}
First note that
$
\binom{n}{2}-\binom{n-2}{2}-\binom{n-3}{1}+2=n+2.
$
If $|\mathcal{F}|=2$, then the result holds clearly.
So we may assume that  $|\mathcal{F}|\geq 3$. Let $x_1, \ldots, x_6\in [n]$ be six different numbers (if exist). If there exists $\left\{\{x_1, x_2\}, \{x_3, x_4\}, \{x_5, x_6\}\right\}\subseteq \mathcal{F}$, then $\mathcal{G}=\emptyset$, a contradiction. So we may assume that $\left\{\{x_1, x_2\}, \{x_1, x_3\}\right\}\subseteq \mathcal{F}$, note that $\mathcal{F}\cup \mathcal{G}$ is not isomorphic to a subfamily of $\mathcal{B}$, so
there are some $u\neq v \in  [n]\backslash \{x_1\}$ such that $\{u, v\}\in  \mathcal{F}$ or  $\{u, v\}\in  \mathcal{G}$.

If $\{u, v\}\in  \mathcal{F}$, then $ \mathcal{G}\subseteq\left\{\{x_1, u\}, \{x_1, v\}, \{x_2, x_3\}\right\}$. So $|\mathcal{G}|\leq 3$. Then $|\mathcal{F}|+|\mathcal{G}| \leq 6\leq n+2$.

If $\{u, v\}\in  \mathcal{G}$, then $\{u, v\}=\{x_2, x_3\}$. Observe that
$
\mathcal{G}\subseteq \{\{x_1,i\}, i\in [n]\backslash \{x_1\} \}\cup  \{\{x_2, x_3\}\}
$.
Since $|\mathcal{G}| \geq|\mathcal{F}| \geq 3$, we have $\left\{\{x_1, a\}, \{x_1, b\}, \{x_2, x_3\}\right\}\subseteq \mathcal{G}$ for some $a\neq b \in  [n]\backslash \{x_1\}$. Then the cross-intersecting property of $\mathcal{F}$ and $\mathcal{G}$ implies that $ \mathcal{F}\subseteq\left\{\{x_1, x_2\}, \{x_1, x_3\}, \{a, b\}\right\}$. Since $|\mathcal{F}|\geq 3$, we have $ \mathcal{F}=\left\{\{x_1, x_2\}, \{x_1, x_3\}, \{a, b\}\right\}$ and $a\in \{x_2, x_3\}$ or $b\in \{x_2, x_3\}$. It follows that $\mathcal{G}=\left\{\{x_1, a\}, \{x_1, b\}, \{x_2, x_3\}\right\}$. Consequently, $|\mathcal{F}|+|\mathcal{G}| \leq 6\leq n+2$. This completes the proof.
\end{proof}

\begin{prop}\label{pr3}
Let $n \geq 5$ be an integer. Let $\mathcal{F} \subseteq\binom{[n]}{2}$ and $\mathcal{G} \subseteq\binom{[n]}{2}$ be  cross-intersecting families and $|\mathcal{F} \cap \mathcal{G}|\leq 1$. Suppose that $|\mathcal{F}| \geq 2$ and $|\mathcal{G}| \geq 2$. Then
$$
|\mathcal{F}|+|\mathcal{G}| \leq\binom{n}{2}-\binom{n-2}{2}-\binom{n-3}{1}+1.
$$
\end{prop}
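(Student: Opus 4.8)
The plan is to derive Proposition~\ref{pr3} as an immediate consequence of Lemma~\ref{HK32}, the point being that the extra hypothesis $|\mathcal{F}\cap\mathcal{G}|\le 1$ precisely excludes the extremal configuration of that lemma and thereby drops the bound by one. First I would simplify the target expression: since $\binom{n}{2}-\binom{n-2}{2}=2n-3$ and $\binom{n-3}{1}=n-3$, the right-hand side equals $(2n-3)-(n-3)+1=n+1$, so it suffices to prove $|\mathcal{F}|+|\mathcal{G}|\le n+1$. Because the hypotheses (cross-intersecting, $|\mathcal{F}\cap\mathcal{G}|\le 1$, and $|\mathcal{F}|,|\mathcal{G}|\ge 2$) and the conclusion are all symmetric in $\mathcal{F}$ and $\mathcal{G}$, I may assume $|\mathcal{G}|\ge|\mathcal{F}|\ge 2$.

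I would then split into two cases according to whether $\mathcal{F}\cup\mathcal{G}$ is isomorphic to a subfamily of the star $\mathcal{B}=\{\{1,i\}:i\in[2,n]\}$. If it is, then $\mathcal{F}\cup\mathcal{G}$ is contained in a family of size $n-1$, so $|\mathcal{F}|+|\mathcal{G}|=|\mathcal{F}\cup\mathcal{G}|+|\mathcal{F}\cap\mathcal{G}|\le(n-1)+1=n\le n+1$, and the bound holds trivially. Here one uses only the size of $\mathcal{B}$ together with $|\mathcal{F}\cap\mathcal{G}|\le 1$; the cross-intersecting property is not even needed.

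In the remaining case $\mathcal{F}\cup\mathcal{G}$ is not isomorphic to a subfamily of $\mathcal{B}$, so Lemma~\ref{HK32} applies and yields $|\mathcal{F}|+|\mathcal{G}|\le n+2$. The crux is to rule out equality. By the equality characterization of Lemma~\ref{HK32} (available since $n\ge 5$), equality would force $\mathcal{F}=\{F_1,F_2\}$ with $|F_1\cap F_2|=1$ and $\mathcal{G}=\{G\in\binom{[n]}{2}:G\cap F_1\ne\emptyset\text{ and }G\cap F_2\ne\emptyset\}$. But since $F_1$ and $F_2$ meet, each of $F_1,F_2$ intersects both $F_1$ and $F_2$, so $F_1,F_2\in\mathcal{G}$; hence $\mathcal{F}\subseteq\mathcal{G}$ and $|\mathcal{F}\cap\mathcal{G}|=2$, contradicting $|\mathcal{F}\cap\mathcal{G}|\le 1$. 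Therefore the inequality of Lemma~\ref{HK32} is strict, giving $|\mathcal{F}|+|\mathcal{G}|\le n+1$, as required.

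The argument requires no computation beyond the simplification of the binomial expression. There is no genuine obstacle, as both cases reduce to results already in hand; the one step I would verify most carefully is the observation that the extremal pair of Lemma~\ref{HK32} automatically satisfies $F_1,F_2\in\mathcal{G}$, which is exactly what makes the condition $|\mathcal{F}\cap\mathcal{G}|\le 1$ eliminate that configuration and sharpen the bound from $n+2$ to $n+1$.
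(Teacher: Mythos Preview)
Your proof is correct and follows essentially the same approach as the paper: split according to whether $\mathcal{F}\cup\mathcal{G}$ is a subfamily of the star $\mathcal{B}$, handle the star case by the trivial size bound together with $|\mathcal{F}\cap\mathcal{G}|\le 1$, and invoke Lemma~\ref{HK32} otherwise. In fact your write-up is slightly more explicit than the paper's, which simply asserts ``the result holds by Lemma~\ref{HK32}'' without spelling out that the equality configuration of that lemma forces $F_1,F_2\in\mathcal{G}$ and hence $|\mathcal{F}\cap\mathcal{G}|=2$.
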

\begin{proof}
If $\mathcal{F}\cup \mathcal{G}$ is not isomorphic to a subfamily of $\{\{1\} \cup\{i\}: i \in[2, n]\}$, then the result holds by Lemma \ref{HK32}.
If $\mathcal{F}\cup \mathcal{G}$ is isomorphic to a subfamily of $\{\{1\} \cup\{i\}: i \in[2, n]\}$, note that $|\mathcal{F}|+|\mathcal{G}|=|\mathcal{F}\cup \mathcal{G}|+|\mathcal{F}\cap \mathcal{G}|$, then $|\mathcal{F} \cap \mathcal{G}|\leq 1$ implies that
$
|\mathcal{F}|+|\mathcal{G}| \leq n<\binom{n}{2}-\binom{n-2}{2}-\binom{n-3}{1}+1,$
as required.
\end{proof}

\noindent{\bf Proof of Theorem \ref{H17} for $k=3$.}
Suppose that $\mathcal{F}\subseteq\binom{[n]}{3}$ is neither EKR nor HM, and not a subfamily of $\mathcal{G}_2(n, 3)$. In addition, $|\mathcal{F}|$ is maximal.
Since $\mathcal{F}$ is not a subfamily of $\mathcal{G}_2(n, 3)$, we have that $\mathcal{F}(\bar{1})\cup\mathcal{F}(1)$ is not isomorphic to a subfamily of $\mathcal{A}$.  According to Lemma \ref{HK31}, Proposition \ref{pr3} and the proof of Theorem \ref{H17} for $k\geq 4$, we only need to deal with the following case.

{\bf Case 1.}
Suppose that $\mathcal{F}$ is not shifted, and there exists $\mathcal{G} \subseteq\binom{[n]}{3}$ obtained from $\mathcal{F}$ by repeated shifting operations such that $\mathcal{G}$  is neither EKR nor HM, and $\mathcal{G}\nsubseteq \mathcal{G}_2(n, 3)$, $|\mathcal{G}|=|\mathcal{F}|$, but
$s_{i, j}(\mathcal{G})\subseteq \mathcal{G}_2(n, 3)$.

By conditions, there exist three different elements $x_0, x_1, x_2$ such that
$$
s_{i, j}(\mathcal{G})\subseteq \left\{G \in\binom{[n]}{3}: \{x_1,x_2\} \subseteq G\right\} \cup\left\{G \in\binom{[n]}{3}: x_0 \in G, G \cap \{x_1,x_2\} \neq \emptyset\right\}.
$$
Hence, $i\in \{x_0, x_1, x_2\}$ and $j\notin \{x_0, x_1, x_2\}$. By symmetry, let us assume that $x_0=i$. This implies that $\mathcal{G}(i,j)\subseteq \{\{x_1\}, \{x_2\}\}$ and so $|\mathcal{G}(i,j)|\leq 2$.
Moreover, $\mathcal{G}(\bar{i},\bar{j})\subseteq \{\{x_1, x_2, y\}: y\in [n]\backslash\{x_1, x_2, i, j\}\}$ and so $|\mathcal{G}(\bar{i},\bar{j})|\leq n-4$.
Observe that $\mathcal{G}(i,\bar{j})\cap \mathcal{G}(\bar{i},j)=\emptyset \text{ or } \{x_1, x_2\}$.
By the maximality of $|\mathcal{G}|$, we may assume that $\mathcal{G}(i,\bar{j})\cap \mathcal{G}(\bar{i},j)=\{x_1, x_2\}$.
Since $\mathcal{G}$ is not a subfamily of $\mathcal{G}_2(n, 3)$, we have
$\mathcal{G}(i,\bar{j})\neq \{x_1, x_2\}$ and $\mathcal{G}(\bar{i},j)\neq  \{x_1, x_2\}$.
Then  $|\mathcal{G}(i,\bar{j})|\geq 2$ and $|\mathcal{G}(\bar{i},j)|\geq 2$.  Note that $\mathcal{G}(i,\bar{j})$ and $\mathcal{G}(\bar{i},j)$ are cross-intersecting. By Proposition \ref{pr3}, we obtain
\begin{align*}
|\mathcal{F}| &=|\mathcal{G}(i,j)|+|\mathcal{G}(\bar{i},\bar{j})|+|\mathcal{G}(i,\bar{j})|+|\mathcal{G}(\bar{i}, j)| \\ 
& \leq 2+n-4+\binom{n-2}{2}-\binom{n-4}{2}-\binom{n-5}{1}+1\\
&<\binom{n-1}{2}-\binom{n-4}{2}-\binom{n-5}{1}+2,
\end{align*}
as required.$\hfill \square$\vspace{3mm}

\section{The case $k=4$ in the Huang--Peng theorem}

\label{App-B}

Denote $
\mathcal{C}=\{[3] \cup\{i\}: i \in[4, n]\}\cup\left\{G \in\binom{[n]}{3}:  G\cap [3]\neq \emptyset\right\}.$


\begin{lemma}\label{HP51}
 Let $n \geq 7$ be an integer.  Let $\mathcal{F} \subseteq\binom{[n]}{4}$ and $\mathcal{G} \subseteq\binom{[n]}{3}$ be cross-intersecting families. Suppose that $|\mathcal{F}| \geq 3$ and $\mathcal{F}$ is $2$-intersecting.
In addition,  $\mathcal{F}$ and $\mathcal{G}$ are both shifted. If $\mathcal{F}\cup \mathcal{G}$ is not isomorphic to a subfamily of $\mathcal{C}$, then
$$
|\mathcal{F}|+|\mathcal{G}| \leq\binom{n}{3}-\binom{n-4}{3}-\binom{n-5}{2}-\binom{n-6}{1}+3.
$$
For $n \geq 8$, the equality holds if and only if $\mathcal{F}=\left\{F_1, F_2, F_3\right\}$ with $F_i:=[3]\cup \{3+i\}$ for each $i\in [3]$  and $\mathcal{G}=\left\{G \in\binom{[n]}{3}: G \cap F_i \neq\emptyset \text{ for any } i=1, 2, 3\right\}$, under isomorphism.
\end{lemma}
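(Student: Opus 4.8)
The bound to be proved is exactly the $k=3$, $t=1$ instance of Theorem \ref{51}, the very case its hypothesis $k\ge 4$ was forced to omit; so the plan is to run the same induction on $n$, now using the exclusion of $\mathcal{C}$ to absorb the loss. Since $\mathcal{F}$ and $\mathcal{G}$ are already shifted, no initial shifting is needed. For the base case $n=7$ I would use the complementation bound: no $4$-set $F$ can have $F\in\mathcal{F}$ and $[7]\setminus F\in\mathcal{G}$ simultaneously, so $|\mathcal{F}|+|\mathcal{G}|\le\binom{7}{3}=35$, which is precisely the claimed value at $n=7$ (uniqueness is asserted only for $n\ge 8$, so here the inequality suffices).

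For the inductive step $n\ge 8$ I would split both families at the largest element. Lemma \ref{S2} gives $|\mathcal{F}(\bar n)|\ge 3$, Lemma \ref{S1} gives that $\mathcal{F}(n)\subseteq\binom{[n-1]}{3}$ is $2$-intersecting, and Lemma \ref{S4} gives that $\mathcal{F}(n),\mathcal{G}(n)$ as well as $\mathcal{F}(\bar n),\mathcal{G}(\bar n)$ are cross-intersecting. If $|\mathcal{F}(n)|=0$, then $\mathcal{F}=\mathcal{F}(\bar n)$ and, choosing $F_1,F_2,F_3\in\mathcal{F}$, the three nested ``missing'' families $\mathcal{G}_{\sigma(1)},\mathcal{G}_{\sigma(2)},\mathcal{G}_{\sigma(3)}$ bound $|\mathcal{G}(n)|\le\binom{n-1}{2}-\binom{n-5}{2}-\binom{n-6}{1}-1$; adding the inductive bound for $|\mathcal{F}(\bar n)|+|\mathcal{G}(\bar n)|$ closes this case, and the equality analysis (carried out just as in Theorem \ref{51}) forces $\mathcal{F}=\{F_1,F_2,F_3\}$ with $|F_1\cap F_2\cap F_3|=2$, i.e.\ the stated extremal pair.

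The real difficulty is the case $|\mathcal{F}(n)|\ge 1$. Here I would apply Frankl's Theorem \ref{F16} with parameters $k=2$, $t=1$ on the ground set $[n-1]$ to get $|\mathcal{F}(n)|+|\mathcal{G}(n)|\le\binom{n-1}{2}-\binom{n-4}{2}+1$, and combine it with the inductive bound on the $(\bar n)$-part. A short Pascal computation shows that this naive sum equals the target plus one---this surplus unit is exactly what forced $k\ge 4$ in Theorem \ref{51}---so one must kill it. The plan is to argue that the two equalities cannot hold together once $\mathcal{C}$ is excluded: shiftedness turns equality in Theorem \ref{F16} into $\mathcal{F}(n)=\{[3]\}$, and propagating the shift rebuilds $\mathcal{F}=\{[3]\cup\{i\}:i\in[4,n]\}$ with $\mathcal{G}=\{G:G\cap[3]\ne\emptyset\}$, which is precisely $\mathcal{C}$, a contradiction. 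The hard part will be the second extremal configuration hidden inside Theorem \ref{F16}: in its $k=2$ form the optimum also admits the book $\mathcal{F}(n)=\{[2]\cup\{i\}\}$, which lifts under shifting to a $\{1,2\}$-book rather than to $\mathcal{C}$. Controlling these dense book families---showing that every surviving equality case is forced into an excluded configuration and that off them the estimate strictly drops by one---is the crux, and it is precisely where the exclusion hypotheses (together with, for the $\{1,2\}$-book, the $\mathcal{G}_2$-type counterpart of $\mathcal{C}$) must be invoked with care.
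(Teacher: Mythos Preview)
Your overall induction scheme---base case by complementation, split at $n$, reuse of Case~1 of Theorem~\ref{51} when $\mathcal{F}(n)=\emptyset$---matches the paper exactly. The divergence is entirely in the case $|\mathcal{F}(n)|\ge 1$, and there your plan has a genuine gap.

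You propose to apply Theorem~\ref{F16} with $k=2$, $t=1$ to $\mathcal{F}(n),\mathcal{G}(n)$, accept that this overshoots the target by one, and then recover that unit by showing equality in Theorem~\ref{F16} forces an excluded configuration. You correctly identify the two equality profiles: $\mathcal{F}(n)=\{[3]\}$ (which you handle, lifting to $\mathcal{C}$) and the book $\mathcal{F}(n)=\{\{1,2\}\cup\{i\}:i\in[3,n-1]\}$. For the book you appeal to ``the $\mathcal{G}_2$-type counterpart of $\mathcal{C}$'', but the lemma has no such hypothesis: it excludes only $\mathcal{C}$. And the full $\{1,2\}$-book $\mathcal{F}=\{F:\{1,2\}\subseteq F\}$, $\mathcal{G}=\{G:G\cap\{1,2\}\ne\emptyset\}$ is \emph{not} a subfamily of $\mathcal{C}$ (its $4$-sets outnumber those of $\mathcal{C}$), so your exclusion argument cannot go through as written. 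The surplus unit does not die.

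The paper avoids this trap by splitting $|\mathcal{F}(n)|\ge 1$ into two subcases rather than applying Theorem~\ref{F16} uniformly. When $|\mathcal{F}(n)|=1$, shiftedness gives $\mathcal{F}(n)=\{[3]\}$; then either $\mathcal{F}(\bar n)=\{[3]\cup\{i\}:i\in[4,n-1]\}$ (which forces $\mathcal{F}\cup\mathcal{G}\subseteq\mathcal{C}$, contradiction) or $\mathcal{F}(\bar n)$ is strictly larger, in which case the extra set in $\mathcal{F}(\bar n)$ (by shiftedness, $\{1,2,4,5\}$) forces a strict inequality $|\mathcal{G}(n)|<\binom{n-1}{2}-\binom{n-4}{2}$ that absorbs the surplus. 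When $|\mathcal{F}(n)|\ge 2$, the paper does \emph{not} use Theorem~\ref{F16} at all; it invokes Lemma~\ref{HK31} instead, which already delivers the sharper bound $\binom{n-1}{2}-\binom{n-4}{2}-\binom{n-5}{1}+2$ but requires the side condition that $\mathcal{F}(n)\cup\mathcal{G}(n)$ is not a subfamily of $\mathcal{A}$. The paper asserts this follows from $\mathcal{F}\cup\mathcal{G}\nsubseteq\mathcal{C}$. So the missing ingredient in your proposal is precisely Lemma~\ref{HK31} (the $k=2$ analogue with an $\mathcal{A}$-exclusion) together with the separate treatment of $|\mathcal{F}(n)|=1$; without these, the book equality case is not controllable by the hypotheses you have available.
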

\begin{proof}
We proceed by applying induction on  $n\geq 7$. For the case $n=7$, the upper bound in Lemma \ref{HP51} is  $\binom{7}{3}$. Since $\mathcal{F}$ and $\mathcal{G}$ are cross-intersecting, at most half of the sets of $\binom{[7]}{4}\cup \binom{[7]}{3}$ belong to $\mathcal{F}\cup\mathcal{G}$.
Thus
$|\mathcal{F}|+|\mathcal{G}| \leq\binom{7}{3}$. 
Suppose that $n \geq 8$ and the result holds for integers less than $n$. Since $\mathcal{F}$ and $\mathcal{G}$ are shifted, by Lemma \ref{S2}, we have $|\mathcal{F}(\bar{n})| \geq 3$. Clearly, $\mathcal{F}(\bar{n})$ is $2$-intersecting. Note that $\mathcal{F}(\bar{n})$ and
$\mathcal{G}(\bar{n})$ are cross-intersecting. By the  induction hypothesis, we get
$$
|\mathcal{F}(\bar{n})|+|\mathcal{G}(\bar{n})| \leq\binom{n-1}{3}-\binom{n-5}{3}-\binom{n-6}{2}-\binom{n-7}{1}+3.
$$

If $|\mathcal{F}(n)|=0$, then the same argument as Case 1 of Theorem \ref{51} works.

If $|\mathcal{F}(n)|=1$, let $\mathcal{F}^{\prime}=\{[3]\cup\{i\}: i\in [4, n-1]\}$. Then $\mathcal{F}^{\prime}\subseteq\mathcal{F}(\bar{n})$. If $\mathcal{F}^{\prime}=\mathcal{F}(\bar{n})$, then $\mathcal{F}=\{[3]\cup\{i\}: i\in [4, n]\}$. So $\mathcal{G} \subseteq\left\{G \in\binom{[n]}{3}: G\cap[3] \neq \emptyset\right\}$. But $\mathcal{F}\cup \mathcal{G}$ is isomorphic to a subfamily of $\mathcal{C}$, a contradiction.
Hence, $\mathcal{F}^{\prime}\subsetneq \mathcal{F}(\bar{n})$. Then we have $\mathcal{G}(n) \subsetneq\left\{G^*\in\binom{[n-1]}{2}: G^*\cap[3] \neq \emptyset\right\}$. This implies that
$
|\mathcal{G}(n)| <\binom{n-1}{2}-\binom{n-4}{2}.
$
Consequently,
$$
|\mathcal{F}|+|\mathcal{G}|=|\mathcal{F}(\bar{n})|+|\mathcal{G}(\bar{n})|+|\mathcal{G}(n)|+1 <\binom{n}{3}-\binom{n-4}{3}-\binom{n-5}{2}-\binom{n-6}{1}+4.
$$
That is, $
|\mathcal{F}|+|\mathcal{G}| \leq\binom{n}{3}-\binom{n-4}{3}-\binom{n-5}{2}-\binom{n-6}{1}+3.$
Note that $\mathcal{F}(\bar{n})$ and $\mathcal{G}(n)$ are shifted on $[n-1]$. If the equality holds, then $\mathcal{G}(n)=\left\{G \in\binom{[n-1]}{2}: G\cap[3] \neq \emptyset\right\}\backslash \{3,n-1\}.$
Since $\mathcal{F}^{\prime}\subsetneq\mathcal{F}(\bar{n})$ and $\mathcal{F}(\bar{n})$  is  shifted, we have $\{1,2,4, 5\}\in \mathcal{F}(\bar{n})$. Observe that $\{3,n-2\}\in \mathcal{G}(n)$ and $n-2\geq 6$. So $\{1,2,4, 5\}\cap\{3,n-2\}=\emptyset$. But $\mathcal{F}(\bar{n})$ and $\mathcal{G}(n)$ are cross-intersecting, a contradiction. Therefore, we have $
|\mathcal{F}|+|\mathcal{G}| <\binom{n}{3}-\binom{n-4}{3}-\binom{n-5}{2}-\binom{n-6}{1}+3.
$

If $|\mathcal{F}(n)|\geq 2$, since $\mathcal{F}\cup \mathcal{G}$ is not isomorphic to a subfamily of $\mathcal{C}$, we have that  $\mathcal{F}(n)\cup \mathcal{G}(n)$ is not isomorphic to a subfamily of $\mathcal{A}$. In addition, by Lemmas \ref{S1} and \ref{S4}, $\mathcal{F}(n)$ is $2$-intersecting, $\mathcal{F}(n)$ and $\mathcal{G}(n)$ are cross-intersecting.  Then applying Lemma \ref{HK31} to $\mathcal{F}(n)$ and $\mathcal{G}(n)$ yields
$
|\mathcal{F}(n)|+|\mathcal{G}(n)| \leq\binom{n-1}{2}-\binom{n-4}{2}-\binom{n-5}{1}+2.
$
Combining this and the previous inequality about $\mathcal{F}(\bar{n})$ and $\mathcal{G}(\bar{n})$, we obtain
\begin{align*}
|\mathcal{F}|+|\mathcal{G}|=&|\mathcal{F}(\bar{n})|+|\mathcal{G}(\bar{n})|+|\mathcal{F}(n)|+|\mathcal{G}(n)|\\
\leq&\binom{n}{3}-\binom{n-4}{3}-\binom{n-5}{2}-\binom{n-6}{1}+3-\binom{n-7}{1}\\
<&\binom{n}{3}-\binom{n-4}{3}-\binom{n-5}{2}-\binom{n-6}{1}+3,
\end{align*}
as required.
\end{proof}

Define $
\mathcal{P}=\{\{1,2\} \cup\{i\}: i \in[3, n]\}\cup\left\{G \in\binom{[n]}{3}:  G\cap \{1,2\}\neq \emptyset\right\}$ and $
\mathcal{Q}=\left\{Q \in\binom{[n]}{3}: 1\in Q\right\}$.

\begin{prop}\label{pr5}
Let $n \geq 7$ be an integer. Let $\mathcal{F} \subseteq\binom{[n]}{3}$ and $\mathcal{G} \subseteq\binom{[n]}{3}$ be  non-empty cross-intersecting families and $|\mathcal{F} \cap \mathcal{G}|\leq 2$. Suppose that $|\mathcal{F}| \geq 3$ and $|\mathcal{G}| \geq 3$. Then
$$
|\mathcal{F}|+|\mathcal{G}| \leq\binom{n}{3}-\binom{n-3}{3}-\binom{n-4}{2}-\binom{n-5}{1}+2.
$$
\end{prop}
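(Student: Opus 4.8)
The plan is to follow the template of Proposition \ref{pr3}. Write $B=\binom{n}{3}-\binom{n-3}{3}-\binom{n-4}{2}-\binom{n-5}{1}+3$ for the ``$+3$''-level bound, so that the target is precisely $B-1$. Exactly as in the passage from Lemma \ref{W232} to the case $k\ge 4$ (the analogue with the hypothesis $|\mathcal{F}\cap\mathcal{G}|\le 2$), the guiding principle is that the value $B$ is attained only by a configuration in which $|\mathcal{F}\cap\mathcal{G}|=3$, so the hypothesis $|\mathcal{F}\cap\mathcal{G}|\le 2$ must force a strict inequality. The first step is therefore to establish the $k=3$ counterpart of Lemma \ref{W232}: if $\mathcal{F},\mathcal{G}\subseteq\binom{[n]}{3}$ are cross-intersecting with $|\mathcal{G}|\ge|\mathcal{F}|\ge 3$ and $\mathcal{F}\cup\mathcal{G}$ is not isomorphic to a subfamily of the pair-star $\mathcal{P}$, then $|\mathcal{F}|+|\mathcal{G}|\le B$, with equality if and only if $\mathcal{F}=\{F_1,F_2,F_3\}$ are three triples through a common pair and $\mathcal{G}=\{G\in\binom{[n]}{3}:G\cap F_i\neq\emptyset\text{ for }i=1,2,3\}$. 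Note that every single- or pair-star configuration is a subfamily of $\mathcal{P}$, so this hypothesis removes all the degenerate families at once.

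The essential difference from $k\ge 4$ is that this auxiliary inequality \emph{cannot} be obtained by the lexicographic compression (Lemma \ref{Ln}) that proves Lemma \ref{W231} and Lemma \ref{W232}. At $k=3$ the bare $+3$ bound is false: the pair-star $\mathcal{F}=\{\{1,2\}\cup\{i\}:i\in[3,n]\}$ with $\mathcal{G}=\mathcal{P}$ is cross-intersecting and has $|\mathcal{F}|+|\mathcal{G}|=2\binom{n-1}{2}>B$ for $n\ge 7$, and compression both forces non-degenerate pairs into such star-like families and forgets $|\mathcal{F}\cap\mathcal{G}|$. Instead I would prove the auxiliary inequality by induction on $n$, in the spirit of Lemma \ref{HP51}: after reducing to shifted $\mathcal{F},\mathcal{G}$ via Lemma \ref{S3}, peel off the top coordinate $n$. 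Lemma \ref{S2} gives $|\mathcal{F}(\bar n)|\ge 3$, so the induction hypothesis applies to $\mathcal{F}(\bar n),\mathcal{G}(\bar n)\subseteq\binom{[n-1]}{3}$, while Lemma \ref{S4} makes the links $\mathcal{F}(n),\mathcal{G}(n)\subseteq\binom{[n-1]}{2}$ cross-intersecting, to which the Hilton--Milner bound of Theorem \ref{mainH} applies. The delicate point is the case $|\mathcal{F}(n)|=1$: there the recursion overshoots $B$ by exactly one, and one spends the hypothesis ``$\mathcal{F}\cup\mathcal{G}$ is not a subfamily of $\mathcal{P}$'' to show that $\mathcal{G}(n)$ is a \emph{proper} subfamily of its potential maximum, regaining the missing unit and eliminating precisely the pair-star families. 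This step is the exact analogue of the $|\mathcal{F}(n)|=1$ discussion in the proof of Lemma \ref{HP51}.

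Granting the auxiliary inequality, Proposition \ref{pr5} follows by the dichotomy used in Proposition \ref{pr3}; by symmetry assume $|\mathcal{G}|\ge|\mathcal{F}|$. If $\mathcal{F}\cup\mathcal{G}$ is not isomorphic to a subfamily of $\mathcal{P}$, the auxiliary inequality gives $|\mathcal{F}|+|\mathcal{G}|\le B$; in the equality case each $F_j$ meets every $F_i$ (they share the common pair), hence $F_j\in\mathcal{G}$, so $\mathcal{F}\subseteq\mathcal{G}$ and $|\mathcal{F}\cap\mathcal{G}|=3$, contradicting $|\mathcal{F}\cap\mathcal{G}|\le 2$. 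Thus the inequality is strict and $|\mathcal{F}|+|\mathcal{G}|\le B-1$, as required. If instead $\mathcal{F}\cup\mathcal{G}$ is isomorphic to a subfamily of $\mathcal{P}$, then $|\mathcal{F}\cup\mathcal{G}|\le|\mathcal{P}|=\binom{n}{3}-\binom{n-2}{3}$, and a repeated use of the Pascal identity gives $|\mathcal{P}|=B-4$; hence $|\mathcal{F}|+|\mathcal{G}|=|\mathcal{F}\cup\mathcal{G}|+|\mathcal{F}\cap\mathcal{G}|\le (B-4)+2=B-2<B-1$, which is even stronger than needed.

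The main obstacle is the auxiliary $+3$ inequality, and inside it the small-link case $|\mathcal{F}(n)|=1$, where the non-degeneracy hypothesis must be converted into a genuine strict gain of one unit; this is exactly the mechanism that makes $k=3$ exceptional and that produces the extra extremal families throughout Section \ref{sec-3}. Everything else is routine: the reduction to shifted families, the peeling identities, and the final Pascal bookkeeping all parallel the computations already carried out for Theorem \ref{51}, Lemma \ref{HP51} and Proposition \ref{pr3}.
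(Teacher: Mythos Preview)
Your route is genuinely different from the paper's. The paper does \emph{not} prove any ``$+3$'' auxiliary inequality for $k=3$; instead it argues directly. After disposing of the case $\mathcal{F}\cup\mathcal{G}\subseteq\mathcal{P}$ or $\subseteq\mathcal{Q}$ via $|\mathcal{F}\cap\mathcal{G}|\le 2$, and of $|\mathcal{F}|=3$ or $|\mathcal{G}|=3$ trivially, it assumes $|\mathcal{F}|,|\mathcal{G}|\ge 4$ and splits on whether one of the families is a star. If one is a star, a short hand enumeration of the possible ``spreads'' of four sets in $\mathcal{F}$ bounds $|\mathcal{G}|$ explicitly. If both are non-trivial, the paper simply quotes the Frankl--Tokushige inequality for non-trivial cross-intersecting families (Lemma~\ref{F24}), which already gives the required bound. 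So the paper trades your intended uniform induction for ad hoc casework plus one external theorem; what your approach would buy, if it went through, is avoiding that external citation and the somewhat tedious star enumeration.

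There is, however, a real gap in your scheme. The step ``after reducing to shifted $\mathcal{F},\mathcal{G}$ via Lemma~\ref{S3}'' is not a valid WLOG for the auxiliary inequality: Lemma~\ref{S3} preserves cross-intersection and the sizes $|\mathcal{F}|,|\mathcal{G}|$, but it does \emph{not} preserve the hypothesis ``$\mathcal{F}\cup\mathcal{G}$ is not isomorphic to a subfamily of $\mathcal{P}$'', nor the hypothesis $|\mathcal{F}\cap\mathcal{G}|\le 2$. A single shift can collapse a non-degenerate pair into a pair-star while simultaneously enlarging $\mathcal{F}\cap\mathcal{G}$, and once inside $\mathcal{P}$ the sum $|\mathcal{F}|+|\mathcal{G}|$ can be as large as $2|\mathcal{P}|$, far above $B$. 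This is precisely why every application in Section~\ref{sec-3} (and the proof of Theorem~\ref{H24} for $k=4$) treats ``a shift destroys the non-degeneracy'' as a separate case with its own argument; your dichotomy supplies no such case. Note also that the model you invoke, Lemma~\ref{HP51}, takes shiftedness as a \emph{hypothesis} and, crucially, has $\mathcal{F}$ $2$-intersecting --- a structural constraint you do not have here and which is what makes the link bound on $\mathcal{F}(n),\mathcal{G}(n)$ tight enough for the induction to close. Without it, the Hilton--Milner bound on the links overshoots by more than the one unit you hope to recover in the $|\mathcal{F}(n)|=1$ case. To make your plan work you would need both a ``shift-into-$\mathcal{P}$'' case and a replacement for the $2$-intersecting hypothesis in controlling the links; the paper sidesteps all of this by the direct argument above.
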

\begin{proof}
If $\mathcal{F}\cup \mathcal{G}$ is isomorphic to a subfamily of $\mathcal{P}$ or $\mathcal{Q}$, then $|\mathcal{F}\cup \mathcal{G}|\leq \binom{n}{3}-\binom{n-2}{3}$ or $|\mathcal{F}\cup \mathcal{G}|\leq \binom{n-1}{2}$. Since $|\mathcal{F} \cap \mathcal{G}|\leq 2$, we obtain
\begin{align*}
|\mathcal{F}|+|\mathcal{G}|\leq \binom{n}{3}-\binom{n-2}{3}+2<\binom{n}{3}-\binom{n-3}{3}-\binom{n-4}{2}-\binom{n-5}{1}+2.
\end{align*}

Now suppose that $\mathcal{F}\cup \mathcal{G}$ is neither isomorphic to a subfamily of $\mathcal{P}$ nor a subfamily of $\mathcal{Q}$. If $|\mathcal{F}| = 3$ or $|\mathcal{G}| = 3$, the result holds clearly. So let us assume that $|\mathcal{F}| \geq 4$ and $|\mathcal{G}| \geq 4$.

Let $x_1, \ldots, x_5\in [n]$ be five different numbers.
When either $\mathcal{F}$ or $\mathcal{G}$ is a star. By symmetry, we may assume that $\mathcal{F}$ is a star at $x_1$.
If any four sets of $\mathcal{F}$ contain $x_1, x_2$, then $G\cap \{x_1,x_2\}\neq \emptyset$ for any $G\in \mathcal{G}$. So $\mathcal{F}\cup \mathcal{G}$ is isomorphic to a subfamily of $\mathcal{P}$, a contradiction.
If $\{\{x_1,x_2,x_3\}, \{x_1,x_2,x_4\}, \{x_1,x_2,x_5\}, \{x_1,a,b\}  \}\subseteq \mathcal{F}$ for some $a\neq b$ and $a,b\notin \{x_1, x_2\}$, then $
\mathcal{G}\subseteq \left\{G \in\binom{[n]}{3}: x_1\in G\right\}\cup\left\{\{x_2,a, i\}:  i\in [n]\backslash\{x_1,x_2,a\}\right\}
\cup\left\{\{x_2,b, i\}:  i\in [n]\backslash\{x_1,x_2,a,b\}\right\}$ $\cup $ 
$\{\{x_3,x_4,x_5\}\}.
$
It follows that
$
|\mathcal{F}|+|\mathcal{G}|=|\mathcal{F}\cup \mathcal{G}|+|\mathcal{F}\cap \mathcal{G}|\leq \binom{n-1}{2}+2n-4$,
which is smaller than the required upper bound.
If  $\{\{x_1,x_2,x_3\}, \{x_1,x_2,x_4\}, \{x_1,a,b\}, \{x_1,a,d\}  \}\subseteq \mathcal{F}$ for three different elements $a, b, d$ and $a\notin \{x_1, x_2, x_3, x_4\}$ and $b, d\notin\{x_1, x_2\}$, then
$
\mathcal{G}\subseteq\left\{G \in\binom{[n]}{3}: x_1\in G\right\}\cup\left\{\{x_2,a, i\}:  i\in [n]\backslash\{x_1,x_2,a\}\right\}\cup\{\{x_2,b,d\},\{x_3,x_4,a\}\}.
$
So $|\mathcal{F}|+|\mathcal{G}|=|\mathcal{F}\cup \mathcal{G}|+|\mathcal{F}\cap \mathcal{G}|\leq \binom{n-1}{2}+n+1$. This is also smaller than the required upper bound.
If $\{\{x_1,x_2,x_3\}, \{x_1,x_2,x_4\}$, $\{x_1,a,b\}$, $\{x_1,c,d\}  \}\subseteq \mathcal{F}$ for four different elements $a, b, c, d$ and $a, b,c, d\notin \{x_1, x_2\}$, then we similarly have $|\mathcal{F}|+|\mathcal{G}|\leq \binom{n-1}{2}+6$. This is also smaller than the required upper bound.
If $\{\{x_1,x_2,x_3\}$, $\{x_1,x_4,x_5\}$, $\{x_1,x_6,x_7\}$, 
$\{x_1,x_8,x_9\}  \}\subseteq \mathcal{F}$ for nine different elements $x_1, \ldots, x_9 \in [n]$, then $G\cap \{x_1\}\neq \emptyset$ for any $G\in \mathcal{G}$. So $\mathcal{F}\cup \mathcal{G}$ is isomorphic to a subfamily of $\mathcal{Q}$, a contradiction.

It remains to consider the case that  $\mathcal{F}$ and $\mathcal{G}$ are both non-trivial. Although we can get the conclusion by a similar discussion as above. For simplicity, we use an old result due to Frankl and Tokushige.

\begin{lemma}[See \cite{F98, F24}] \label{F24}
Let $n \geq 2k+1\geq 5$ be an integer. Let $\mathcal{F} \subseteq\binom{[n]}{k}$ and $\mathcal{G} \subseteq\binom{[n]}{k}$ be non-trivial cross-intersecting families. Then
$$
|\mathcal{F}|+|\mathcal{G}| \leq\binom{n}{k}-2\binom{n-k}{k}+\binom{n-2k}{k}+2.
$$
For $k\geq 3$, the equality holds if and only if $\mathcal{F}=\left\{F_1, F_2\right\}$ for some disjoint $F_1, F_2 \in\binom{[n]}{k}$ and $\mathcal{G}=\left\{G \in\binom{[n]}{k}: G \cap F_i \neq\emptyset \text{ for any } i=1, 2\right\}$.
\end{lemma}
Note that for $ 2k+1\leq n\leq 3k-1$, the upper bound of Lemma \ref{F24} is
reduced to $\binom{n}{k}-2\binom{n-k}{k}+2$. Returning to our proof. Note that $|\mathcal{F}|\geq 4$. By Lemma \ref{F24}, we have
$
|\mathcal{F}|+|\mathcal{G}| \leq\binom{n}{3}-2\binom{n-3}{3}+1
$
for $ 7 \leq n\leq 8$,
and
$
|\mathcal{F}|+|\mathcal{G}| \leq\binom{n}{3}-2\binom{n-3}{3}+\binom{n-6}{3}+1
$
for $ n\geq 9$. By simple calculations, we know that these bounds are smaller or equal to the required upper bound.
This completes the proof of Proposition \ref{pr5}.
\end{proof}

\begin{lemma}\label{Lem52}
 Let $n \geq 7$ be an integer.  Let $x_1\neq x_2\in [n]$ and $\mathcal{F}, \mathcal{G} \subseteq\left\{A \in\binom{[n]}{3}:  A\cap \{x_1, x_2\}\neq \emptyset\right\}$ be cross-intersecting families. Let $
\mathcal{R}=\left\{A \in\binom{[n]}{3}: \{x_1,x_2\} \subseteq A \right\}$.
Suppose that $\mathcal{R}\subsetneq \mathcal{F}$, $\mathcal{R}\subsetneq \mathcal{G}$ and $(\mathcal{F}\backslash\mathcal{R}) \cap(\mathcal{G}\backslash\mathcal{R}) =\emptyset$.
Then
$$
|\mathcal{F}|+|\mathcal{G}| \leq\binom{n}{3}-\binom{n-3}{3}-\binom{n-4}{2}-\binom{n-5}{1}+2.
$$
\end{lemma}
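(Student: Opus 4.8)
The plan is to first simplify the target, which is cleaner than it looks. Repeated use of Pascal's rule gives $\binom{n-3}{3}+\binom{n-4}{2}+\binom{n-5}{1}+1=\binom{n-2}{3}$, so the claimed bound equals $\binom{n}{3}-\binom{n-2}{3}+3$. Since the number of $3$-sets meeting $\{x_1,x_2\}$ is exactly $\binom{n}{3}-\binom{n-2}{3}=(n-2)^2$, writing $m:=n-2$ the goal becomes $|\mathcal{F}|+|\mathcal{G}|\le m^2+3$. I would set $X:=[n]\setminus\{x_1,x_2\}$ (so $|X|=m$) and partition each family according to which of $x_1,x_2$ it uses: $\mathcal{F}=\mathcal{R}\cup\mathcal{F}_1\cup\mathcal{F}_2$, where $\mathcal{F}_i$ consists of the members containing $x_i$ but not the other, and likewise $\mathcal{G}=\mathcal{R}\cup\mathcal{G}_1\cup\mathcal{G}_2$. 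Deleting the distinguished element turns each $\mathcal{F}_i,\mathcal{G}_i$ into a $2$-uniform family $\mathcal{F}_i',\mathcal{G}_i'\subseteq\binom{X}{2}$, and note $|\mathcal{R}|=m$.

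Next I would read off the constraints in the reduced picture. Pairs drawn from the same side ($\mathcal{F}_1$ with $\mathcal{G}_1$, or $\mathcal{F}_2$ with $\mathcal{G}_2$) automatically intersect in $x_1$ resp.\ $x_2$, and $\mathcal{R}$ meets everything; hence the cross-intersecting hypothesis is equivalent to saying that $\mathcal{F}_1'$ and $\mathcal{G}_2'$ are cross-intersecting and that $\mathcal{F}_2'$ and $\mathcal{G}_1'$ are cross-intersecting. The disjointness $(\mathcal{F}\setminus\mathcal{R})\cap(\mathcal{G}\setminus\mathcal{R})=\emptyset$ collapses to $\mathcal{F}_1'\cap\mathcal{G}_1'=\emptyset$ and $\mathcal{F}_2'\cap\mathcal{G}_2'=\emptyset$, whence
$$|\mathcal{F}_1'|+|\mathcal{G}_1'|\le\binom{m}{2},\qquad |\mathcal{F}_2'|+|\mathcal{G}_2'|\le\binom{m}{2}.$$
Finally $\mathcal{R}\subsetneq\mathcal{F}$ and $\mathcal{R}\subsetneq\mathcal{G}$ give $\mathcal{F}_1'\cup\mathcal{F}_2'\neq\emptyset$ and $\mathcal{G}_1'\cup\mathcal{G}_2'\neq\emptyset$. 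Writing $a,b,c,d$ for the sizes of $\mathcal{F}_1',\mathcal{F}_2',\mathcal{G}_1',\mathcal{G}_2'$ and using $|\mathcal{F}|+|\mathcal{G}|=2|\mathcal{R}|+a+b+c+d=2m+(a+b+c+d)$, it suffices to prove $a+b+c+d\le\binom{m}{2}+2m-2$.

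For this optimization I would split on emptiness. If all four families $\mathcal{F}_1',\mathcal{F}_2',\mathcal{G}_1',\mathcal{G}_2'$ are non-empty, then applying the Hilton--Milner inequality (Theorem~\ref{mainH} with $k=2$ on the $m\ge 5>2k$ points of $X$, after ordering the two families in each pair) yields $a+d\le 2m-2$ and $b+c\le 2m-2$, so $a+b+c+d\le 4m-4\le\binom{m}{2}+2m-2$. Otherwise some family is empty; since the relabelings $x_1\leftrightarrow x_2$ and $\mathcal{F}\leftrightarrow\mathcal{G}$ preserve all constraints and act transitively on $\{a,b,c,d\}$, I may assume $a=0$, and then $b\ge 1$ because $\mathcal{F}_1'\cup\mathcal{F}_2'\neq\emptyset$. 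If $c=0$ then $a+b+c+d=b+d\le\binom{m}{2}$; if $c\ge 1$ then $\mathcal{F}_2',\mathcal{G}_1'$ are both non-empty, so Theorem~\ref{mainH} gives $b+c\le 2m-2$ and hence $a+b+c+d=(b+c)+d\le (2m-2)+\binom{m}{2}$. In every case $a+b+c+d\le\binom{m}{2}+2m-2$, so $|\mathcal{F}|+|\mathcal{G}|\le 2m+\binom{m}{2}+2m-2$; this is at most $m^2+3$ precisely because $m^2+3-\bigl(\binom{m}{2}+4m-2\bigr)=\tfrac12(m-2)(m-5)\ge 0$ for $m=n-2\ge 5$.

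The genuinely delicate part is the reduction, not the arithmetic: one must verify carefully that the only surviving cross-intersecting conditions are the two ``opposite-side'' ones, and that the hypotheses $\mathcal{R}\subsetneq\mathcal{F}$ and $\mathcal{R}\subsetneq\mathcal{G}$ are exactly what rule out the degenerate configuration in which one family lives entirely on a single side (the case that would otherwise break the bound). Because the statement asserts only an inequality, no analysis of equality cases is needed, which keeps the emptiness case-split short and avoids invoking the extremal characterization in Theorem~\ref{mainH}.
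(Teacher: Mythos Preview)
Your proof is correct and takes a genuinely different, much cleaner route than the paper. The paper proves Lemma~\ref{Lem52} by a direct but lengthy case analysis on the structure of $\mathcal{F}$: it first handles the situation where $\mathcal{F}$ is (essentially) a star at $x_1$, then splits into three further subcases (labelled (i)--(iii)) according to the pattern of the sets in $\mathcal{F}$ containing $x_2$, bounding $|\mathcal{G}|$ explicitly in each and checking the resulting numerical inequalities. Your approach instead strips off $\{x_1,x_2\}$ to reduce to four $2$-uniform families on $X=[n]\setminus\{x_1,x_2\}$, cleanly identifies that the only nontrivial constraints are the two ``opposite-side'' cross-intersecting conditions together with the two disjointness conditions, and then applies Theorem~\ref{mainH} for $k=2$ (plus the trivial $|\mathcal{F}_i'|+|\mathcal{G}_i'|\le\binom{m}{2}$ from disjointness). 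The symmetry argument reducing the emptiness analysis to a single case is valid, and the final numerical check $\tfrac12(m-2)(m-5)\ge 0$ for $m\ge 5$ is exactly the condition $n\ge 7$. What your approach buys is brevity and a clear structural picture; what the paper's approach buys is self-containment, since it uses nothing beyond elementary counting.
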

\begin{proof}
By conditions and symmetry, we may assume that $\{x_1, a, b\}\in \mathcal{F}$ for some
$a\neq b\in [n]\backslash \{x_1, x_2\}$. Then
\begin{align*}
\mathcal{G}\subseteq\left\{G \in\binom{[n]}{3}: x_1\in G\right\}\cup\left\{\{x_2,a, i\}:  i\in [n]\backslash\{x_1,x_2,a\}\right\}\cup\left\{\{x_2,b, i\}:  i\in [n]\backslash\{x_1,x_2,a, b\}\right\}.
\end{align*}
If $\mathcal{F}$ is a star at $x_1$, or $\mathcal{F}$ is a star at $x_1$ and there is at most a $\{x_2, c, d\}\in \mathcal{F}$ for some
$c\neq d\in [n]\backslash \{x_1, x_2\}$, note that $(\mathcal{F}\backslash\mathcal{R}) \cap(\mathcal{G}\backslash\mathcal{R}) =\emptyset$, then we have
$
|\mathcal{F}|+|\mathcal{G}|\leq n-2+1+ \binom{n-1}{2}+n-3+n-4
=\binom{n-1}{2}+3n-8
$.
Note that $\binom{n}{3}-\binom{n-3}{3}-\binom{n-4}{2}-\binom{n-5}{1}+2=\binom{n-1}{2}+\binom{n-2}{2}+3$.
For $n\geq 7$, let
$
g(n)=\binom{n-1}{2}+\binom{n-2}{2}+3-\left(\binom{n-1}{2}+3n-8\right)
=\binom{n-2}{2}-3n+11=\frac{1}{2}\left(n^2-11n+28 \right)
$.
Then $h(n)\geq 0$. So the result follows. By symmetry, the same case holds for $\mathcal{G}$.

Next we consider the following three cases.

(i) If there exist $\{x_2, c, d\}, \{x_2, c, e\}\in \mathcal{F}$ for three different elements $c, d, e\in [n]\backslash \{x_1, x_2\}$, and $\{x_1, a, b\}, \{x_1, a, y\}\in \mathcal{F}$ for three different elements $a, b, y\in [n]\backslash \{x_1, x_2\}$, then
\begin{align*}
\mathcal{G}\subseteq&\mathcal{R}\cup\left\{\{x_1,c, i\}:  i\in [n]\backslash\{x_1,x_2,c\}\right\}\cup\{\{x_1,e, d\}, \{x_2,b, y\}\}\cup\left\{\{x_2,a, i\}:  i\in [n]\backslash\{x_1,x_2,a\}\right\}.
\end{align*}
So $|\mathcal{G}|\leq 3n-6$.

(ii) If there exist $\{x_2, c, d\}, \{x_2, e, f\}\in \mathcal{F}$ for four different elements $c, d, e, f\in [n]\backslash \{x_1, x_2\}$, and $\{x_1, a, b\}, \{x_1, a, y\}\in \mathcal{F}$ for three different elements $a, b, y\in [n]\backslash \{x_1, x_2\}$, then
\begin{align*}
\mathcal{G}\subseteq&\mathcal{R}\cup\left\{\{x_1,c, i\}:  i\in \{e,f\}\right\}\cup\left\{\{x_1,d, i\}:  i\in \{e,f\}\right\}\cup\left\{\{x_2,a, i\}:  i\in [n]\backslash\{x_1,x_2,a\}\right\}\\
&\cup\{\{x_2,b, y\}\}.
\end{align*}
So $|\mathcal{G}|\leq 2 n$.

(iii) If there exist $\{x_2, c, d\}, \{x_2, e, f\}\in \mathcal{F}$ for four different elements $c, d, e, f\in [n]\backslash \{x_1, x_2\}$, and $\{x_1, a, b\}, \{x_1, y, z\}\in \mathcal{F}$ for four different elements $a, b, y, z\in [n]\backslash \{x_1, x_2\}$, then
\begin{align*}
\mathcal{G}\subseteq&\mathcal{R}\cup\left\{\{x_1,c, i\}:  i\in \{e,f\}\right\}\cup\left\{\{x_1,d, i\}:  i\in \{e,f\}\right\}\cup\left\{\{x_2,a, i\}:  i\in \{y,z\}\right\}\\
&\cup\left\{\{x_2,b, i\}:  i\in \{y,z\}\right\}.
\end{align*}
So $|\mathcal{G}|\leq n+6$.

Returning to case (i), we shall assume that cases (ii) and (iii) are not true. Then  $|\mathcal{F}|\leq n-2+2(n-3)=3n-8$. Therefore, we have
$|\mathcal{F}|+|\mathcal{G}|\leq 6n-14\leq\binom{n-1}{2}+\binom{n-2}{2}+3$.
For case (ii), we shall assume that case (iii) is not true. In addition, we may assume that at least two elements of $\left\{\{x_1,c, i\}:  i\in \{e,f\}\right\}\cup\left\{\{x_1,d, i\}:  i\in \{e,f\}\right\}$ are in $\mathcal{G}$.
Then  $|\mathcal{F}|\leq n-2+n-2+n-3=3n-7$. Therefore, we have
$|\mathcal{F}|+|\mathcal{G}|\leq 5n-7\leq\binom{n-1}{2}+\binom{n-2}{2}+3$.
For case (iii), we may assume that at least two elements of $\left\{\{x_1,c, i\}:  i\in \{e,f\}\right\}\cup\left\{\{x_1,d, i\}:  i\in \{e,f\}\right\}$ are in $\mathcal{G}$, and at least two elements of $\left\{\{x_2,a, i\}:  i\in \{y,z\}\right\}\cup\left\{\{x_2,b, i\}:  i\in \{y,z\}\right\}$ are in $\mathcal{G}$.  Then  $|\mathcal{F}|\leq n-2+n-2+n-2=3n-6$. Therefore, we have
$|\mathcal{F}|+|\mathcal{G}|\leq 4n\leq\binom{n-1}{2}+\binom{n-2}{2}+3$.
This completes the proof.
\end{proof}

\noindent{\bf Proof of Theorem \ref{H24} for $k=4$.}
Let $\mathcal{F} \subseteq\binom{[n]}{4}$ be an maximum intersecting family which is neither EKR nor HM, and $\mathcal{F} \nsubseteq \mathcal{J}_2(n, 4),\mathcal{G}_2(n, 4), \mathcal{G}_3(n, 4)$. For any $x \in[n]$, let $|\mathcal{F}(\bar{x})|\geq 3$. Furthermore, 
for any $\mathcal{G}$ obtained from $\mathcal{F}$ by repeated shifting operations such that $\mathcal{G}$  is neither EKR nor HM, and  $\mathcal{G} \nsubseteq \mathcal{J}_2(n, 4), \mathcal{G}_2(n, 4), \mathcal{G}_3(n, 4)$, we may assume that $|\mathcal{G}(\bar{x})|\geq 3$.
Since $\mathcal{F}$ is not a subfamily of $\mathcal{G}_3(n, 4)$, we have that $\mathcal{F}(\bar{1})\cup \mathcal{F}(1)$ is  not isomorphic to a subfamily of $\mathcal{C}$.  According to Lemma \ref{HP51}, Proposition \ref{pr5} and the proof of Theorem \ref{H24} for $k\geq 5$, we only need to deal with the following two cases.

{\bf Case 1.}
 Suppose that $\mathcal{F}$ is not shifted, and there exists $\mathcal{G} \subseteq\binom{[n]}{4}$ obtained from $\mathcal{F}$ by repeated shifting operations such that $\mathcal{G}$  is neither EKR nor HM, and  $\mathcal{G} \nsubseteq \mathcal{J}_2(n, 4)$, $\mathcal{G}_2(n, 4)$, $\mathcal{G}_3(n, 4)$, $|\mathcal{G}|=|\mathcal{F}|$, but
$s_{i, j}(\mathcal{G})\subseteq \mathcal{G}_2(n, 4)$.

By conditions, there exists  $x_0, x_1, x_2$ such that
$$
s_{i, j}(\mathcal{G})\subseteq \left\{G \in\binom{[n]}{4}: \{x_1,x_2\} \subseteq G\right\} \cup\left\{G \in\binom{[n]}{4}: x_0 \in G, G \cap \{x_1,x_2\} \neq \emptyset\right\}.
$$
Hence, $i\in \{x_0, x_1, x_2\}$ and $j\notin \{x_0, x_1, x_2\}$. By symmetry, we may assume that $x_0=i$. This implies that $|\mathcal{G}(i,j)|=|s_{i, j}(\mathcal{G})(i,j)|\leq \binom{n-2}{2}- \binom{n-4}{2}= 2n-7$.
By the maximality of $|\mathcal{G}|$, we have $|\mathcal{G}(\bar{i},\bar{j})|=\binom{n-4}{2}$. Observe that $\mathcal{G}(i,\bar{j}), \mathcal{G}(\bar{i},j) \subseteq\left\{A \in\binom{[n]\backslash \{i,j\}}{3}:  A\cap \{x_1, x_2\}\neq \emptyset\right\}$. Let
$
\mathcal{R}=\left\{A \in\binom{[n]\backslash \{i,j\}}{3}: \{x_1,x_2\} \subseteq A\right\}.
$
By the maximality of $|\mathcal{G}|$ and $\mathcal{G} \nsubseteq\mathcal{G}_2(n, 4)$, we may assume that $\mathcal{R}\subsetneq \mathcal{G}(i,\bar{j})$ and $\mathcal{R}\subsetneq \mathcal{G}(\bar{i},j)$. Note that $\mathcal{G}(i,\bar{j})\cap \mathcal{G}(\bar{i},j)\subseteq \mathcal{R}$.
So $(\mathcal{G}(i,\bar{j})\backslash\mathcal{R}) \cap(\mathcal{G}(i,\bar{j})\backslash\mathcal{R}) =\emptyset$.
Since $\mathcal{G}(i,\bar{j})$ and $\mathcal{G}(\bar{i},j)$ are cross-intersecting and $n-2\geq 7$, by Lemma \ref{Lem52}, we obtain
\begin{align*}
|\mathcal{F}|&=|\mathcal{G}(i,j)|+|\mathcal{G}(\bar{i},\bar{j})|+|\mathcal{G}(i,\bar{j})|+|\mathcal{G}(\bar{i}, j)|\\
&\leq 2n-7+\binom{n-4}{2}+ \binom{n-2}{3}-\binom{n-5}{3}-\binom{n-6}{2}-\binom{n-7}{1}+2\\
&<\binom{n-1}{3}-\binom{n-5}{3}-\binom{n-6}{2}-\binom{n-7}{1}+3.
\end{align*}

{\bf Case 2.}
 Suppose that $\mathcal{F}$ is not shifted, and there exists $\mathcal{G} \subseteq\binom{[n]}{4}$ obtained from $\mathcal{F}$ by repeated shifting operations such that $\mathcal{G}$  is neither EKR nor HM, and  $\mathcal{G} \nsubseteq \mathcal{J}_2(n, 4)$, $\mathcal{G}_2(n, 4)$, $\mathcal{G}_3(n, 4)$, $|\mathcal{G}|=|\mathcal{F}|$, but
$s_{i, j}(\mathcal{G})\subseteq \mathcal{G}_3(n, 4)$.

By conditions, there exist  $x_0, x_1, x_2, x_3$ such that
$$
s_{i, j}(\mathcal{G})\subseteq \left\{G \in\binom{[n]}{4}: \{x_1,x_2, x_3\} \subseteq G\right\} \cup\left\{G \in\binom{[n]}{4}: x_0 \in G, G \cap \{x_1,x_2, x_3\} \neq \emptyset\right\}.
$$
Hence, $i=x_0$, or $i\in \{x_1, x_2, x_3\}$ and $j \notin \{x_0, x_1, x_2, x_3\}$.

If $i=x_0$ and $j\in \{x_1, x_2, x_3\}$, let $j=x_1$. Then $\mathcal{G}(\bar{i},\bar{j})= \emptyset$ and $|\mathcal{G}(i,j)|\leq \binom{n-2}{2}$. Observe that $\mathcal{G}(i,\bar{j}), \mathcal{G}(\bar{i},j) \subseteq\left\{A \in\binom{[n]\backslash \{i,j\}}{3}:  A\cap \{x_2, x_3\}\neq \emptyset\right\}$. Let
$
\mathcal{R}=\left\{A \in\binom{[n]\backslash \{i,j\}}{3}: \{x_2,x_3\} \subseteq A\right\}.
$
By the maximality of $|\mathcal{G}|$ and $\mathcal{G} \nsubseteq\mathcal{G}_3(n, 4)$, we may assume that $\mathcal{R}\subsetneq \mathcal{G}(i,\bar{j})$ and $\mathcal{R}\subsetneq \mathcal{G}(\bar{i},j)$. Note that $\mathcal{G}(i,\bar{j})\cap \mathcal{G}(\bar{i},j)\subseteq \mathcal{R}$.
So $(\mathcal{G}(i,\bar{j})\backslash\mathcal{R}) \cap(\mathcal{G}(i,\bar{j})\backslash\mathcal{R}) =\emptyset$.
Since $\mathcal{G}(i,\bar{j})$ and $\mathcal{G}(\bar{i},j)$ are 
cross-intersecting and $n-2\geq 7$, by Lemma \ref{Lem52}, we obtain
\begin{align*}
|\mathcal{F}|&=|\mathcal{G}(i,j)|+|\mathcal{G}(i,\bar{j})|+|\mathcal{G}(\bar{i}, j)| \\ 
& \leq \binom{n-2}{2}+ \binom{n-2}{3}-\binom{n-5}{3}-\binom{n-6}{2}-\binom{n-7}{1}+2\\
&<\binom{n-1}{3}-\binom{n-5}{3}-\binom{n-6}{2}-\binom{n-7}{1}+3.
\end{align*}

If $i=x_0$ and $j\notin \{x_1, x_2, x_3\}$, then $|\mathcal{G}(i,j)|\leq 3n-12$ and  $|\mathcal{G}(\bar{i},\bar{j})|\leq n-5$. Observe that  $\mathcal{G}(i,\bar{j}), \mathcal{G}(\bar{i},j) \subseteq\left\{A \in\binom{[n]\backslash \{i,j\}}{3}:  A\cap \{x_1, x_2, x_3\}\neq \emptyset\right\}$ and $\mathcal{G}(i,\bar{j})\cap \mathcal{G}(\bar{i},j)\subseteq \{x_1,x_2,x_3\}$. Since $\mathcal{G}$ is not a subfamily of $\mathcal{G}_3(n, 4)$, we have that $\mathcal{G}(i,\bar{j})$ and
$\mathcal{G}(\bar{i},j)$ are non-empty, $\mathcal{G}(i,\bar{j})\neq \{x_1, x_2, x_3\}$ and $\mathcal{G}(\bar{i},j)\neq  \{x_1, x_2, x_3\}$.
 By the maximality of $|\mathcal{G}|$, we may assume that $\mathcal{G}(i,\bar{j})\cap \mathcal{G}(\bar{i},j)=\{x_1, x_2, x_3\}$. So  $|\mathcal{G}(i,\bar{j})|\geq 2$ and $|\mathcal{G}(\bar{i},j)|\geq 2$.
Note that $\mathcal{G}(i,\bar{j})$ and $\mathcal{G}(\bar{i},j)$ are 
cross-intersecting.
 Applying Lemma \ref{W231} to $\mathcal{G}(i,\bar{j})$ and $\mathcal{G}(\bar{i},j)$ yields
\begin{align*}
|\mathcal{F}|&=|\mathcal{G}(i,j)|+|\mathcal{G}(\bar{i},\bar{j})|+|\mathcal{G}(i,\bar{j})|+|\mathcal{G}(\bar{i}, j)|\\
&\leq 3n-12+n-5+\binom{n-2}{3}- \binom{n-5}{3}-\binom{n-6}{2}+1\\
&=\binom{n-2}{3}- \binom{n-5}{3}-\binom{n-6}{2}+4n-16 \\
&< \binom{n-1}{3}-\binom{n-5}{3}-\binom{n-6}{2}-\binom{n-7}{1}+3, 
\end{align*}
where the last inequality holds by direct computation.

If $i\in \{x_1, x_2, x_3\}$ and $j \notin \{x_0, x_1, x_2, x_3\}$, we may assume that $x_1=i$. Then $\mathcal{G}(i,j)\subseteq\{x_2,x_3\}\cup\{ \{x_0,a\}:a\in [n]\backslash\{i, j, x_0\}\}$ and
$
\mathcal{G}(\bar{i},\bar{j}) \subseteq\left\{G \in\binom{[n]\backslash \{i,j\}}{4}: x_0\in G, G\cap\{x_2,x_3\} \neq \emptyset\right\}
$.
So $|\mathcal{G}(i,j)|\leq n-2$ and  $|\mathcal{G}(\bar{i},\bar{j})|\leq \binom{n-3}{3}-\binom{n-5}{3}$.
Let
\begin{align*}
\mathcal{S}=\left\{S \in\binom{[n]\backslash \{i,j\}}{3}: x_0\in S, S\cap\{x_2,x_3\} \neq \emptyset\right\},~\mathcal{T}=\left\{T \in\binom{[n]\backslash \{i,j\}}{3}: x_0\in T\right\}.
\end{align*}
Observe that
$\mathcal{G}(i,\bar{j}), \mathcal{G}(\bar{i},j) \subseteq \mathcal{T}\cup \{\{x_2, x_3, a\}:a\in [n]\backslash\{i, j, x_2,x_3,x_0\}\}$.
Moreover, we have $\mathcal{G}(i,\bar{j})\cap \mathcal{G}(\bar{i},j)\subseteq \mathcal{S}$. By the maximality of $|\mathcal{G}|$ and $\mathcal{G} \nsubseteq\mathcal{G}_3(n, 4)$, we may assume that $\mathcal{S}\subsetneq \mathcal{G}(i,\bar{j})$ and 
$\mathcal{S}\subsetneq \mathcal{G}(\bar{i},j)$. 
Note that $(\mathcal{G}(i,\bar{j})\backslash\mathcal{S}) \cap(\mathcal{G}(i,\bar{j})\backslash\mathcal{S}) =\emptyset$, and both 
$\mathcal{G}(i,\bar{j})\backslash\mathcal{S} $ and $ \mathcal{G}(\bar{i},j) \backslash\mathcal{S}$ are subfamilies of 
$$
\left\{A \in\binom{[n]\backslash \{i,j\}}{3}: x_0\in A, A\cap\{x_2,x_3\} 
= \emptyset\right\}
\cup \{\{x_2, x_3, a\}:a\in [n]\backslash\{i, j, x_2,x_3,x_0\}\}
.$$
In adition,  $\mathcal{G}(i,\bar{j})\backslash\mathcal{S}$ and $\mathcal{G}(\bar{i},j)\backslash\mathcal{S}$ are cross-intersecting.

Next we consider the following four cases, and we claim that in all these cases, the inequality
$|\mathcal{G}(i,\bar{j})\backslash\mathcal{S}|+|\mathcal{G}(\bar{i},j)\backslash\mathcal{S}|\leq \binom{n-5}{2}+2$
holds for $n\geq 9$. Once we have established this inequality. Then
\begin{align*}
|\mathcal{F}|&=|\mathcal{G}(i,j)|+2|\mathcal{S}|+|\mathcal{G}(i,\bar{j})\backslash\mathcal{S}|+|\mathcal{G}(\bar{i},j)\backslash\mathcal{S}|+|\mathcal{G}(\bar{i}, j)|\\
&\leq n-2+2\left(\binom{n-3}{2}- \binom{n-5}{2}\right)+\binom{n-5}{2}+2+ \binom{n-3}{3}-\binom{n-5}{3}\\
&=\binom{n-1}{3}-\binom{n-5}{3}-\binom{n-6}{2}-\binom{n-7}{1}+2\\
&<\binom{n-1}{3}-\binom{n-5}{3}-\binom{n-6}{2}-\binom{n-7}{1}+3,
\end{align*}
proving the Case 2. Let us finish the proof by proving the claim.

(i) If there exist $\{x_2, x_3, u\}, \{x_2, x_3, v\}, \{x_2, x_3, w\}
\in \mathcal{G}(i,\bar{j})\backslash\mathcal{S}$ for three different elements $u, v, w\in [n]\backslash\{i, j, x_2,x_3,x_0\}$, then
$
\mathcal{G}(\bar{i},j) \backslash\mathcal{S}\subseteq\{\{x_2, x_3, a\}:a\in [n]\backslash\{i, j, x_2,x_3,x_0, u, v, w\}\}.
$
So $|\mathcal{G}(\bar{i},j)\backslash\mathcal{S}|\leq n-8$.

(ii) If there exist $\{x_2, x_3, u\}, \{x_2, x_3, v\}
\in \mathcal{G}(i,\bar{j})\backslash\mathcal{S}$ for $u\neq v\in [n]\backslash\{i, j, x_2,x_3,x_0\}$, then
$
\mathcal{G}(\bar{i},j) \backslash\mathcal{S}\subseteq\{\{x_2, x_3, a\}:a\in [n]\backslash\{i, j, x_2,x_3,x_0, u, v\}\}\cup \{x_0, u, v\}.
$
So $|\mathcal{G}(\bar{i},j)\backslash\mathcal{S}|\leq n-6$.

(iii) If there exists $\{x_2, x_3, u\}\in \mathcal{G}(i,\bar{j})\backslash\mathcal{S}$ for $u\in [n]\backslash\{i, j, x_2,x_3,x_0\}$, then
$
\mathcal{G}(\bar{i},j) \backslash\mathcal{S}$ $\subseteq$ $\{\{x_2, x_3, $ $a\} : a\in [n]\backslash\{i, j, x_2,x_3,x_0, u\}\}\cup\{ \{x_0, u, a\}: a\in [n]\backslash\{i, j, x_2,x_3,x_0, u\}\}.
$
So $|\mathcal{G}(\bar{i},j)\backslash\mathcal{S}|\leq 2n-12$.

(iv) If there is no such $\{x_2, x_3, u\}\in \mathcal{G}(i,\bar{j})\backslash\mathcal{S}$ for $u\in [n]\backslash\{i, j, x_2,x_3,x_0\}$, then $\mathcal{G}(i,\bar{j}) \backslash\mathcal{S}\neq \emptyset$ implies that there exists $\{x_0, y, z\}\in \mathcal{G}(i,\bar{j}) \backslash\mathcal{S}$ for some $y\neq z \in  [n]\backslash\{i, j, x_2,x_3,x_0\}$. So
$
\mathcal{G}(\bar{i},j) \backslash\mathcal{S}\subseteq\{\{x_2, x_3, a\}:a\in \{y,z\}\}\cup\left(\left\{A \in\binom{[n]\backslash \{i,j\}}{3}: x_0\in A, A\cap\{x_2,x_3\} = \emptyset\right\}\backslash \{x_0, y, z\}\right).
$
Therefore, we have $|\mathcal{G}(i,\bar{j})\backslash\mathcal{S}|+|\mathcal{G}(\bar{i},j)\backslash\mathcal{S}|\leq \binom{n-5}{2}+2$.

Returning to case (i), since $\mathcal{G}(\bar{i},j) \backslash\mathcal{S}
\neq \emptyset$, by symmetry and case (iii), we have  $|\mathcal{G}(i,\bar{j})\backslash\mathcal{S}|\leq 2n-12$. Thus $|\mathcal{G}(i,\bar{j})\backslash\mathcal{S}|+|\mathcal{G}(\bar{i},j)\backslash\mathcal{S}|\leq 3n-20<\binom{n-5}{2}+2$.

 For case (ii), we may assume that case (i) is not true.
If there exists $\{x_2, x_3, a\}\in \mathcal{G}(\bar{i},j)\backslash\mathcal{S}$ for some $a\in [n]\backslash\{i, j, x_2,x_3,x_0, u, v\}$, then by symmetry and case (iii), note that case (i) is not true, we have $|\mathcal{G}(i,\bar{j})\backslash\mathcal{S}|\leq 2+n-6=n-4$. Then $|\mathcal{G}(i,\bar{j})\backslash\mathcal{S}|+|\mathcal{G}(\bar{i},j)\backslash\mathcal{S}|\leq 2n-10\leq \binom{n-5}{2}+2$. Otherwise by symmetry and case (iv), we also have $|\mathcal{G}(i,\bar{j})\backslash\mathcal{S}|+|\mathcal{G}(\bar{i},j)\backslash\mathcal{S}|\leq \binom{n-5}{2}+2$.

We have completed the proofs of cases (i), (ii), (iv).
For case (iii), we may assume that cases (i) and (ii) are not true.
Furthermore, by symmetry and cases (i), (ii) (iv), we may assume that there is only one
$\{x_2, x_3, a\}\in \mathcal{G}(\bar{i},j)\backslash\mathcal{S}$ for some $a\in [n]\backslash\{i, j, x_2,x_3,x_0, u\}$.
Then $|\mathcal{G}(\bar{i},j)\backslash\mathcal{S}|\leq 1+n-6=n-5$ and $|\mathcal{G}(i,\bar{j})\backslash\mathcal{S}|\leq n-5$.
Therefore, we obtain $|\mathcal{G}(i,\bar{j})\backslash\mathcal{S}|+|\mathcal{G}(\bar{i},j)\backslash\mathcal{S}|\leq 2n-10\leq \binom{n-5}{2}+2$.
This completes the proof.$\hfill \square$

\end{document}